\newtheorem{prop}{Proposition}[subsection]
\newtheorem{lem}[prop]{Lemma}
\newtheorem{thrm}[prop]{Theorem}
\theoremstyle{definition}
\newtheorem{defn}[prop]{Definition}
\newtheorem{ex}[prop]{Example}
\theoremstyle{remark}
\newtheorem{rmk}[prop]{Remark}
\newcommand{\F}{\mathcal F}
\newcommand{\g}{\mathfrak g}
\newcommand{\wt}[1]{\widetilde #1}
\newcommand{\red}{\twoheadrightarrow}
\newcommand{\cored}{\rightarrowtail}
\DeclareMathOperator{\Man}{\mathbf{Man}}
\DeclareMathOperator{\Symp}{\mathbf{Symp}}
\DeclareMathOperator{\SRel}{\mathbf{SRel}}
\DeclareMathOperator{\dom}{dom}
\title{Double Groupoids and the Symplectic Category}
\author{Santiago Ca\~nez}
\address{Department of Mathematics, Northwestern University, 2033 Sheridan Rd, Evanston IL 60208, USA}
\email{scanez@northwestern.edu}
\date{}
\keywords{double groupoids, canonical relations, symplectic category}
\subjclass[2010]{Primary: 53D12; Secondary: 22A22.}
\begin{document}

\begin{abstract}
We introduce the notion of a \emph{symplectic hopfoid}, which is a ``groupoid-like'' object in the category of symplectic manifolds where morphisms are given by canonical relations. Such groupoid-like objects arise when applying a version of the cotangent functor to the structure maps of a Lie groupoid.  We show that such objects are in one-to-one correspondence with symplectic double groupoids, generalizing a result of Zakrzewski concerning symplectic double groups and Hopf algebra objects in the aforementioned category. The proof relies on the fact that one can realize the core of a symplectic double groupoid as a symplectic quotient of the total space. The resulting constructions apply more generally to give a correspondence between double Lie groupoids and groupoid-like objects in the category of smooth manifolds and smooth relations, and we show that the cotangent functor relates the two constructions.
\end{abstract}

\maketitle\thispagestyle{empty}

\section{Introduction}
The symplectic category is the category-like structure whose objects are symplectic manifolds and morphisms are canonical relations, i.e. Lagrangian submanifolds of products of symplectic manifolds. Although compositions in this ``category'' are only defined between certain morphisms, this concept has nonetheless proved to be useful for describing many constructions in symplectic geometry in a more categorical manner; in particular, symplectic groupoids can be characterized as certain monoid objects in this category, and Hamiltonian actions can be described as actions of such monoids. The importance of representing constructions in symplectic geometry in terms of canonical relations is grounded in the idea that such relations give rise to linear maps after quantization.

Symplectic double groupoids are symplectic manifolds equipped with two compatible symplectic groupoid structures, and first arose in the study of Poisson groupoids and their integration~(\cite{W1},\cite{M}). In particular, they provide a natural setting in which to discuss the duality of Poisson groupoids. Symplectic double groups, i.e. symplectic double groupoids over a point, were shown by Zakrzewski~\cite{SZ1},\cite{SZ2} to be the same as Hopf algebra objects in the symplectic category described above. In this paper, we build on Zakrzewski's results, giving a characterization of general symplectic double groupoids in terms of the symplectic category. The resulting objects are referred to as \emph{symplectic hopfoids}, a term which is meant to be reminiscent of a ``groupoid'' where the coproduct must specified as part of the data. A symplectic hopfoid over a point is in particular a hopf algebra object in the symplectic category.

Characterizing symplectic double groupoids in terms of canonical relations leads to the possiblity of a new approach to quantizing Poisson groupoids. This was carried out to some success in~\cite{SZ} in the special case of Zakrzewski's prior work, and it would be interesting to carry this out in the double groupoid setting using the results of this paper. It would also be interesting to know the connection between our construction and those in~\cite{MT}. We will not address these questions in the present work.

The original motivation for this work was the following. It is well known that to a Lie groupoid $G \rightrightarrows M$ one can apply the tangent functor to all structure maps to obtain the so-called tangent groupoid $TG \rightrightarrows TM$~(\cite{H}). This tangent groupoid has many uses, such as in the construction of tangent stacks. However, since there is no analogous ``cotangent functor'' if we restrict ourselves to the usual category of manifolds and smooth maps, a similar construction cannot be carried out directly in order to produce a cotangent groupoid. This problem is circumvented if we allow general relations as morphisms in the target category of the cotangent functor\footnote{As already mentioned, the resulting structure is not a true category. This will not be an issue in our results, but we mention that there do exist solutions to this problem: work with germs of canonical relations as in~\cite{CDW2}, or work with sequences of canonical relations as in~\cite{W2}. In these settings, the cotangent functor becomes an honest functor.}, and applying this version of the functor to the structure maps of $G \rightrightarrows M$ gives an object we denote by $T^*G \rightrightarrows T^*M$. Weinstein suggested in \cite{W} that such an object should be related to the notion of a cotangent stack, and the present work grew out of a desire to understand the structure encoded by $T^*G \rightrightarrows T^*M$. We will characterize this structure as being that of a symplectic hopfoid. One can attempt to further realize Weinstein's suggestion by restricting the relations appearing in $T^*G \rightrightarrows T^*M$ to the domains on which they give actual funcitons, which in the case where $G \rightrightarrows M$ is an orbifold indeed produces the usual cotangent orbifold; we will leave such considerations to a future paper.

The paper is organized as follows. Section 2 contains background material on smooth relations and the symplectic category. Zakrezwki's prior work is recalled here. Section 3 contains background material on symplectic double groupoids, and derives results which setup their description in terms of the symplectic category. The key result is Theorem~\ref{thrm:core}, which observes that one can obtain the core of a symplectic double groupoid via symplectic reduction. This observation in turn depends on the technical result of Lemma~\ref{lem:fol}, which uses the available double groupoid structure to explicitly describe the characteristic foliations of certain coisotropic submanifolds. For instance, in the case of the standard double groupoid structure on the cotangent bundle $T^*G$ of a Lie groupoid $G \rightrightarrows M$, the coisotropic submanifolds in question are
\[ T^*G|_M,\ N^*\F_s,\ N^*\F_t, \]
where $\F_s$ and $\F_t$ are the foliations of $G$ given by the source and target fibers respectively, whose reductions are indeed symplectomorphic to the core $T^*M$ of the double groupoid $T^*G$. Section 4 introduces the notion of a symplectic hopfoid and shows that symplectic double groupoids produce such structures. The constructions described here apply to double Lie groupoids in general, producing what we call Lie hopfoids, and it is also shown in Theorem~\ref{thrm:lie-hopf} that the cotangent functor makes the diagram
\begin{center}
\begin{tikzpicture}[>=angle 90]
	\node (UL) at (0,1) {double Lie groupoids};
	\node (UR) at (6,1) {symplectic double groupoids};
	\node (LL) at (0,-1) {Lie hopfoids};
	\node (LR) at (6,-1) {symplectic hopfoids};

	\tikzset{font=\scriptsize};
	\draw[->] (UL) to node [above] {$T^*$} (UR);
	\draw[->] (UL) to node [left] {hopf} (LL);
	\draw[->] (UR) to node [right] {hopf} (LR);
	\draw[->] (LL) to node [above] {$T^*$} (LR);
\end{tikzpicture}
\end{center}
commute at the level of objects, where ``hopf'' denotes the map sending double groupoids to hopfoids. The recovery of a double groupoid from a hopfoid is also outlined, and the $1$-to-$1$ correspondence between these structures is summarized in Theorem~\ref{thrm:main-cor}. We finish with a brief discussion of morphisms between hopfoids, but leave proper development of such a concept to later work.

\subsubsection*{Acknowledgements}
I would like to thank Alan Weinstein, Rajan Mehta, and the anonymous referees for helpful comments and suggestions.

\section{Categories of Relations}

\subsection{Smooth Relations}
\begin{defn}
A \emph{smooth relation} $R$ from a manifold $M$ to a manifold $N$ is a closed embedded submanifold of $M \times N$. We will use the notation $R: M \to N$ to mean that $R$ is a smooth relation from $M$ to $N$. We will suggestively use the notation $R: m \mapsto n$ to mean that $(m,n) \in R$. The \emph{transpose} of a smooth relation $R: M \to N$ is the smooth relation $R^t: N \to M$ defined by the condition that $(n,m) \in R^t$ if and only if $(m,n) \in R$.
\end{defn}

We define a composition of smooth relations using the usual composition of relations: given smooth relations $R: M \to N$ and $R': N \to Q$, the composition $R' \circ R: M \to Q$ is
\[
R' \circ R := \{ (m,q) \in M \times Q\ |\ \text{there exists } n \in N \text{ such that } (m,n) \in R \text{ and } (n,q) \in R'\}.
\]
This is the same as taking the intersection of $R \times R'$ and $M \times \Delta_N \times Q$ in $M \times N \times N \times Q$, where $\Delta_N$ denotes the diagonal in $N \times N$, and projecting to $M \times Q$. However, we immediately run into the problem that the above composition need no longer produce a smooth closed submanifold of $M \times Q$. To fix this, we introduce the following notions:

\begin{defn}
A pair $(R,R')$ of smooth relations $R: M \to N$ and $R': N \to Q$ is \emph{transversal} if the submanifolds $R \times R'$ and $M \times \Delta_N \times Q$ intersect transversally in $M \times N \times N \times Q$. The pair $(R,R')$ is \emph{strongly transversal} if it is transversal and in addition the projection of
\[
(R \times R') \cap (M \times \Delta_N \times Q)
\]
to $M \times Q$ is a proper embedding.
\end{defn}

As a consequence, for a strongly transversal pair $(R,R')$, the composition $R' \circ R$ is indeed a smooth relation from $M$ to $Q$.

\begin{defn}
The \emph{domain} of $R: M \to N$ is
\[
\dom R := \{m \in M\ |\ \text{there exists $n \in N$ such that $(m,n) \in R$}\} \subseteq M.
\]
The relation $R: M \to N$ is said to be:
\begin{itemize}
\item \emph{surjective} if for any $n \in N$ there exists $m \in M$ such that $(m,n) \in R$,
\item \emph{injective} if whenever $(m,n), (m',n) \in R$ we have $m=m'$,
\item \emph{cosurjective} if for any $m \in M$ there exists $n \in N$ such that $(m,n) \in R$,
\item \emph{coinjective} if whenever $(m,n), (m,n') \in R$ we have $n=n'$.
\end{itemize}
\end{defn}

Note that $R$ is cosurjective if and only if $R^t$ is surjective and $R$ is coinjective if and only if $R^t$ is injective.

\begin{defn}
A smooth relation $R: M \to N$ is said to be a \emph{surmersion} if it is surjective and coinjective, the projection of $R$ to $M$ is a proper embedding, and the projection of $R$ to $N$ is a submersion; it is a \emph{cosurmersion} if $R^t: N \to M$ is a surmersion.
\end{defn}

It is straightforward to check that $R$ is a surmersion if and only if $R \circ R^t = id$, and hence a cosurmersion if and only if $R^t \circ R = id$. It is also true that a pair $(R,R')$ is always strongly transversal if either $R$ is a surmersion or $R'$ a cosurmersion, see ~\cite{W2}.

\subsection{Canonical Relations}
\begin{defn}
A \emph{canonical relation} $L: P \to Q$ from a symplectic manifold $P$ to a symplectic manifold $Q$ is a smooth relation which is Lagrangian as a submanifold of $\overline{P} \times Q$, where $\overline P$ denotes $P$ with the symplectic form multiplied by $-1$.
\end{defn}

\begin{ex}
The graph of a symplectomorphism $f: P \to Q$ is a canonical relation $P \to Q$, which by abuse of notation we will also denote by $f$. In particular, given any symplectic manifold $P$, the graph of the identity map is the canonical relation $id: P \to P$ given by the diagonal in $\overline P \times P$. More generally, the graph of a symplectic \'etale map is a canonical relation.
\end{ex}

\begin{ex}
For any manifold $M$, the \emph{Schwartz transform} on $T^*M$ is the canonical relation
\[
s: T^*M \to \overline{T^*M},\ (p,\xi) \mapsto (p,-\xi)
\]
given by multiplication by $-1$ in the fibers. Alternatively, it is the Lagrangian submanifold of $T^*M \times T^*M \cong T^*(M \times M)$ given by the conormal bundle to the diagonal of $M \times M$.
\end{ex}

\begin{ex}
For any symplectic manifold $S$, a canonical relation $pt \to S$ or $S \to pt$ is nothing but a closed Lagrangian submanifold of $S$.
\end{ex}

One can check that for a canonical relation $L P \to Q$, if the projection from $L$ to $P$ is of constant rank, then $\dom L$ is a coisotropic submanifold of $P$.

Here is a basic fact:

\begin{prop}
If $L: X \to Y$ and $L': Y \to Z$ are canonical relations with $(L,L')$ strongly transversal, then $L' \circ L: X \to Z$ is a canonical relation.
\end{prop}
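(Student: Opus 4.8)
The plan is to split the claim into three parts: (i) $L'\circ L$ is a closed embedded submanifold of $\overline X\times Z$; (ii) it has the right dimension; (iii) it is isotropic. Part (i) is immediate from the hypothesis: since $(L,L')$ is strongly transversal, the discussion following the definition of strong transversality already tells us that $L'\circ L$ is a smooth relation $X\to Z$, i.e.\ a closed embedded submanifold of $X\times Z$, and this is the same underlying subset when $X\times Z$ is given the form of $\overline X\times Z$. So everything reduces to showing $L'\circ L$ is Lagrangian in $\overline X\times Z$, which by definition means isotropic of half dimension.

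For (ii), recall $\dim L=\tfrac12(\dim X+\dim Y)$ and $\dim L'=\tfrac12(\dim Y+\dim Z)$ because these submanifolds are Lagrangian, so $\dim(L\times L')=\tfrac12\dim X+\dim Y+\tfrac12\dim Z$. Transversality of $L\times L'$ with $X\times\Delta_Y\times Z$ inside $X\times Y\times Y\times Z$ then gives
\[
\dim\bigl((L\times L')\cap(X\times\Delta_Y\times Z)\bigr)=\dim(L\times L')+(\dim X+\dim Y+\dim Z)-(\dim X+2\dim Y+\dim Z)=\tfrac12(\dim X+\dim Z).
\]
Since this intersection maps onto $L'\circ L\subseteq X\times Z$ by a (proper) embedding, $\dim(L'\circ L)=\tfrac12(\dim X+\dim Z)=\tfrac12\dim(\overline X\times Z)$, as required.

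For (iii), I would argue by symplectic reduction. View $L\times L'$ as a Lagrangian submanifold of $\overline X\times Y\times\overline Y\times Z$, which is legitimate since $L$ is Lagrangian in $\overline X\times Y$, $L'$ is Lagrangian in $\overline Y\times Z$, and a product of Lagrangians is Lagrangian. The submanifold $C:=X\times\Delta_Y\times Z$ is coisotropic there: $X\subseteq\overline X$ and $Z\subseteq Z$ are trivially coisotropic, $\Delta_Y$ is Lagrangian (hence coisotropic) in $Y\times\overline Y$, and a product of coisotropic submanifolds is coisotropic. The characteristic foliation of $C$ is exactly the $\Delta_Y$-direction (the symplectic factors $\overline X$ and $Z$ contribute nothing), so the reduced space is canonically $C_{\mathrm{red}}=\overline X\times Z$ with reduction map $(x,y,y,z)\mapsto(x,z)$. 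Transversality of $L\times L'$ with $C$ makes $(L\times L')\cap C$ a clean intersection, so by the standard fact that the image of a Lagrangian under reduction along a clean intersection with a coisotropic submanifold is isotropic (indeed Lagrangian), the image of $(L\times L')\cap C$ in $C_{\mathrm{red}}=\overline X\times Z$ is isotropic; but that image is precisely $L'\circ L$. Together with (i) and (ii), $L'\circ L$ is a closed embedded isotropic submanifold of $\overline X\times Z$ of half dimension, hence Lagrangian, i.e.\ a canonical relation $X\to Z$.

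The main obstacle is step (iii): one must either invoke (or prove) the reduction-of-Lagrangians lemma and confirm that plain transversality of $L\times L'$ with $C$ yields a sufficiently clean intersection to apply it, or instead carry out the equivalent computation directly on tangent spaces — choosing $(x,z)\in L'\circ L$ with a witness $y$, lifting an arbitrary pair of tangent vectors to $L'\circ L$ through the transversal intersection to tangent vectors of $L$ and of $L'$, and checking that they pair to zero under $-\omega_X\oplus\omega_Z$ by using that $L$ and $L'$ are themselves isotropic while the $Y$-contributions cancel because the two relations meet along $\Delta_Y$. The remaining points — smoothness, closedness, and the dimension count — are routine given the hypotheses and the material already developed.
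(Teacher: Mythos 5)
Your proof is correct. The paper itself offers no argument here: the proposition is stated as a ``basic fact'' (it is the classical Weinstein--Guillemin--Sternberg statement that composition of Lagrangian relations under transversality stays Lagrangian), so there is no internal proof to compare against. Your three-step structure is exactly the standard argument: smoothness and closedness are precisely what the paper's definition of strong transversality buys you; your dimension count is right; and the isotropy step via viewing $L\times L'$ as Lagrangian in $\overline X\times Y\times\overline Y\times Z$ and reducing along the coisotropic $X\times\Delta_Y\times Z$ is the usual reduction argument, with the transversal (hence clean) intersection hypothesis justifying it. Equivalently, as you note, one can avoid quoting the reduction-of-Lagrangians lemma by the direct tangent-space computation: lift $(u,w),(u',w')\in T(L'\circ L)$ through the embedded intersection to $(u,v,v,w)$ and $(u',v',v',w')$ with $(u,v),(u',v')\in TL$ and $(v,w),(v',w')\in TL'$, and add the two Lagrangian identities $-\omega_X(u,u')+\omega_Y(v,v')=0$ and $-\omega_Y(v,v')+\omega_Z(w,w')=0$; the lifting is legitimate because the projection of the intersection onto $L'\circ L$ is an embedding. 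One pedantic remark: the leaf space of $X\times\Delta_Y\times Z$ is literally $\overline X\times Z$ only when $Y$ is connected, but this is immaterial for your argument, since isotropy of the image only uses that the projection $(x,y,y,z)\mapsto(x,z)$ pulls $\omega_{\overline X\times Z}$ back to the restriction of the ambient symplectic form, not any identification of the leaf space itself.
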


In other words, the only obstacle to the composition of canonical relations being well-defined comes from smoothness concerns and not from the requirement that the resulting submanifold be Lagrangian.

\begin{rmk}
The composition of canonical relations is well-defined under weaker assumptions than strong transversality; in particular, it is well-defined under a \emph{clean intersection} hypothesis. We will not need this general notion.
\end{rmk}

\begin{defn}
A canonical relation $L: X \to Y$ is said to be a \emph{reduction} if, as a smooth relation, it is a surmersion; it is a \emph{coreduction} if it is a cosurmersion. We use $L: X \red Y$ to denote that $L$ is a reduction, and $L: X \cored Y$ to denote that $L$ is a coreduction.
\end{defn}

\begin{rmk}
A detailed study of some of the notions introduced above can be found in~\cite{LW} and~\cite{W3}
\end{rmk}

The use of the term ``reduction'' is motivated by the following example.

\begin{ex}(Symplectic Reduction)
Let $(M,\omega)$ be a symplectic manifold and $C$ a coisotropic submanifold. The distribution on $C$ given by $\ker \omega|_C \subset TC$, called the \emph{characteristic distribution} of $C$, is integrable and the induced foliation $C^\perp$ on $C$ is called the \emph{characteristic foliation} of $C$. If the leaf space $C/C^\perp$ has a smooth structure for which the projection $C \to C/C^\perp$ is a submersion, then $C/C^\perp$ naturally carries a symplectic structure and the relation
\[
red: M \red C/C^\perp
\]
assigning to an element of $C$ the leaf which contains it is a canonical relation which is a reduction in the sense above. The construction of $C/C^\perp$ from $M$ and $C$ is called \emph{symplectic reduction}. Symplectic reduction via Hamiltonian actions of Lie groups is a special case.
\end{ex}

\begin{ex}\label{ex:leaf}
We also note two more well-known examples of symplectic reduction. Suppose that $X$ is a manifold with $Y \subseteq X$ a submanifold. Then the restricted cotangent bundle $T^*X|_Y$ is a coisotropic submanifold of $T^*X$ whose reduction is symplectomorphic to $T^*Y$. Thus we obtain a reduction $T^*X \twoheadrightarrow T^*Y$.

Suppose now that $\F$ is a regular foliation on $X$ with smooth, Hausdorff leaf space $X/\F$. Then the conormal bundle $N^*\F$ is a coisotropic submanifold of $T^*X$ (this is in fact equivalent to the distribution $T\F$ being integrable) and its reduction is canonically symplectomorphic to $T^*(X/\F)$, giving rise to a reduction relation $T^*X \twoheadrightarrow T^*(X/\F)$.
\end{ex}

\subsection{The Symplectic Category}
We are now ready to introduce the category we will be working in, which we call a category even though, as previously mentioned, compositions are not always defined.

\begin{defn}
The \emph{symplectic category} is the category $\Symp$ whose objects are symplectic manifolds and whose morphisms are canonical relations.
\end{defn}

The category $\Symp$ thus defined has additional rich structure. In particular, it is a \emph{monoidal category}, where the tensor operation is given by Cartesian product and the unit is given by the symplectic manifold consisting of a single point $pt$. Moreover, $\Symp$ is \emph{symmetric monoidal} and \emph{rigid}, where the dualizing operation is given by $X \mapsto \overline X$ on objects and $L \mapsto L^t$ on morphisms. In addition, if we allow the empty set as a symplectic manifold, then it is simple to check that $\emptyset$ is both an initial and terminal object in this category, and that the categorical product of symplectic manifolds $X$ and $Y$ is the disjoint union $X \sqcup Y$.

\begin{rmk}
The same construction makes sense for general smooth relations between smooth manifolds; we denote the resulting category by $\SRel$ and call it the \emph{category of smooth relations}.
\end{rmk}

\subsection{The Cotangent Functor}

We define a functor $T^*: \Man \to \Symp$, called the \emph{cotangent functor}, as follows. First, $T^*$ assigns to a smooth manifold its cotangent bundle. To a smooth map $f: M \to N$, $T^*$ assigns the canonical relation $T^*f: T^*M \to T^*N$ given by
\[
T^*f: (p,(df)_p^*\xi) \mapsto (f(p),\xi).
\]
This is nothing but the composition $T^*M \to \overline{T^*M} \to T^*N$ of the Schwartz transform of $T^*M$ followed by the canonical relation given by the conormal bundle to the graph of $f$ in $M \times N$. We call $T^*f$ the \emph{cotangent lift} of $f$. It is a simple check to see that pairs of cotangent lifts are always strongly transversal and that $T^*$ really is then a functor: i.e. $T^*(f \circ g) = T^*f \circ T^*g$ and $T^*(id) = id$. Note that the same construction makes sense even when $f$ is only a smooth relation, giving a functor $T^*: \SRel \to \Symp$. 

\begin{ex}
When $\phi: M \to N$ is a diffeomorphism, $T^*\phi: T^*M \to T^*N$ is the graph of the usual lifted symplectomorphism.
\end{ex}

The following is easy to verify.

\begin{prop}
The cotangent lift of $f$ is a reduction if and only if $f$ is a surmersion; the cotangent lift of $g$ is a coreduction if and only if $g$ is cosurmersion.
\end{prop}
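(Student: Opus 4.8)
The plan is to deduce the statement from the algebraic characterizations already recorded. Recall that a canonical relation $R$ is a reduction precisely when it is a surmersion, equivalently when $R \circ R^t = id$, and a coreduction precisely when $R^t \circ R = id$; similarly a smooth relation $f$ is a surmersion precisely when $f \circ f^t = id$ and a cosurmersion precisely when $f^t \circ f = id$. So it is enough to know that the cotangent functor intertwines transposition, namely
\[ (T^*f)^t = T^*(f^t) \]
for every smooth relation $f$, and that it is functorial and faithful; the $\circ$-and-transpose identities for $T^*f$ then translate into the corresponding ones for $f$.

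First I would prove the transposition identity by unwinding both sides through the description $T^*f = N^*(\mathrm{graph}\,f) \circ s$, with $s$ the Schwartz transform. Since $\mathrm{graph}(f^t) \subseteq N \times M$ is the swap of $\mathrm{graph}(f) \subseteq M \times N$, its conormal spaces are the swaps of those of $\mathrm{graph}(f)$; carrying this through, both $(T^*f)^t$ and $T^*(f^t)$ turn out to consist of exactly the pairs $\bigl((n,\beta),(p,\zeta)\bigr)$ with $(p,n) \in f$ and $\zeta(v) = \beta(w)$ for all $(v,w) \in T_{(p,n)}f$, the Schwartz twist on the $T^*M$ side and the one on the $T^*N$ side being precisely what is needed to reconcile the two sign conventions. (As a check, for a diffeomorphism $\phi$ this specializes to $(T^*\phi)^t = T^*(\phi^{-1})$.) Faithfulness I would read off from the fact that $f = \{(p,n) : ((p,0),(n,0)) \in T^*f\}$, since the zero covector annihilates every subspace.

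Now to assemble the argument for the reduction half. Because $(T^*f)^t = T^*(f^t)$, the pair $\bigl((T^*f)^t,\,T^*f\bigr)$ is a pair of cotangent lifts and hence strongly transversal, so $T^*f \circ (T^*f)^t$ is a well-defined canonical relation, and $T^*f$ is a reduction iff $T^*f \circ (T^*f)^t = id_{T^*N}$. If $f$ is a surmersion, then $f \circ f^t = id_N$, and functoriality together with the transposition identity gives
\[ T^*f \circ (T^*f)^t = T^*f \circ T^*(f^t) = T^*(f \circ f^t) = T^*(id_N) = id_{T^*N}, \]
so $T^*f$ is a reduction. Conversely, if $T^*f$ is a reduction, then $T^*f \circ (T^*f)^t = id_{T^*N}$; intersecting both sides with the zero section over $N \times N$ and using the explicit form of a cotangent lift, the left side cuts down to $\{(n,n') : (p,n),(p,n') \in f \text{ for some } p\} = f \circ f^t$ and the right side to $\Delta_N$, so $f \circ f^t = id_N$ and $f$ is a surmersion. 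The coreduction statement is now formal: $g$ is a cosurmersion iff $g^t$ is a surmersion iff $T^*(g^t) = (T^*g)^t$ is a reduction iff $T^*g$ is a coreduction.

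I expect the transposition identity $(T^*f)^t = T^*(f^t)$ to be the only real obstacle: done carelessly the computation appears to leave behind an extra Schwartz transform, and one must be scrupulous about the sign conventions in the conormal-bundle description to see that the reflections on the two cotangent-bundle factors match up. The rest is bookkeeping with facts already in hand.
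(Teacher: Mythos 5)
The weak point is your reduction of the statement to the identity criterion ``$R$ is a surmersion iff $R\circ R^t = id$.'' That criterion cannot carry the weight you put on it, because the way you use it only tracks set-theoretic information, while being a surmersion also involves smooth conditions (projection to the source a proper embedding, projection to the target a submersion). In your converse direction you extract from $T^*f\circ (T^*f)^t = id_{T^*N}$ only the set-theoretic identity $f\circ f^t = id_N$ and then declare $f$ a surmersion; but this last inference is false in general: the closed embedded submanifold $f=\{(y^3,y)\}\subset \R\times\R$ satisfies $f\circ f^t=\Delta_N$ set-theoretically, yet is not a surmersion, since its projection to the source fails to be an immersion at the origin. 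So the hypothesis that $T^*f$ is a reduction is being used only through a consequence too weak to recover the embedding/submersion conditions, and the same objection applies to the last step of your forward direction, where you pass from the identity $T^*f\circ(T^*f)^t=id_{T^*N}$ to ``$T^*f$ is a reduction.''

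There is a second, related problem: you justify forming $T^*f\circ (T^*f)^t$ by saying a pair of cotangent lifts is always strongly transversal. The paper's statement to that effect concerns lifts of smooth \emph{maps}; your pair involves $T^*(f^t)$ with $f^t$ a genuine relation, and the claim is false there. Indeed, for the surmersion $f: M\to pt$ the lift $T^*f$ is the reduction sending the zero section to a point, and the pair $\bigl((T^*f)^t,T^*f\bigr)$ is not even transversal (the relevant intersection is clean but not transverse, since $\dom T^*f$ is a proper closed submanifold); this also shows the identity criterion itself cannot be meant in the strongly transversal sense, which undercuts your framework. Likewise $T^*(f\circ f^t)=T^*f\circ T^*(f^t)$ is not an instance of any functoriality established in the paper: for general smooth relations even the set-theoretic equality fails (the cubic relation above gives a strict inclusion), so this step would need its own proof under the surmersion hypothesis. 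Your transposition identity $(T^*f)^t=T^*(f^t)$ is correct, and it does reduce the coreduction half to the reduction half; but the rest should be replaced by the pedestrian verification the paper has in mind: write $T^*f=\{((p,\xi),(n,\eta)) : (p,n)\in f,\ \xi(v)=\eta(w)\ \text{for all } (v,w)\in T_{(p,n)}f\}$ and check each condition in the definition of surmersion for $T^*f$ directly against the corresponding condition for $f$.
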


\subsection{Symplectic Monoids and Groupoids}
Since $\Symp$ is monoidal, we can speak about \emph{monoid objects} in $\Symp$:

\begin{defn}
A \emph{symplectic monoid} is a monoid object in $\Symp$. Thus, a symplectic monoid is a triple $(S,m,e)$ consisting of a symplectic manifold $S$ together with canonical relations
\[
m: S \times S \to S \text{ and } e: pt \to S,
\]
called the \emph{product} and \emph{unit} respectively, so that
\begin{center}
\begin{tikzpicture}[>=angle 90]
	\node (UL) at (0,1) {$S \times S \times S$};
	\node (UR) at (3,1) {$S \times S$};
	\node (LL) at (0,-1) {$S \times S$};
	\node (LR) at (3,-1) {$S$};
	
	\tikzset{font=\scriptsize};
	\draw[->] (UL) to node [above] {$id \times m$} (UR);
	\draw[->] (UL) to node [left] {$m \times id$} (LL);
	\draw[->] (UR) to node [right] {$m$} (LR);
	\draw[->] (LL) to node [above] {$m$} (LR);
\end{tikzpicture}
\end{center}
and
\begin{center}
\begin{tikzpicture}[thick]
	\node (UL) at (0,1) {$S$};
	\node (UR) at (3,1) {$S \times S$};
	\node (URR) at (6,1) {$S$};
	\node (LR) at (3,-1) {$S$};

	\tikzset{font=\scriptsize};	
	\draw[->] (UL) to node [above] {$e \times id$} (UR);
	\draw[->] (UR) to node [right] {$m$} (LR);
	\draw[->] (URR) to node [above] {$id \times e$} (UR);
	\draw[->] (UL) to node [above] {$id$} (LR);
	\draw[->] (URR) to node [above] {$id$} (LR);
\end{tikzpicture}
\end{center}
commute. We also require that all compositions involved be strongly transversal. We often refer to $S$ itself as a symplectic monoid, and use subscripts in the notation for the structure morphisms if we need to be explicit.
\end{defn}

\begin{ex}
Recall that a symplectic groupoid $S \rightrightarrows P$ is a Lie groupoid where $S$ is equipped with a symplectic form $\omega$ such that $m^*\omega = pr_1^*\omega+pr_2^*\omega$, where $pr_1,pr_2: S \times_P S \to S$ are the two projections and $m$ is the groupoid multiplication. This requirement is equivalent to the claim that the graph of $m$ in $\overline S \times \overline S \times S$ be Lagrangian. The image of the unit embedding $P \to S$ of any symplectic groupoid is a Lagrangian submanifold of $S$, so $S$ together with $m$ and the canonical relation $pt \to S$ given by this image is a symplectic monoid.
\end{ex}

Zakrzewski gave in \cite{SZ1}, \cite{SZ2} a complete characterization of symplectic groupoids in terms of such structures, or more specifically, symplectic monoids equipped with a $*$-structure:

\begin{defn}
A \emph{*-structure} on a symplectic monoid $S$ is an anti-symplectomorphism $s: S \to S$ (equivalently a symplectomorphism $s: \overline S \to S$) such that $s^2 = id$ and the diagram
\begin{center}
\begin{tikzpicture}[>=angle 90]
	\node (UL) at (0,1) {$\overline{S} \times \overline{S}$};
	\node (U) at (3,1) {$\overline{S} \times \overline{S}$};
	\node (UR) at (6,1) {$S \times S$};
	\node (LL) at (0,-1) {$\overline{S}$};
	\node (LR) at (6,-1) {$S,$};

	\tikzset{font=\scriptsize};
	\draw[->] (UL) to node [above] {$\sigma$} (U);
	\draw[->] (U) to node [above] {$s \times s$} (UR);
	\draw[->] (UL) to node [left] {$m$} (LL);
	\draw[->] (UR) to node [right] {$m$} (LR);
	\draw[->] (LL) to node [above] {$s$} (LR);
\end{tikzpicture}
\end{center}
where $\sigma$ is the symplectomorphism exchanging components, commutes. A symplectic monoid equipped with a $*$-structure will be called a \emph{symplectic $*$-monoid}.

A $*$-structure $s$ is said to be \emph{strongly positive} if the diagram
\begin{center}
\begin{tikzpicture}[>=angle 90]
	\node (UL) at (0,1) {$S \times \overline{S}$};
	\node (UR) at (3,1) {$S \times S$};
	\node (LL) at (0,-1) {$pt$};
	\node (LR) at (3,-1) {$S,$};

	\tikzset{font=\scriptsize};
	\draw[->] (UL) to node [above] {$id \times s$} (UR);
	\draw[->] (LL) to node [left] {$$} (UL);
	\draw[->] (UR) to node [right] {$m$} (LR);
	\draw[->] (LL) to node [above] {$e$} (LR);
\end{tikzpicture}
\end{center}
where $pt \to S \times \overline{S}$ is the morphism given by the diagonal of $S \times \overline{S}$, commutes.
\end{defn}

\begin{thrm}[Zakrzewski, \cite{SZ1}\cite{SZ2}]
Symplectic groupoids are in $1$-$1$ correspondence with strongly positive symplectic $*$-monoids.
\end{thrm}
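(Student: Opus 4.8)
The plan is to extract, from a symplectic groupoid $S \rightrightarrows P$, the data of a strongly positive symplectic $*$-monoid, and conversely to reconstruct the groupoid from such data, checking that the two procedures are mutually inverse. In the forward direction, the product $m$ and unit $e$ are already in hand from the example preceding the statement: $m$ is the graph of groupoid multiplication (Lagrangian by the compatibility condition $m^*\omega = pr_1^*\omega + pr_2^*\omega$), and $e: pt \to S$ is the Lagrangian submanifold given by the image of the unit embedding $P \hookrightarrow S$. Associativity of the canonical relation $m$ follows from associativity of groupoid multiplication together with a check of strong transversality (the relevant intersections are cut out by the fiber-product conditions, which are clean because the source and target maps are submersions), and the unit axioms likewise descend from the groupoid identities. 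For the $*$-structure I would take $s: S \to S$ to be the graph of the groupoid inversion $\iota$; since inversion on a symplectic groupoid is an anti-symplectomorphism and $\iota^2 = \mathrm{id}$, this gives an involutive anti-symplectomorphism, and the commuting square in the definition of a $*$-structure is precisely the identity $(gh)^{-1} = h^{-1}g^{-1}$ expressed at the level of relations. Strong positivity is the content of the diagram asserting that $m \circ (\mathrm{id}\times s)$ applied to the diagonal of $S \times \overline S$ equals $e$: unwinding, this says that the set of pairs $(g, g)$ composes under $(g,h)\mapsto (g, h^{-1}) \mapsto gh^{-1}$ to the units, i.e. $gg^{-1} = $ unit, which is exactly a groupoid axiom; the word ``positive'' reflects that we use the diagonal of $S\times\overline S$ (a genuine Lagrangian) rather than some twisted version.

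For the converse, suppose $(S, m, e, s)$ is a strongly positive symplectic $*$-monoid. The first step is to recover the base $P$ and the source/target maps. The idea is that the unit $e: pt \to S$ is a Lagrangian submanifold $E \subseteq S$, and one expects $E$ to be (the image of) $P$; the source and target maps should be recovered by ``multiplying on the right by $g^{-1}$'' and ``multiplying on the left by $g^{-1}$'', that is, from the composites $S \ni g \mapsto gg^{-1}$ and $g \mapsto g^{-1}g$, which in relational terms are built from $m$, $s$, and the diagonal. Strong positivity guarantees these composites land in $E$, and one-sidedness of the unit axioms guarantees they are in fact honest maps $S \to E$, not merely relations. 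Then one defines $G := E$ with $\mathbf{s}, \mathbf{t}: S \to G$ these two maps, shows $S \times_G S \subseteq S \times S$ is exactly the domain on which the relation $m$ is single-valued, and defines groupoid multiplication as the resulting map; the groupoid axioms are then read off from the monoid and $*$-monoid diagrams. The symplectic compatibility $m^*\omega = pr_1^*\omega + pr_2^*\omega$ is equivalent to the graph of multiplication being Lagrangian, which it is since $m$ was assumed a canonical relation. Finally one checks that starting from a symplectic groupoid, forming the $*$-monoid, and running this reconstruction returns the original groupoid (and vice versa), which is a matter of chasing the definitions.

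The main obstacle, and the place requiring genuine care rather than formal diagram-chasing, is showing that the reconstructed ``source'' and ``target'' relations $g \mapsto gg^{-1}$, $g \mapsto g^{-1}g$ are smooth maps onto a submanifold $E$ that genuinely serves as a unit space — in particular that $E$ inherits a manifold structure making $\mathbf{s}$ and $\mathbf{t}$ submersions, so that the fiber product $S\times_G S$ is smooth and the reconstructed multiplication is a smooth map whose graph is the original Lagrangian $m$. This is where the hypotheses have to be used in full strength: coinjectivity/single-valuedness forced by the unit axioms, the constant-rank consequences of $e$ being Lagrangian, and strong transversality of all the composites in sight. Zakrzewski's argument essentially reverse-engineers the groupoid structure maps one at a time from the monoid operations, and the bookkeeping — matching each groupoid axiom to the corresponding commuting square and verifying the smoothness/transversality side conditions at each stage — is the substance of the proof.
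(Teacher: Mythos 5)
This theorem is not proved in the paper at all: it is quoted from Zakrzewski's papers \cite{SZ1}, \cite{SZ2}, and the only internal commentary is the remark following the statement, namely that the unit Lagrangian $P \subseteq S$ is the base, the $*$-structure is the groupoid inverse, and the source and target are ``extracted from the requirement that $*$ be strongly positive.'' So there is no in-paper argument to compare against; your forward direction (graph of multiplication Lagrangian, unit section Lagrangian, inversion as the $*$-structure, strong positivity expressing $g\,i(g) = 1_{t(g)}$) is correct, standard, and consistent with that remark.

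The converse, however, is where the entire content of Zakrzewski's theorem lives, and your proposal leaves it at the level of a plan with one concrete misstep. You say the source and target are ``in relational terms built from $m$, $s$, and the diagonal,'' but the diagonal $S \to S \times S$, $g \mapsto (g,g)$, is \emph{not} a canonical relation (its graph in $\overline{S} \times S \times S$ is isotropic of dimension $\dim S$, not Lagrangian); only the diagonal viewed as a morphism $pt \to S \times \overline{S}$ is Lagrangian, which is exactly why strong positivity is phrased that way. Hence $g \mapsto g g^{-1}$ cannot be produced as a composite in $\Symp$ as you suggest; it has to be extracted pointwise (e.g.\ $t(g)$ is the unique element $u$ of the unit Lagrangian with $(u,g;g) \in m$, uniqueness coming from the unit and $*$-axioms) and then shown to be a smooth submersion onto a base carrying an induced smooth (indeed Poisson) structure. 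Likewise the claims that $E$ is a manifold serving as unit space, that the domain of $m$ is exactly the fiber product $S \times_E S$ and is smooth, and that $m$ is single-valued and smooth there, are asserted rather than derived from the monoid axioms and the strong-transversality hypotheses. You acknowledge this gap honestly, but as it stands the reconstruction direction — which is the substance of the theorem — is not yet a proof.
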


The $*$-structure $s$ in Zakrzewski's correspondence serves as the inverse of the symplectic groupoid structure on $S \rightrightarrows P$, where $P \subseteq S$ is the Lagrangian submanifold defining the unit relation $e: pt \to S$. The source and target of $S \rightrightarrows P$ can be extracted from the requirement that $*$ be strongly positive.

\begin{rmk}
There is a similar characterization of Lie groupoids as strongly positive $*$-monoids in the category of smooth relations.
\end{rmk}

Reversing the arrows in the definition above leads to the notion of a \emph{symplectic comonoid}; we will call the structure morphisms of a symplectic comonoid the \emph{coproduct} and \emph{counit}, and will denote them by $\Delta$ and $\varepsilon$ respectively. Similarly, one can speak of a (strongly positive) $*$-structure on a symplectic comonoid.

\begin{ex}\label{ex:std-com}
Let $M$ be a manifold. Then $T^*M$ has a natural symplectic $*$-comonoid structure, obtained by reversing the arrows in its standard symplectic groupoid structure. To be explicit, the coproduct $T^*M \to T^*M \times T^*M$ is
\[
\Delta: (p,\xi+\eta) \mapsto ((p,\xi),(p,\eta)),
\]
which is obtained as the cotangent lift of the standard diagonal map $M \to M \times M$, and the counit $\varepsilon: T^*M \to pt$ is given by the zero section and is obtained as the cotangent lift of the canonical map $M \to pt$. The $*$-structure is the Schwartz transform.
\end{ex}

There is also a natural notion of a morphism between symplectic monoids:
\begin{defn}
Suppose $(S,m,e)$ and $(S',m',e')$ are symplectic monoids. A \emph{monoid morphism} $S \to S'$ is a canonical relation $L: S \to S'$ for which the following diagrams commute:
\begin{center}
\begin{tikzpicture}[>=angle 90]
	\node (UL) at (0,.5) {$S \times S$};
	\node (UR) at (3,.5) {$S' \times S'$};
	\node (LL) at (0,-1) {$S$};
	\node (LR) at (3,-1) {$S'$};
	
	\node (M) at (4.75,0) {$\text{and}$};
	
    \node (U2) at (7,.5) {$pt$};
	\node (LL2) at (6,-1) {$S$};
	\node (LR2) at (8,-1) {$S'$};

	\tikzset{font=\scriptsize};
	\draw[->] (UL) to node [above] {$L \times L$} (UR);
	\draw[->] (UL) to node [left] {$m$} (LL);
	\draw[->] (UR) to node [right] {$m'$} (LR);
	\draw[->] (LL) to node [above] {$L$} (LR);
	
	\draw[->] (U2) to node [left] {$e$} (LL2);
	\draw[->] (U2) to node [right] {$e'$} (LR2);
	\draw[->] (LL2) to node [above] {$L$} (LR2);
\end{tikzpicture}
\end{center}
and all compositions are strongly transversal. Reversing all arrows gives the notion of a \emph{comonoid morphism} between symplectic comonoids, and there is an obvious way to phrase the compatibility between a (co)monoid morphism and a given $*$-structure as well.
\end{defn}

\subsection{Hopf Algebra Objects}
\begin{defn}
A \emph{Hopf algebra object} in $\Symp$ consists of a symplectic manifold $S$ together with
\begin{itemize}
\item a symplectic monoid structure $(S,m,e)$,
\item a symplectic comonoid structure $(S,\Delta,\varepsilon)$, and
\item a symplectomorphism $i: S \to S$
\end{itemize}
such that the following diagrams commute, with all compositions strongly transversal:
\begin{itemize}
\item (compatibility between product and coproduct)
\begin{center}
\begin{tikzpicture}[>=angle 90]
	\node (U1) at (0,1) {$S \times S$};
	\node (U2) at (3,1) {$S$};
	\node (U3) at (6,1) {$S \times S$};
	\node (L1) at (0,-1) {$S \times S \times S \times S$};
	\node (L3) at (6,-1) {$S \times S \times S \times S$};

	\tikzset{font=\scriptsize};
	\draw[->] (U1) to node [above] {$m$} (U2);
	\draw[->] (U2) to node [above] {$\Delta$} (U3);
	\draw[->] (U1) to node [left] {$\Delta \times \Delta$} (L1);
	\draw[->] (L3) to node [right] {$m \times m$} (U3);
	\draw[->] (L1) to node [above] {$id \times \sigma \times id$} (L3);
\end{tikzpicture}
\end{center}
where $\sigma: S \times S \to S \times S$ is the symplectomorphism exchanging components,
\item (compatibilities between product and counit, between coproduct and unit, and between unit and counit respectively)
\begin{center}
\begin{tikzpicture}[>=angle 90]
	\node (U1) at (0,1) {$S \times S$};
	\node (U2) at (3,1) {$S$};
	\node (L1) at (1.5,-1) {$pt$};
	
	\node at (3,0) {$\text{,}$};
	
	\node (U3) at (4,1) {$S$};
	\node (U4) at (7,1) {$S \times S$};
	\node (L2) at (5.5,-1) {$pt$};
	
	\node at (8,0) {, \quad and };
	
	\node (U5) at (10.5,1) {$S$};
	\node (L5) at (9,-1) {$pt$};
	\node (L6) at (12,-1) {$pt$};

	\tikzset{font=\scriptsize};
	\draw[->] (U1) to node [above] {$m$} (U2);
	\draw[->] (U1) to node [left] {$\varepsilon \times \varepsilon$} (L1);
	\draw[->] (U2) to node [right] {$\varepsilon$} (L1);
	
	\draw[->] (U3) to node [above] {$\Delta$} (U4);
	\draw[->] (L2) to node [left] {$e$} (U3);
	\draw[->] (L2) to node [right] {$e \times e$} (U4);
	
	\draw[->] (L5) to node [left] {$e$} (U5);
	\draw[->] (U5) to node [right] {$\varepsilon$} (L6);
	\draw[->] (L5) to node [above] {$id$} (L6);
\end{tikzpicture}
\end{center}
\item (antipode conditions)
\begin{center}
\begin{tikzpicture}[>=angle 90]
	\node (U1) at (0,1) {$S \times S$};
	\node (U3) at (4,1) {$S \times S$};
	\node (L1) at (0,-1) {$S$};
	\node (L2) at (2,-1) {$pt$};
	\node (L3) at (4,-1) {$S$};
	
	\node at (5.5,0) {$\text{and}$};
	
	\node (U4) at (7,1) {$S \times S$};
	\node (U6) at (11,1) {$S \times S$};
	\node (L4) at (7,-1) {$S$};
	\node (L5) at (9,-1) {$pt$};
	\node (L6) at (11,-1) {$S$};

	\tikzset{font=\scriptsize};
	\draw[->] (U1) to node [above] {$id \times i$} (U3);
	\draw[->] (L1) to node [left] {$\Delta$} (U1);
	\draw[->] (U3) to node [right] {$m$} (L3);
	\draw[->] (L1) to node [above] {$\varepsilon$} (L2);
	\draw[->] (L2) to node [above] {$e$} (L3);
	
	\draw[->] (U4) to node [above] {$i \times id$} (U6);
	\draw[->] (L4) to node [left] {$\Delta$} (U4);
	\draw[->] (U6) to node [right] {$m$} (L6);
	\draw[->] (L4) to node [above] {$\varepsilon$} (L5);
	\draw[->] (L5) to node [above] {$e$} (L6);
\end{tikzpicture}
\end{center}
\end{itemize}
\end{defn}

\begin{ex}\label{ex:hopf}
Equip the cotangent bundle $T^*G$ of a Lie group $G$ with the comonoid structure of Example~\ref{ex:std-com} and the monoid structure coming from its symplectic groupoid structure over $\g^*$. Then these two structures together with the ``antipode'' $T^*i$, where $i: G \to G$ is inversion, make $T^*G$ into a Hopf algebra object in the symplectic category. 

Note that we have the same structure on the cotangent bundle of a more general Lie groupoid, but this will not form a Hopf algebra object; in particular, the antipode conditions fail owing to the fact that the groupoid product is not defined on all of $G \times G$. Later we will see that this can be fixed by introducing a ``base'' more general than $pt$; this provides explicit examples of ``hopfoids'' satisfying our Definition~\ref{defn:symp-hopf} which do not satisfy Zakrzewski's definitions.
\end{ex}

\begin{rmk}
An alternate notion of a groupoid-like object in the symplectic category is introduced in~\cite{CC}. This concept of a \emph{relational symplectic groupoid} is similar to that of a symplectic $*$-monoid except that the compatibilities between multiplication and the unit hold only up to an equivalence relation and not necessarily strictly. All of the symplectic $*$-monoids the constructions outlined in this paper produce can thus be viewed as examples of relational symplectic groupoids, but our notion of a symplectic hopfoid differs in that it brings in a general type of ``base'' with corresponding source and target morphisms. It would be interesting to explore further the relation between relational symplectic groupoids and symplectic hopfoids.
\end{rmk}

There is a natural notion of morphism between Hopf algebra objects in $\Symp$, which we briefly return to in our final discussion:
\begin{defn}\label{defn:hopf-morphism}
Suppose $S$ and $S'$ are Hopf algebra objects in $\Symp$. A \emph{morphism} $S \to S'$ is a canonical relation $L: S \to S'$ which: (i) is a monoid morphism for the symplectic monoid structures on $S$ and $S'$, (ii) is a comonoid morphism for the symplectic comonoid structures on $S$ and $S'$, and (iii) preserves the antipodes of $S$ and $S'$ in the obvious way.
\end{defn}

\section{Double Groupoids}\label{chap:dbl-grpds}

\subsection{Preliminaries}
\begin{defn}\label{defn:dbl-grpd}
A \emph{double Lie groupoid} is a diagram
\begin{center}
\begin{tikzpicture}[>=angle 90,scale=.85]
	\node at (0,0) {$H$};
	\node at (2,0) {$M$};
	\node at (0,2) {$D$};
	\node at (2,2) {$V$};

	\tikzset{font=\scriptsize};
	\draw[->] (.4,.08) -- (1.6,.08);
	\draw[->] (.4,-.08) -- (1.6,-.08);
	
	\draw[->] (-.08,1.6) -- (-.08,.4);
	\draw[->] (.08,1.6) -- (.08,.4);
	
	\draw[->] (.4,2+.08) -- (1.6,2+.08);
	\draw[->] (.4,2-.08) -- (1.6,2-.08);
	
	\draw[->] (2-.08,1.6) -- (2-.08,.4);
	\draw[->] (2+.08,1.6) -- (2+.08,.4);
\end{tikzpicture}
\end{center}
of Lie groupoids such that the structure maps of the top and bottom groupoids give homomorphisms from the left groupoid to the right groupoid, and vice-versa. We will refer to the  four groupoid structures involved as the top, bottom, left, and right groupoids. We also assume that the \emph{double source map}
\[
D \to H \times V
\]
is a surjective submersion. Also, we often refer to $D$ itself as the double Lie groupoid and to $M$ as its double base. When $M$ is a point, we will call $D$ a \emph{double Lie group}. Finally, we refer to the double groupoid obtained by exchanging the roles of $H$ and $V$ as the \emph{transposed} double groupoid.
\end{defn}

%
%
%
%

\begin{ex}\label{ex:dmain}
For any Lie groupoid $G \rightrightarrows M$, there is a double Lie groupoid structure on
\begin{center}
\begin{tikzpicture}[>=angle 90]
	\node (LL) at (0,0) {$M$};
	\node (LR) at (2,0) {$M.$};
	\node (UL) at (0,2) {$G$};
	\node (UR) at (2,2) {$G$};

	\tikzset{font=\scriptsize};
	\draw[->] (.4,.07) -- (1.6,.07);
	\draw[->] (.4,-.07) -- (1.6,-.07);
	
	\draw[->] (-.07,1.6) -- (-.07,.4);
	\draw[->] (.07,1.6) -- (.07,.4);
	
	\draw[->] (.4,2.07) -- (1.7,2.07);
	\draw[->] (.4,2-.07) -- (1.7,2-.07);
	
	\draw[->] (2-.07,1.6) -- (2-.07,.4);
	\draw[->] (2+.07,1.6) -- (2+.07,.4);
\end{tikzpicture}
\end{center}
Here, the left and right sides are the given groupoid structures, while the top and bottom are trivial groupoids.
\end{ex}

\begin{ex}\label{ex:dinertia}
Again for any Lie groupoid $G \rightrightarrows M$, there is a double Lie groupoid structure on
\begin{center}
\begin{tikzpicture}[>=angle 90]
	\node (LL) at (0,0) {$M \times M$};
	\node (LR) at (3,0) {$M.$};
	\node (UL) at (0,2) {$G \times G$};
	\node (UR) at (3,2) {$G$};

	\tikzset{font=\scriptsize};
	\draw[->] (.9,.07) -- (2.6,.07);
	\draw[->] (.9,-.07) -- (2.6,-.07);
	
	\draw[->] (-.07,1.6) -- (-.07,.4);
	\draw[->] (.07,1.6) -- (.07,.4);
	
	\draw[->] (.9,2.07) -- (2.5,2.07);
	\draw[->] (.9,2-.07) -- (2.5,2-.07);
	
	\draw[->] (3-.07,1.6) -- (3-.07,.4);
	\draw[->] (3.07,1.6) -- (3.07,.4);
\end{tikzpicture}
\end{center}
Here, the right side is the given groupoid structure, the top and bottom are pair groupoids, and the left is a product groupoid.
\end{ex}

For much of what follows, we will need a consistent labeling of the structure maps involved in the various groupoids considered. First, the source, target, unit, inverse, and product of the right groupoid $V \rightrightarrows M$ are respectively
\[
s_V(\cdot), t_V(\cdot), 1^V_\cdot, i_V(\cdot), m_V(\cdot,\cdot) \text{ or } \cdot \circ_V \cdot
\]
The structure maps of $H \rightrightarrows M$ will use the same symbols with $V$ replaced by $H$. Now, the structure maps of the top and left groupoids will use the same symbol as those of the \emph{opposite} structure with a tilde on top; so for example, the structure maps of $D \rightrightarrows H$ are
\[
\widetilde{s}_V(\cdot), \widetilde{t}_V(\cdot), \widetilde{1}^V_\cdot, \widetilde{i}_V(\cdot), \wt m_V(\cdot,\cdot) \text{ or } \cdot \widetilde{\circ}_V \cdot
\]

To emphasize: the structure maps of the groupoid structure on $D$ \emph{over} $V$ use an $H$, and those of the groupoid structure on $D$ \emph{over} $H$ use a $V$, or in other words the structure maps of the horizontal structures in \ref{defn:dbl-grpd} use $H$ while those of the vertical structures use $V$. This has a nice practical benefit in that it is simpler to keep track of the various relations these maps satisfy; for example, the maps $\widetilde{s}_{H}$ and $s_{H}$ give the groupoid homomorphism
\begin{center}
\begin{tikzpicture}[>=angle 90,scale=.75]
	\node (LL) at (0,0) {$H$};
	\node (LR) at (2,0) {$M,$};
	\node (UL) at (0,2) {$D$};
	\node (UR) at (2,2) {$V$};

	\tikzset{font=\scriptsize};
	\draw[->] (UL) to node [above] {$\widetilde{r}_{H}$} (UR);
	
	\draw[->] (-.07,1.6) -- (-.07,.4);
	\draw[->] (.07,1.6) -- (.07,.4);
	
	\draw[->] (2-.07,1.6) -- (2-.07,.4);
	\draw[->] (2+.07,1.6) -- (2+.07,.4);
	
	\draw[->] (LL) -- node [above] {$r_{H}$} (LR);
\end{tikzpicture}
\end{center}
so for instance we have: $t_V(\wt s_H(a)) = s_H(\wt t_V(a))$, $i_V(\wt s_H(a)) = \wt s_H(\wt i_V(a))$, $\wt s_H(\wt 1^V_v) = 1^V_{s_H(v)}$, etc. We will make extensive use of such identities.

We will denote elements of $D$ as squares with sides labeled by the possible sources and targets:
\begin{center}
\begin{tikzpicture}[scale=.75]
	\draw (0,0) rectangle (2,2);
	\node at (1,1) {$a$};

	\tikzset{font=\scriptsize};
	\node at (-.6,1) {$\wt t_V(a)$};
	\node at (1,2.4) {$\wt s_{H}(a)$};
	\node at (2.6,1) {$\wt s_V(a)$};
	\node at (1,-.4) {$\wt t_{H}(a)$};
\end{tikzpicture}
\end{center}
so that the left and right sides are the target and source of the left groupoid structure while the top and bottom sides are the source and target of the top groupoid structure. This lends itself well to compositions: two squares $a$ and $a'$ are composable in the left groupoid if the right side of the first is the left of the second, i.e. if $\widetilde{s}_{V}(a) = \widetilde{t}_{V}(a')$, and the composition $a \,\wt\circ_{V} a'$ in the left groupoid $D \rightrightarrows H$ can be viewed as ``horizontal concatenation'':
\begin{center}
\begin{tikzpicture}[scale=.75]
	\node at (1,1) {$a$};
	\node at (3,1) {$a'$};
	\node at (6,1) {$=$};
	\node at (9,1) {$a\, \wt\circ_V a'$};

	\tikzset{font=\scriptsize};
	\draw (0,0) rectangle (2,2);
	\node at (-.6,1) {$\wt t_V(a)$};
	\node at (1,2.4) {$\wt s_{H}(a)$};
	\node at (1,-.4) {$\wt t_{H}(a)$};
	
	\draw (2,0) rectangle +(2,2);
	\node at (3,2.4) {$\wt s_{H}(a')$};
	\node at (4.7,1) {$\wt s_V(a')$};
	\node at (3,-.4) {$\wt t_{H}(a')$};
	
	\draw (8,0) rectangle +(2,2);
	\node at (7.4,1) {$\wt t_V(a)$};
	\node at (9,2.4) {$\wt s_{H}(a) \circ_V \wt s_{H}(a')$};
	\node at (10.7,1) {$\wt s_V(a')$};
	\node at (9,-.4) {$\wt t_{H}(a) \circ_V \wt t_{H}(a')$};
\end{tikzpicture}
\end{center}
Similarly, the groupoid composition in the top groupoid $D \rightrightarrows V$ can be viewed as ``vertical concatenation''.

With these notations, the compatibility between the two groupoid products on $D$ can then be expressed as saying that composing vertically and then horizontally in
\begin{center}
\begin{tikzpicture}
	\draw (0,0) rectangle (1.5,1.5);
	\node at (.75,.75) {$a$};
	\draw (1.5,0) rectangle (3,1.5);
	\node at (2.25,.75) {$b$};
	\draw (0,1.5) rectangle (1.5,3);
	\node at (.75,2.25) {$c$};
	\draw (1.5,1.5) rectangle (3,3);
	\node at (2.25,2.25) {$d$};
\end{tikzpicture}
\end{center}
produces the same result as composing horizontally and then vertically, whenever all compositions involved are defined.

\begin{defn}
The \emph{core} of a double Lie groupoid $D$ is the submanifold $C$ of elements of $D$ whose sources are both units for the right $V \rightrightarrows M$ and bottom $H \rightrightarrows M$ groupoids; that is, the set of elements of the form
\begin{center}
\begin{tikzpicture}[scale=.75]
	\node at (1,1) {$a$};
	
	\tikzset{font=\scriptsize};
	\draw (0,0) rectangle (2,2);
	\node at (-.7,1) {$\wt t_{V}(a)$};
	\node at (1,2.4) {$1^V_m$};
	\node at (2.6,1) {$1^{H}_m$};
	\node at (1,-.4) {$\wt t_H(a)$};
\end{tikzpicture}
\end{center}
The condition on the double source map in the definition of a double Lie groupoid ensures that $C$ is a submanifold of $D$.
\end{defn}

\begin{thrm}[Brown-Mackenzie, \cite{BM}]
The core of a double Lie groupoid $D$ has a natural Lie groupoid structure over the double base $M$.
\end{thrm}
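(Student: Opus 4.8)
The plan is to equip $C$ with explicit structure maps over $M$ and verify the groupoid axioms by hand; all the structure is assembled from the four groupoids that make up $D$, and the interchange law relating the two multiplications on $D$ is what forces the axioms to hold.

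Source, target and units are read off from the square calculus. A core element $a$ is a square with top edge $1^V_m$ and right edge $1^{H}_m$, so three of its corners equal $m$ and the fourth is some point $m'$; I set $\mathbf{s}(a):=m=s_V(\widetilde{s}_H(a))$ (the corner at which the two identity edges meet) and $\mathbf{t}(a):=m'=t_H(\widetilde{t}_V(a))=t_V(\widetilde{t}_H(a))$, the last two expressions agreeing because $\widetilde{t}_V$ and $\widetilde{t}_H$ are groupoid homomorphisms, via the identities recorded after Definition~\ref{defn:dbl-grpd}. The unit over $m$ is the double identity square $\mathbf{1}_m:=\widetilde{1}^V_{1^H_m}=\widetilde{1}^H_{1^V_m}$, which lies in $C$ with $\mathbf{s}(\mathbf{1}_m)=\mathbf{t}(\mathbf{1}_m)=m$.

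The only substantial construction is the multiplication. Two composable core elements $a,b$ (meaning $\mathbf{s}(a)=\mathbf{t}(b)$) cannot be composed directly in $D\rightrightarrows H$ or in $D\rightrightarrows V$ because the relevant edges do not match, so one composes ``diagonally'' after padding with identity squares. One convenient recipe: writing $\ell_b:=\widetilde{t}_V(b)\in H$ for the left edge of $b$, put
\[
a\cdot b:=\bigl(a\,\widetilde{\circ}_H\,\widetilde{1}^V_{\ell_b}\bigr)\,\widetilde{\circ}_V\,b .
\]
Using the homomorphism identities one checks that $a\,\widetilde{\circ}_H\,\widetilde{1}^V_{\ell_b}$ is defined exactly when $\mathbf{s}(a)=\mathbf{t}(b)$, that the ensuing composition with $b$ along $\widetilde{\circ}_V$ is then automatic, and---the crucial point---that the resulting square has top edge $1^V_{\mathbf{s}(b)}$ and right edge $1^{H}_{\mathbf{s}(b)}$, hence lies in $C$, with $\mathbf{s}(a\cdot b)=\mathbf{s}(b)$, $\mathbf{t}(a\cdot b)=\mathbf{t}(a)$, and with left and bottom edges the expected composites, in $H$ and in $V$, of the corresponding edges of $a$ and $b$. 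The inverse is built by an analogous recipe from the groupoid inverses $\widetilde{i}_H,\widetilde{i}_V$. One then checks $\mathbf{s}\circ\mathbf{1}=\mathbf{t}\circ\mathbf{1}=\mathrm{id}_M$; the unit laws $\mathbf{1}_{\mathbf{t}(a)}\cdot a=a=a\cdot\mathbf{1}_{\mathbf{s}(a)}$, which collapse once one observes that composing with a double identity square absorbs the inner $\widetilde{1}^V$-padding; associativity; and $a\cdot a^{-1}=\mathbf{1}_{\mathbf{t}(a)}$, $a^{-1}\cdot a=\mathbf{1}_{\mathbf{s}(a)}$. After substituting the definitions, each of these identities reduces to the groupoid axioms of the four constituent groupoids together with the interchange law for the two multiplications on $D$---the statement, recalled just before the definition of the core, that a $2\times2$ array of squares composes to the same element whether one does rows first or columns first.

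The Lie part is then immediate. The hypothesis that the double source map $D\to H\times V$ is a surjective submersion already gives that $C$ is an embedded submanifold of $D$, and it also exhibits $\mathbf{s}$ (hence $\mathbf{t}$) as a surjective submersion $C\to M$: under the double source map $C$ is the preimage of the embedded submanifold $\{(1^H_m,1^V_m):m\in M\}$ of $H\times V$, and $\mathbf{s}$ is the induced map onto it; consequently $C\times_M C$ is an embedded submanifold of $C\times C$. Since $\mathbf{s},\mathbf{t},\mathbf{1}$, the multiplication and the inversion are all obtained by composing and restricting the smooth structure maps of the four Lie groupoids along with the inclusion $C\hookrightarrow D$, they are smooth, so $C\rightrightarrows M$ is a Lie groupoid over the double base $M$. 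The real work, and the only genuine obstacle, is the bookkeeping in the previous paragraph: pinning down correct padding squares (and a correct inverse) and checking that the interchange law on $D$ does force the core's operations to obey the groupoid axioms---the idea is routine, but keeping the square calculus straight requires care.
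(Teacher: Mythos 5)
Your proof is correct and follows essentially the same route the paper indicates: the paper cites this theorem to Brown--Mackenzie without proof and only records the core product as the composite of the $2\times 2$ array of $c$, $\widetilde{1}^{H}_{\widetilde{t}_{H}(c')}$, $\widetilde{1}^{V}_{\widetilde{t}_{V}(c')}$, $c'$, which (since the right-hand column collapses to $c'$) is exactly your formula $(a\,\widetilde{\circ}_{H}\,\widetilde{1}^{V}_{\widetilde{t}_{V}(b)})\,\widetilde{\circ}_{V}\,b$. Your verification strategy---square calculus plus the interchange law, with smoothness and the fibered products handled via the double source map being a surjective submersion---is the standard argument, and the only sketch-level omission is an explicit inverse, e.g.\ $a^{-1}=\widetilde{i}_{V}\bigl(a\,\widetilde{\circ}_{H}\,\widetilde{1}^{V}_{i_{H}(\widetilde{t}_{V}(a))}\bigr)$, which fits your ``analogous recipe'' and causes no gap.
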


The groupoid structure on the core comes from a combination of the two groupoid structures on $D$. Explicitly, the groupoid product of two elements $c$ and $c'$ in the core can be expressed as composing vertically and then horizontally (or equivalently  horizontally and then vertically) in the following diagram:
\begin{center}
\begin{tikzpicture}
	\draw (0,0) rectangle (1.5,1.5);
	\node at (.75,.75) {$c$};
	\draw (1.5,0) rectangle (3,1.5);
	\node at (2.25,.75) {$\wt 1^{H}_{\wt t_{H}(c')}$};
	\draw (0,1.5) rectangle (1.5,3);
	\node at (.75,2.25) {$\wt 1^V_{\wt t_V(c')}$};
	\draw (1.5,1.5) rectangle (3,3);
	\node at (2.25,2.25) {$c'$};
\end{tikzpicture}
\end{center}

The core groupoid of Example~\ref{ex:dmain} is the trivial groupoid $M \rightrightarrows M$ while that of Example~\ref{ex:dinertia} is $G \rightrightarrows M$ itself.

\subsection{Symplectic Double Groupoids}
\begin{defn}
A \emph{symplectic double groupoid} is a double Lie groupoid $D$ where $D$ is equipped with a symplectic structure making the top and left groupoid structures in~(\ref{defn:dbl-grpd}) symplectic groupoids.
\end{defn}

It is well-known that a symplectic groupoid $S \rightrightarrows P$ induces a unique Poisson structure on $P$ with respect to which the source is a Poisson map. In the double groupoid setting, when $V$ and $H$ are equipped with these Poisson structures, a result of Mackenzie (\cite{M}) shows that the groupoids $V \rightrightarrows M$ and $H \rightrightarrows M$ are actually Poisson groupoids in duality, meaning that their Lie algebroids are dual to one another.

The double symplectic structure on $D$ endows the core with additional structure as follows:

\begin{thrm}[Mackenzie \cite{M}]
The core $C$ of a symplectic double groupoid $D$ is a symplectic submanifold of $D$ and the induced groupoid structure on $C \rightrightarrows M$ is that of a symplectic groupoid.
\end{thrm}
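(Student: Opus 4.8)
The statement comprises two assertions: that $\omega_D$ restricts to a nondegenerate form on $C$, and that the Brown--Mackenzie Lie groupoid $C \rightrightarrows M$ is compatible with $\omega_D|_C$, i.e.\ its multiplication has Lagrangian graph. The plan is to establish both at once by exhibiting $C$, with the form $\omega_D|_C$, as a \emph{symplectic reduction} of $D$; this is the statement recorded as Theorem~\ref{thrm:core} and used in the sequel, and it makes the groupoid compatibility nearly formal.

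\emph{Realizing $C$ as a reduction.} Since $D \rightrightarrows V$ and $D \rightrightarrows H$ are symplectic groupoids, $V$ and $H$ carry induced Poisson structures for which $\wt s_H, \wt t_H\colon D \to V$ and $\wt s_V, \wt t_V\colon D \to H$ are (anti-)Poisson submersions and for which the unit submanifolds $1^VM \subseteq V$, $1^HM \subseteq H$ are coisotropic --- the units of any Poisson groupoid form a coisotropic submanifold. Hence each of $\wt s_H^{-1}(1^VM)$, $\wt t_H^{-1}(1^VM)$, $\wt s_V^{-1}(1^HM)$, $\wt t_V^{-1}(1^HM)$ is a coisotropic submanifold of $D$, being the preimage of a coisotropic submanifold under a Poisson submersion; these recover, in the model case $D = T^*G$ of the introduction, the coisotropic submanifolds $N^*\F_s$, $N^*\F_t$, $T^*G|_M$, and a dimension count gives each the reduced dimension $2\dim M = \dim C$. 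Fix one, say $W := \wt s_H^{-1}(1^VM)$. The heart of the matter is to determine the characteristic foliation $W^\perp$ explicitly from the double structure (this is Lemma~\ref{lem:fol}): I expect the leaf through $a \in W$ to be the orbit of $a$ under a free action built from the two groupoid structures --- composing $a$, in the structure transverse to the one defining $W$, with unit squares over a varying base --- the explicit action being read off the homomorphism identities relating the two families of structure maps. Granting this, $C \subseteq W$ is a global cross-section for $W^\perp$, meeting every leaf exactly once: surjectivity uses surjectivity of the double source map, injectivity uses freeness of the action. Hence $C \hookrightarrow W \to W/W^\perp$ is a diffeomorphism, and since the pullback of the reduced symplectic form to a cross-section is the restriction of the ambient form, $\omega_D|_C$ is nondegenerate. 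Thus $C$ is a symplectic submanifold and $red\colon D \red C$ is a reduction canonical relation.

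\emph{The core groupoid is symplectic.} The Brown--Mackenzie multiplication $m_C$, unit section, and inverse of $C \rightrightarrows M$ are assembled from the multiplications $\wt m_H$, $\wt m_V$, the unit squares $\wt 1^H$, $\wt 1^V$ and the inverses on $D$ through the square-array recipe recalled above, the interchange law for the two products on $D$ being exactly what makes that recipe unambiguous. Now $\wt m_H$ and $\wt m_V$ have Lagrangian graphs (as multiplications of symplectic groupoids) and $\wt 1^H(V)$, $\wt 1^V(H)$ are Lagrangian submanifolds of $D$ (unit submanifolds of symplectic groupoids), while $red\colon D \to C$ and its transpose $red^t\colon C \to D$ are canonical relations by the previous step. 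I would therefore assemble the graph of $m_C$ in $\overline C \times \overline C \times C$ out of these canonical relations by the operations of the symplectic category --- the recipe dictated by the square array, with $red^t$ lifting the inputs into $D$, the $\wt m$'s performing the concatenations, the unit squares inserted where called for, and $red$ projecting back to $C$ --- and invoke the proposition that a strongly transversal composite of canonical relations is again canonical to conclude that this graph is Lagrangian. Equivalently $m_C^*(\omega_D|_C) = \mathrm{pr}_1^*(\omega_D|_C) + \mathrm{pr}_2^*(\omega_D|_C)$, which is exactly the condition that $C \rightrightarrows M$ be a symplectic groupoid. (Alternatively, this identity can be obtained by a direct computation on the square array, using multiplicativity of $\wt m_H$, $\wt m_V$ and isotropy of the unit sections.)

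The genuine obstacle is the explicit description of $W^\perp$, i.e.\ Lemma~\ref{lem:fol}: one must use the homomorphism conditions and the interchange law to show the candidate leafwise action preserves $W$ and $\omega_D$, integrates the characteristic distribution, and is free with $C$ as a global slice. Once that is in place the symplecticness of $C$ is immediate and the groupoid compatibility follows from the manipulation of canonical relations above; the strong transversality of the compositions there is then routine --- automatic whenever a reduction sits on the left or a coreduction on the right, and otherwise a consequence of the submersivity of the double source map.
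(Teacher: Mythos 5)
The statement you are proving is not proved in the paper at all: it is quoted as a theorem of Mackenzie~\cite{M}, so there is no in-paper argument to match. What you propose is, in effect, the paper's own later development in Section~3: realizing the core as the symplectic reduction of $\wt s_H^{-1}(1^VM)$ (Lemma~\ref{lem:fol} and Theorem~\ref{thrm:core}), noting that the core is a cross-section of the characteristic foliation so that the reduced form coincides with $\omega|_C$ (the proposition following Lemma~\ref{lem:fol}), and then exhibiting the Brown--Mackenzie multiplication as the composite $\mathbf e^t \circ \wt m_V \circ (\mathbf e \times \mathbf e)$ of canonical relations (the Core Groupoids subsection, which the paper then feeds into Zakrzewski's characterization). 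So the architecture is the right one, and your first half --- coisotropy of the four preimages, the dimension count, and nondegeneracy of $\omega|_C$ via the cross-section --- is sound and agrees with the paper.

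The difficulty is that, as a proof, essentially all of the real content is deferred. The two computational pillars are exactly the ones you ``grant'': the explicit description of the characteristic foliation (Lemma~\ref{lem:fol}, whose proof in the paper needs the dimension count, the symplectic orthogonality of source and target fibers, and left translation by Lagrangian bisections) and the identification of the composite $\mathbf e^t \circ \wt m_V \circ (\mathbf e \times \mathbf e)$ with the Brown--Mackenzie product, which is a genuine square-calculus computation, not just ``the recipe dictated by the square array.'' Without these, nothing beyond the framing is established. In addition, your transversality remark is misapplied: the automatic criterion of Section~2.1 (from~\cite{W2}) covers a pair whose \emph{first} factor is a surmersion or whose \emph{second} is a cosurmersion, i.e.\ reduction first or coreduction last; in the core-product composite the coreduction $\mathbf e \times \mathbf e$ comes first and the reduction $\mathbf e^t$ comes last, which is precisely the arrangement the criterion does not cover, so strong transversality there has to be verified directly (as the paper does in effect, by checking composability and that the intermediate element already lies in $\dom \mathbf e^t$). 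Two smaller points: each leaf meets $C$ exactly once simply because $\lambda = i_H(\wt s_V(a))$ is the unique solution --- no freeness or double-source argument is needed there (the double source condition is what makes $C$ a submanifold in the first place); and the coisotropy of $1^VM$ rests on $V \rightrightarrows M$ being a Poisson groupoid, which is itself a result of Mackenzie the paper imports, so your argument is not independent of~\cite{M} either.
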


\begin{ex}\label{ex:main}
For any groupoid $G \rightrightarrows M$, there is a symplectic double groupoid structure on $T^*G$ of the form
\begin{center}
\begin{tikzpicture}[>=angle 90]
	\node (LL) at (0,0) {$A^*$};
	\node (LR) at (2,0) {$M$};
	\node (UL) at (0,2) {$T^*G$};
	\node (UR) at (2,2) {$G$};

	\tikzset{font=\scriptsize};
	\draw[->] (.4,.07) -- (1.6,.07);
	\draw[->] (.4,-.07) -- (1.6,-.07);
	
	\draw[->] (-.07,1.6) -- (-.07,.4);
	\draw[->] (.07,1.6) -- (.07,.4);
	
	\draw[->] (.6,2.07) -- (1.7,2.07);
	\draw[->] (.6,2-.07) -- (1.7,2-.07);
	
	\draw[->] (2-.07,1.6) -- (2-.07,.4);
	\draw[->] (2+.07,1.6) -- (2+.07,.4);
\end{tikzpicture}
\end{center}
Here, $A$ is the Lie algebroid of $G \rightrightarrows M$, the right groupoid structure is the given one on $G$, the top and bottom are the natural groupoid structures on vector bundles given by fiber-wise addition, and the left structure is the induced symplectic groupoid structure on the cotangent bundle of a Lie groupoid. The core of this symplectic double groupoid is symplectomorphic to $T^*M$, and the core groupoid is simply $T^*M \rightrightarrows M$. Note, in particular, that when $G$ is a Lie group $T^*G$ is a symplectic double group.
\end{ex}

\begin{ex}\label{ex:inertia}
Again for any groupoid $G \rightrightarrows M$, there is a symplectic double groupoid structure on $\overline{T^*G} \times T^*G$ of the form
\begin{center}
\begin{tikzpicture}[>=angle 90]
	\node (LL) at (0,0) {$\overline{A^*} \times A^*$};
	\node (LR) at (3,0) {$A^*$};
	\node (UL) at (0,2) {$\overline{T^*G} \times T^*G$};
	\node (UR) at (3,2) {$T^*G$};

	\tikzset{font=\scriptsize};
	\draw[->] (.9,.07) -- (2.6,.07);
	\draw[->] (.9,-.07) -- (2.6,-.07);
	
	\draw[->] (-.07,1.6) -- (-.07,.4);
	\draw[->] (.07,1.6) -- (.07,.4);
	
	\draw[->] (1.2,2.07) -- (2.5,2.07);
	\draw[->] (1.2,2-.07) -- (2.5,2-.07);
	
	\draw[->] (3-.07,1.6) -- (3-.07,.4);
	\draw[->] (3.07,1.6) -- (3.07,.4);
\end{tikzpicture}
\end{center}
Here, the right side is the induced symplectic groupoid structure on $T^*G$, the top and bottom are pair groupoids, and the left is a product groupoid. The core is symplectomorphic to $T^*G$, and the core groupoid is $T^*G \rightrightarrows A^*$.
\end{ex}

Both of the above examples are special cases of the following result due to Mackenzie:

\begin{thrm}[Mackenzie \cite{M}]\label{thrm:ctdbl}
Let $D$ be a double Lie groupoid. Then the cotangent bundle $T^*D$ has a natural symplectic double groupoid structure
\begin{center}
\begin{tikzpicture}[>=angle 90]
	\node (LL) at (0,0) {$A^*H$};
	\node (LR) at (3,0) {$A^*C$};
	\node (UL) at (0,2) {$T^*D$};
	\node (UR) at (3,2) {$A^*V$};

	\tikzset{font=\scriptsize};
	\draw[->] (.5,.07) -- (2.5,.07);
	\draw[->] (.5,-.07) -- (2.5,-.07);
	
	\draw[->] (-.07,1.6) -- (-.07,.4);
	\draw[->] (.07,1.6) -- (.07,.4);
	
	\draw[->] (.6,2.07) -- (2.5,2.07);
	\draw[->] (.6,2-.07) -- (2.5,2-.07);
	
	\draw[->] (3-.07,1.6) -- (3-.07,.4);
	\draw[->] (3+.07,1.6) -- (3+.07,.4);
\end{tikzpicture}
\end{center}
where $A^*H$ and $A^*V$ are the duals of the Lie algebroids of $H \rightrightarrows M$ and $V \rightrightarrows M$ respectively, and $A^*C$ is the dual of the Lie algebroid of the core groupoid $C \rightrightarrows M$. The core of this symplectic double groupoid is symplectomorphic to $T^*C$ where $C$ is the core of $D$.
\end{thrm}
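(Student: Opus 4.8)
The plan is to build the two symplectic groupoid structures on $T^*D$ by applying the Coste--Dazord--Weinstein construction to the two Lie groupoid structures already carried by $D$---the ``horizontal'' groupoid $D \rightrightarrows V$ and the ``vertical'' groupoid $D \rightrightarrows H$---then to check that the two resulting structures on $T^*D$ are compatible, and finally to identify the four sides and the core. Since each of $D \rightrightarrows V$ and $D \rightrightarrows H$ is an ordinary Lie groupoid, each endows $T^*D$ with a symplectic groupoid structure over the dual $A^*$ of its Lie algebroid, and a dimension count shows each such base has dimension $\dim D = \tfrac12\dim T^*D$, consistent with a symplectic double groupoid. What remains is: (i) that the two structures fit together as in Definition~\ref{defn:dbl-grpd}; (ii) that the four sides are $A^*V$, $A^*H$, $A^*C$ as stated; and (iii) that the core is symplectomorphic to $T^*C$.

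For (i) I would argue by naturality of the cotangent construction. For any Lie groupoid $G \rightrightarrows M$, the multiplication of the symplectic groupoid $T^*G \rightrightarrows A^*G$ is, up to the usual sign conventions, the cotangent lift of the groupoid multiplication viewed as a smooth relation $G \times G \to G$ (the graph of the partially defined composition), its inversion agrees with the cotangent lift of the groupoid inversion up to fiberwise sign, and its source, target, and unit come from restriction of covectors to the Lie algebroid, equivalently from the identification $A^*G \cong N^*M \subseteq T^*G$ together with the reduction constructions of Example~\ref{ex:leaf}. Thus every structure map of the Coste--Dazord--Weinstein groupoid is obtained from a structure map of $G$ by applying the functor $T^*: \SRel \to \Symp$ and its reductions. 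Now the axioms of Definition~\ref{defn:dbl-grpd} for $D$ form a list of identities among the smooth relations $\wt s_V, \wt t_V, \wt m_V, \wt i_V, \dots$ and $\wt s_H, \wt t_H, \wt m_H, \wt i_H, \dots$ expressing that the structure maps of one groupoid on $D$ are homomorphisms for the other; applying $T^*$ to these identities---legitimate because $T^*$ is a functor, the maps involved are surjective submersions so the compositions are strongly transversal, and the reductions of Example~\ref{ex:leaf} are natural---yields the corresponding identities for the two symplectic groupoid structures on $T^*D$, which is precisely the compatibility demanded by Definition~\ref{defn:dbl-grpd}; the surjectivity and submersivity of the double source map of $T^*D$ then follow from those of $D$ in the same way. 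Conceptually, a double Lie groupoid is a groupoid object internal to Lie groupoids satisfying the double source condition, and the cotangent construction is a product-compatible, naturality-respecting assignment from Lie groupoids to symplectic groupoids, hence carries such objects to symplectic double groupoids.

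For (ii) and (iii): the right side $A^*V \rightrightarrows A^*C$, and symmetrically $A^*H \rightrightarrows A^*C$, arises as a dual, in the sense of Mackenzie's $\mathcal{LA}$-groupoid duality, of the $\mathcal{LA}$-groupoid one obtains by differentiating $D$ in one direction; the relevant differentiation of the vertical groupoids $D \rightrightarrows H$ and $V \rightrightarrows M$ has base algebroid $AV$ and core algebroid $AC = \mathrm{Lie}(C \rightrightarrows M)$, and tracing the Coste--Dazord--Weinstein source and target of $T^*D$ through this duality shows them to be homomorphisms onto $A^*V \rightrightarrows A^*C$ and $A^*H \rightrightarrows A^*C$, which do share the base $A^*C$; this bookkeeping is carried out in~\cite{M}. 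For the core, the core of $T^*D$ consists of those covectors whose sources in both symplectic groupoid structures are units, and one checks this is a conormal-type subbundle of $T^*D$ sitting over a copy of the core $C \subseteq D$; realizing it as a symplectic reduction of $T^*D$ and using that such reductions of $T^*D$ by conormal bundles or restricted cotangent bundles are again cotangent bundles (Example~\ref{ex:leaf}) identifies the core with $T^*C$.

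The step I expect to be the main obstacle is not the multiplications and inversions, which fall out cleanly from functoriality of $T^*$, but the handling of the source, target, and unit maps together with the identification of the side groupoids: because $A^*G$ is not a cotangent bundle these maps are not literally cotangent lifts, and matching them with the groupoid structures $A^*V \rightrightarrows A^*C$ and $A^*H \rightrightarrows A^*C$---verifying in particular that both base groupoids genuinely share the base $A^*C$, and that the core emerges as $T^*C$ canonically rather than only up to a noncanonical symplectomorphism---is the real geometric content, and is essentially the $\mathcal{LA}$-groupoid theory underlying Mackenzie's theorem.
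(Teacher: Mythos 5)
The paper offers no proof of this statement: Theorem~\ref{thrm:ctdbl} is quoted from Mackenzie~\cite{M} and used as a black box, so there is no in-paper argument to compare yours against; the comparison can only be with Mackenzie's own proof, whose overall route (apply the Coste--Dazord--Weinstein construction to each of the two groupoid structures on $D$, check the double-groupoid compatibilities, identify the sides and the core) your outline does follow. One point worth making explicit: the bases your construction produces are the duals of the Lie algebroids of the \emph{big} groupoids $D \rightrightarrows V$ and $D \rightrightarrows H$ --- vector bundles over $V$ and $H$ of total dimension $\dim D$, exactly as your dimension count uses --- and this is what matches the paper's Examples~\ref{ex:main} and~\ref{ex:inertia} (sides $G$ and $A^*$ for $T^*G$, sides $T^*G$ and $\overline{A^*}\times A^*$ for $\overline{T^*G}\times T^*G$); reconciling these with the description of the sides given in the statement, and exhibiting on them the groupoid structures over the common base $A^*C$, is part of what has to be proved rather than something that can be read off.

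That is also where the genuine gap lies. Your functoriality argument legitimately handles the multiplications and inversions, since those structure maps of the Coste--Dazord--Weinstein groupoid are (up to sign conventions) cotangent lifts of the corresponding relations on $D$, and applying $T^*:\SRel\to\Symp$ to the interchange-type identities of Definition~\ref{defn:dbl-grpd} is sound. But the remaining axioms --- that the source, target and unit of each structure on $T^*D$ are morphisms for the other, that the two side groupoids genuinely share the base $A^*C$, that the double source map of $T^*D$ is a surjective submersion, and that the core is canonically $T^*C$ --- concern maps that are not cotangent lifts (as you yourself note), and your proposal settles them by citing the $\mathcal{LA}$-groupoid bookkeeping ``carried out in~\cite{M}.'' Since \cite{M} is precisely the theorem being proved, this is circular: what you have is an accurate outline of Mackenzie's strategy, not a proof. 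To close it you would need to write the source and target of each cotangent structure explicitly (in the style of the computation this paper performs inside the proof of Theorem~\ref{thrm:lie-hopf}, where the source of $T^*D \rightrightarrows A^*H$ is described via left translation and the identification of $A^*H$ with the conormal bundle $N^*H \subset T^*D$), verify the homomorphism properties directly, and carry out the identification of the core with $T^*C$ rather than asserting it from reduction in general terms.
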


In this paper, in addition to the descriptions of the top and left cotangent groupoids, we will only need to use the units of the right and bottom groupoids: the unit of $A^*V \rightrightarrows A^*C$ is  induced by that of $T^*D \rightrightarrows A^*H$, and so comes from the identification of $A^*H$ with the conormal bundle $N^*H \subset T^*D$, and similarly the unit of $A^*H \rightrightarrows A^*C$ comes from the identification of $A^*V$ with $N^*V$. Example \ref{ex:main} arises from applying this theorem to the double groupoid
\begin{center}
\begin{tikzpicture}[>=angle 90]
	\node (LL) at (0,0) {$M$};
	\node (LR) at (2,0) {$M,$};
	\node (UL) at (0,2) {$G$};
	\node (UR) at (2,2) {$G$};

	\tikzset{font=\scriptsize};
	\draw[->] (.4,.07) -- (1.6,.07);
	\draw[->] (.4,-.07) -- (1.6,-.07);
	
	\draw[->] (-.07,1.6) -- (-.07,.4);
	\draw[->] (.07,1.6) -- (.07,.4);
	
	\draw[->] (.4,2.07) -- (1.7,2.07);
	\draw[->] (.4,2-.07) -- (1.7,2-.07);
	
	\draw[->] (2-.07,1.6) -- (2-.07,.4);
	\draw[->] (2+.07,1.6) -- (2+.07,.4);
\end{tikzpicture}
\end{center}
and Example \ref{ex:inertia} arises from the double groupoid
\begin{center}
\begin{tikzpicture}[>=angle 90]
	\node (LL) at (0,0) {$M \times M$};
	\node (LR) at (3,0) {$M.$};
	\node (UL) at (0,2) {$G \times G$};
	\node (UR) at (3,2) {$G$};

	\tikzset{font=\scriptsize};
	\draw[->] (.9,.07) -- (2.6,.07);
	\draw[->] (.9,-.07) -- (2.6,-.07);
	
	\draw[->] (-.07,1.6) -- (-.07,.4);
	\draw[->] (.07,1.6) -- (.07,.4);
	
	\draw[->] (.9,2.07) -- (2.5,2.07);
	\draw[->] (.9,2-.07) -- (2.5,2-.07);
	
	\draw[->] (3-.07,1.6) -- (3-.07,.4);
	\draw[->] (3.07,1.6) -- (3.07,.4);
\end{tikzpicture}
\end{center}

\begin{rmk}
Given a symplectic double groupoid $D$, the two groupoid structures on $D$ can be thought of as giving compatible monoid and comonoid structures in the symplectic category, resulting in a type of ``bialgebra''-object. However, as alluded to previously in Example~\ref{ex:hopf}, using the two groupoid inverses on $D$ to produce a type of antipode does \emph{not} in fact produce a Hopf algebra object in the symplectic category; obtaining such an object requires the use of a ``base'' more general than a point, which is what leads to our notion of a symplectic hopfoid where the correct ``base'' is given by the core.
\end{rmk}

\subsection{Realizing the Core via Reduction}\label{sec:reduc}
We now describe a procedure for producing the core of a symplectic double groupoid, and indeed the symplectic groupoid structure on the core, via symplectic reduction. For the remainder of what follows, we assume that the target and source fibers of the groupoids we consider are all connected.

Let $D$ be a symplectic double groupoid. The unit submanifold $1^V M$ of the groupoid structure on $V$ is coisotropic in $V$ since $V \rightrightarrows M$ is a Poisson groupoid. Hence its preimage
\[
X := \widetilde{s}_{H}^{-1}(1^{V} M) \subseteq D
\]
under the source of the top groupoid (which is a Poisson map) is a coisotropic submanifold of $D$. In square notation $X$ consists of those elements of the form:
\begin{center}
\begin{tikzpicture}[scale=.75]
	\node at (1,1) {$a$};

	\tikzset{font=\scriptsize};
	\draw (0,0) rectangle (2,2);
	\node at (-.6,1) {$\wt t_V(a)$};
	\node at (1,2.4) {$1^V_m$};
	\node at (2.6,1) {$\wt s_V(a)$};
	\node at (1,-.4) {$\wt t_{H}(a)$};
\end{tikzpicture}
\end{center}
Note that the core $C$ of $D$ sits inside of $X$. Similarly, the same construction using the bottom and left groupoids in~\ref{defn:dbl-grpd} produces the coisotropic submanifold
\[
Y := \wt s_V^{-1}(1^{H}M) \subseteq D
\]
of $D$, which also contains the core.

\begin{ex}
Consider the symplectic double groupoid of Example \autoref{ex:main}. The submanifold $X$ in this case is the restricted cotangent bundle $T^*G|_M \subseteq T^*G$. As described in Example~\ref{ex:leaf}, the reduction of this coisotropic submanifold by its characteristic foliation is the core $T^*M$, and the resulting reduction relation
\[
T^*G \red T^*M
\] 
is the transpose of the cotangent lift $T^*e$ of the unit embedding $e: M \to G$ of the groupoid $G \rightrightarrows M$.

Now, performing this procedure using the left groupoid is more interesting. Recall that the source map $\wt s$  ($\wt s_V$ in the notation we have been using for general symplectic double groupoids) of the groupoid $T^*G \rightrightarrows A^*$ is determined by the requirement that
\[
\wt s(g,\xi)\big|_{\ker (dt)_{e(s(g))}} = (dL_g)_{e(s(g))}^*\left(\xi\big|_{\ker (dt)_g}\right)
\]
where $L_g: t^{-1}(s(g)) \to t^{-1}(t(g))$ is the left groupoid multiplication by $g$ and we identify $A^* \cong N^*M$. Here $t$ and $s$ are the target and source maps of $G \rightrightarrows M$ respectively. Thus, we see that $(g,\xi)$ maps to a unit $(s(g),0) \in A^*$ of the bottom groupoid (note that the unit of the bottom groupoid is given by the zero section $0_M$ of $A^*$) if and only if $\xi|_{\ker (dt)_g} = 0$, and hence equivalently if and only if $\xi$ is in the image of $(dt)_g^*$. 

Therefore, the coisotropic submanifold $Y := \wt s^{-1}(0_M)$ of $T^*G$ is isomorphic to $N^*\F_t$, where $\F_t$ is the foliation of $G$ given by the $t$-fibers of the groupoid $G \rightrightarrows M$. According to Example~\ref{ex:leaf}, the reduction of this is also the core $T^*M$, and the reduction relation $T^*G \red T^*M$ turns out to be the cotangent lift $T^*t$. 
\end{ex}

These results generalize in the following way to an arbitrary symplectic double groupoid. We first have the following explicit description of the characteristic foliation $X^\perp$ of $X := \widetilde{s}_{H}^{-1}(1^V M)$:

\begin{lem}\label{lem:fol}
The leaf of the foliation $X^\perp$ containing $a \in X$ is given by
\begin{equation}\label{char-fol}
X^\perp_a := \{ a\ \wt\circ_{H}\,\wt 1^V_\lambda\ |\ \lambda \in t_{H}^{-1}(m)\},
\end{equation}
where $m = s_V(\wt s_{H}(a))$.
\end{lem}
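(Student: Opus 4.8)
The plan is to verify directly that the right-hand side of~\eqref{char-fol} is an integral submanifold of the characteristic distribution $\ker\omega|_X$ through $a$, and then invoke connectedness of the $t_H$-fibers to conclude it is the full leaf. First I would check that the set $X^\perp_a$ is well-defined and contained in $X$: for $\lambda \in t_H^{-1}(m)$ with $m = s_V(\wt s_H(a))$, the element $\wt 1^V_\lambda$ is the vertical unit over $\lambda$, and the identities relating the structure maps (e.g. $\wt s_H(\wt 1^V_\lambda) = 1^V_{s_H(\lambda)}$ and the fact that $\wt s_H$ is a groupoid homomorphism $D \rightrightarrows V \to H \rightrightarrows M$ over $s_H \colon H \to M$) show that $a$ and $\wt 1^V_\lambda$ are composable in the top (horizontal over $V$, i.e. $\wt\circ_H$) groupoid precisely when $t_H(\lambda) = s_V(\wt s_H(a)) = m$, and that the composite again lies in $X = \wt s_H^{-1}(1^V M)$ because $\wt s_H(a\,\wt\circ_H\,\wt 1^V_\lambda) = \wt s_H(a)\circ_H \wt s_H(\wt 1^V_\lambda) \in 1^V M$. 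So $X^\perp_a$ is an embedded submanifold of $X$ diffeomorphic (via $\lambda \mapsto a\,\wt\circ_H\,\wt 1^V_\lambda$, using left-translation in the top groupoid) to the $t_H$-fiber $t_H^{-1}(m)$, which is connected by the standing assumption.

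Next I would compute the dimension and show $T_a X^\perp_a \subseteq \ker\omega|_X$. The tangent space to $X^\perp_a$ at $a$ is the image under $d(\wt m_H)$ of the tangent space to the vertical-unit submanifold $\{\wt 1^V_\lambda\}$ along $t_H^{-1}(m)$, which has dimension $\dim t_H^{-1}(m) = \dim H - \dim M$. On the other hand, since $X = \wt s_H^{-1}(1^V M)$ is the preimage of the coisotropic $1^V M \subseteq V$ under the Poisson submersion $\wt s_H$, its characteristic distribution has rank $\operatorname{codim} X$ in $V$-terms: $\dim X - \dim(X^\perp\text{-leaf space})$. A clean way to organize this is to note that $X$ being the preimage of a coisotropic under a Poisson map means $X^\perp$ is spanned by the Hamiltonian vector fields of functions pulled back from $V$ that vanish on $1^V M$ — equivalently, $X^\perp_a = (\text{fiber of } \ker d\wt s_H \text{ that is } \omega\text{-orthogonal complement stuff})$. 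Rather than push the abstract argument, I would verify the inclusion concretely: a curve in $X^\perp_a$ has the form $c(\varepsilon) = a\,\wt\circ_H\,\wt 1^V_{\lambda(\varepsilon)}$, and I would show its velocity pairs to zero under $\omega$ with every vector in $T_a X$. Here the key input is that the top groupoid $D \rightrightarrows V$ is a \emph{symplectic} groupoid, so $\wt m_H{}^*\omega = pr_1^*\omega + pr_2^*\omega$ on $D \times_V D$; differentiating the multiplication-by-$a$ map in the second slot and using that $\omega$ restricted to the unit submanifold $\wt 1^V V$ (a Lagrangian) vanishes, one gets that $d(\wt m_H)$ applied to a tangent vector of $\wt 1^V V$ lands in the $\omega$-orthogonal of the appropriate subspace. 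Combined with the computation that $\dim X^\perp_a$ equals exactly the rank of the characteristic distribution of $X$, this forces $T_a X^\perp_a = \ker\omega|_X$ at $a$, so $X^\perp_a$ is an integral manifold.

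Finally, since $X^\perp_a$ is a connected integral manifold of the (integrable) characteristic distribution containing $a$, it is contained in the leaf through $a$; and since it has the full dimension of the leaf, it is open in the leaf. To upgrade ``open'' to ``all of'', I would use that $X^\perp_a \cong t_H^{-1}(m)$ is also closed in $X$ (as $t_H$-fibers are closed and the left-translation map is a closed embedding) — or alternatively that the leaf is connected and $X^\perp_a$ is both open and closed in it — to conclude $X^\perp_a$ is exactly the leaf.

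\medskip

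I expect the main obstacle to be the dimension-count / $\omega$-orthogonality step: carefully identifying the rank of the characteristic distribution of $X = \wt s_H^{-1}(1^V M)$ and matching it with $\dim H - \dim M$, while simultaneously showing the explicit tangent vectors $d(\wt m_H)(T\wt 1^V V)$ are isotropic and fill out the whole characteristic space. This is where the interplay between the Poisson structure on $V$, the symplectic groupoid axiom for $D \rightrightarrows V$, and the Lagrangian-ness of the unit section $\wt 1^V V \subseteq D$ all has to be used at once; everything before and after is bookkeeping with the structure-map identities and connectedness.
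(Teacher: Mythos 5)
Your overall shape of argument (identify the tangent space of the claimed leaf, match dimensions, prove isotropy against $T_aX$, then use connectedness) is the same as the paper's, but your proposed mechanism for the key orthogonality step --- multiplicativity of $\omega$ for the top groupoid plus Lagrangianity of the vertical unit section --- is genuinely different from the paper's, and it can be made to work. As written, however, the crux is missing: ``lands in the $\omega$-orthogonal of the appropriate subspace'' and ``$\omega$-orthogonal complement stuff'' are placeholders for exactly the step that carries the content (you flag it yourself as the main obstacle), and the dimension count is left incomplete. To close it along your own lines: for $Z \in T_aX$, differentiate $a = \wt m_H\bigl(a, \wt 1^H_{\wt s_H(a)}\bigr)$ to get $Z = (d\wt m_H)\bigl(Z, (d\wt 1^H)_{1^V_m}(d\wt s_H)_aZ\bigr)$, and write your curve's velocity as $Y = (d\wt m_H)\bigl(0_a, (d\wt 1^V)_{1^H_m}W\bigr)$ with $W \in \ker(dt_H)_{1^H_m}$; both pairs are tangent to $D \times_V D$ (note $(d\wt 1^V)W \in \ker d\wt t_H$ since $\wt t_H \circ \wt 1^V = 1^V \circ t_H$). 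Multiplicativity $\wt m_H^*\omega = pr_1^*\omega + pr_2^*\omega$ then yields
\[
\omega_a(Y,Z) \;=\; \omega_{u}\bigl((d\wt 1^V)_{1^H_m}W,\ (d\wt 1^H)_{1^V_m}(d\wt s_H)_aZ\bigr), \qquad u := \wt 1^H_{1^V_m},
\]
and since $Z \in T_aX$ means $(d\wt s_H)_aZ = (d1^V)_m z$ for some $z \in T_mM$, the identity $\wt 1^H \circ 1^V = \wt 1^V \circ 1^H$ rewrites the second argument as $(d\wt 1^V)_{1^H_m}(d1^H)_m z$; both arguments are then tangent to the Lagrangian $\wt 1^V(H)$ --- the unit section of the \emph{left} groupoid $D \rightrightarrows H$, not ``$\wt 1^V V$'' as you wrote --- so the pairing vanishes. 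For the dimension count, the characteristic distribution has rank $\operatorname{codim} X = \dim V - \dim M$ while $\dim X^\perp_a = \dim H - \dim M$; these agree precisely because $V$ and $H$ are both Lagrangian in $D$, hence both of dimension $\tfrac12\dim D$. This observation is the missing ingredient in your ``matching'' step.

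For comparison, the paper proves the same equality $T_a(X^\perp_a) = (T_aX)^\perp$ by a different device: it first treats the case where $a$ is a unit of the top groupoid, splitting $Z$ into a component tangent to the unit section and a component in $\ker d\wt s_H$, using the symplectic orthogonality of source and target fibers together with Lagrangianity of $\wt 1^V: H \to D$, and then transports the conclusion to general $a$ by left translation along a local Lagrangian bisection through $a$, which is a symplectomorphism preserving $X$. Your multiplicativity argument, once completed as above, handles general $a$ directly and dispenses with both the unit-case reduction and the bisection (the term $\omega_a(0_a, Z) = 0$ absorbs what the paper handles via source--target orthogonality); the price is the bookkeeping needed to check that the pairs fed into $d\wt m_H$ are genuinely tangent to the fiber product. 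So: viable and in fact slightly cleaner than the paper's route, but not yet a proof until the displayed computation and the $\dim V = \dim H$ observation are supplied.
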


\begin{proof}
To show that the characteristic foliation is as claimed, we must show that
\[
T_a (X^\perp_a) = (T_a X)^\perp
\]
where $(T_a X)^\perp$ is the symplectic orthogonal of $T_a X$ in $T_aD$. A dimension count shows that these two spaces have the same dimension. Indeed, for $m = \dim M, v = \dim V$, and $h = \dim H$, using the fact that each $\widetilde s_H$-fiber has dimension $\dim D - v$ we have:
\[ \dim(T_a X) = \dim D-v+m, \text{ so } \dim(T_aX)^\perp = \dim D-(\dim D-v+m) = v-m, \]
and $\dim (X_a^\perp) = h-m$ since elements of $X_a^\perp$ are parametrized by $t_H^{-1}(m)$; since $V$ and $H$ are both Lagrangian submanifolds of $D$, $v=h$ and the claim follows. Thus we only need to show that the former space in the claimed equality above is contained in the latter. Thus we must show that if $Y \in T_a(X^\perp_a)$, then
\[
\omega_a(Y,Z) = 0
\]
for any $Z \in T_a X$ where $\omega$ is the symplectic form on $D$. Since
\[
T_a X = (d\wt s_{H})_a^{-1}\left(T_{1^V_m}(1^V M)\right),
\]
this means that $Z$ satisfies $(d\wt s_{H})_a Z \in T_{1^V_m}(1^V M)$.

We use the following explicit description of $T_a(X^\perp_a)$. The elements of $X_a^\perp$ are parametrized by $t_{H}^{-1}(m)$, so we have that $X^\perp_a$ is the image of the map
\[
\ell_{H}^{-1}(m) \to D
\]
given by the composition
\[
\lambda \mapsto \wt 1^V_\lambda \mapsto \wt L_a^{H} (\wt 1^V_\lambda)
\]
where $\wt L_a^{H}$ is left-multiplication by $a$ in the top groupoid. Taking differentials at $\lambda = 1^{H}_m$ then gives the explicit description of $T_a(X^\perp_a)$ we want; in particular, we can write $Y \in T_a(X^\perp_a)$ as
\begin{equation}\label{eq:tang-dist}
Y = (d \wt L_a^{H})_{\wt 1^V(1^{H}_m)}(d \wt 1^V)_{1^{H}_m} W
\end{equation}
for some $W \in \ker (dt_{H})_{1^{H}_m}$.

First we consider the case where $a$ is a unit of the top groupoid structure, so suppose that $a = \wt 1^{H}(1^V_m)$. In this case $\wt L_a^H$ is simply the identity, so the expression for $Y$ above becomes
\[
Y = (d \wt 1^V)_{1^{H}_m} W.
\]
Using the splitting
\[
TD|_V = TV \oplus \ker d\wt s_{H}|_V
\]
we can write $Z \in T_aX$ as
\[
Z = (d\wt 1^{H})_{1^V_m}(d\wt s_{H})_a Z + [Z- (d\wt 1^{H})_{1^V_m}(d\wt s_{H})_a Z],
\]
where the first term is tangent to the units of the top groupoid and the second term is in $\ker (d\wt s_{H})_a$, and so tangent to the $\wt s_H$-fiber through $a$. Then we have
\[
\omega_a (Y,Z) = \omega_a ((d \wt 1^V)_{1^{H}_m} W,(d\wt 1^{H})_{1^V_m}(d\wt s_{H})_a Z) +  \omega_a ((d \wt 1^V)_{1^{H}_m} W,[V - (d\wt 1^{H})_{1^V_m}(d\wt s_{H})_a Z]).
\]
Since $W \in \ker(dt_{H})_{1^{H}_m}$, one can check that $Y = (d \wt 1^V)_{1^{H}_m} W$ is tangent to the $\wt t_H$-fiber through $a$, so the second term above vanishes since source and target fibers of a symplectic groupoid are symplectically orthogonal to one another (see~\cite{CDW}). By the defining property of $Z$, we have $(d\wt s_{H})_a Z = (d 1^V)_m z$ for some $z \in T_m M$. Using this and the fact that $\wt 1^{H} \circ 1^V = \wt 1^V \circ 1^{H}$ (which follows from the double groupoid compatibilities), we can write the first term above as
\[
\omega_a ((d \wt 1^V)_{1^{H}_m} W,(d \wt 1^V)_{1^{H}_m}(d 1^{H})_m z),
\]
which vanishes since the embedding $\wt 1^V: H \to D$ is Lagrangian. Thus $\omega_a(Y,Z) = 0$ as was to be shown. 

Now, for the general case, given $a \in X$ set $p := \wt 1_H(1^V_m)$ and choose a local Lagrangian bisection $B$ containing $a$ of the top groupoid $D \rightrightarrows V$, which is known to always exist~\cite{CDW}. Recall that this is a Lagrangian submanifold of $D$ such that the restrictions
\[
\wt t_H|_B: B \to U \text{ and } \wt s_H|_B: B \to U'
\]
are diffeomorphisms of $B$ onto open subsets $U$ and $U'$ of $V$. This then defines a left-multiplication $L_B: \wt t_H^{-1}(U') \to \wt t_H^{-1}(U)$ between the open submanifolds $\wt t_H^{-1}(U')$ and $\wt t_H^{-1}(U)$ of $D$ by
\[
L_B(b) = (\wt s_H|_B)^{-1}(\wt t_H(b))\, \wt\circ_H\, b,
\]
which sends the leaf $X^\perp_p$ of~(\ref{char-fol}) to $X^\perp_a$. The map $L_B$ is actually a symplectomorphism and so sends the symplectic orthogonal $(T_p X)^\perp$ at $p$ to $(T_a X)^\perp$. Since $(T_{p} X)^\perp = T_{p}(X_{p}^\perp)$ by what we showed previously and $T_{p}(X_{p}^\perp)$ is sent to $T_a(X^\perp_a)$, we have our result.
\end{proof}

\begin{rmk}
Note that it is the entire double groupoid structure which makes this explicit description possible. As a contrast, given a symplectic realization $f: S \to P$ of a Poisson manifold $P$ and a coisotropic submanifold $N \subseteq P$, the characteristic foliation on $f^{-1}(N) \subseteq S$ is not easily described.
\end{rmk}

In square notation, the leaf of the characteristic foliation through $a$ consists of elements of the form
\begin{center}
\begin{tikzpicture}[scale=.85]
	\node at (1,1) {$a\,\,\wt\circ_{H}\wt 1^V_\lambda$};

	\tikzset{font=\scriptsize};
	\draw (0,0) rectangle (2,2);
	\node at (-1.2,1) {$\wt t_V(a) \circ_{H} \lambda$};
	\node at (1,2.4) {$1^V_{s_{H}(\lambda)}$};
	\node at (3.2,1) {$\wt s_V(a) \circ_{H} \lambda$};
	\node at (1,-.4) {$\wt t_{H}(a)$};
\end{tikzpicture}
\end{center}
where $\lambda \in t_{H}^{-1}(m)$. Switching the roles of the top and left groupoids in~\ref{defn:dbl-grpd} , we get that the leaves of the characteristic foliation of $Y := \wt s_V^{-1}(1^{H}M)$ are given by
\[
Y^\perp_a := \{a\ \wt\circ_V\,\wt 1^{H}_{\lambda}\ |\ \lambda \in t_V^{-1}(m)\},
\]
where $m = s_{H}(\wt s_V(a))$

Returning to the characteristic leaves of $X$, note that such a leaf intersects the core in exactly one point, since there is only one choice of $\lambda$ which will make $\wt s_V(a) \circ_{H} \lambda$ a unit for the left groupoid structure, namely $\lambda = i_{H}(\wt s_V(a))$. Thus, the core forms a cross section to the characteristic foliation of $X$ and we conclude that the leaf space $X/X^\perp$ of this characteristic foliation can be identified with the core. This leaf space is then naturally symplectic and we have:

\begin{prop}
The symplectic structure on the core obtained via the reduction above agrees with the one $C$ inherits as a symplectic submanifold of $D$.
\end{prop}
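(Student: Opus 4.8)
The plan is to derive the statement as a purely formal consequence of the defining property of the reduced symplectic form, using the fact --- just established --- that the core $C$ is a cross section to the characteristic foliation $X^\perp$ of $X := \wt s_H^{-1}(1^V M)$. Write $\iota\colon X \hookrightarrow D$ and $j\colon C \hookrightarrow X$ for the inclusions, $\pi\colon X \to X/X^\perp$ for the quotient by the characteristic foliation, and $\omega$ for the symplectic form on $D$. By the construction recalled in the Example on symplectic reduction, the reduced form $\bar\omega$ on $X/X^\perp$ is the unique two-form satisfying $\pi^*\bar\omega = \iota^*\omega$; and $X$ is indeed coisotropic (as noted when it was introduced, since $V \rightrightarrows M$ is a Poisson groupoid and $\wt s_H$ is a Poisson map), so $\bar\omega$ exists.

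Because the core meets each leaf of $X^\perp$ in exactly one point --- the point corresponding to $\lambda = i_H(\wt s_V(a))$ picked out in the discussion following Lemma~\ref{lem:fol} --- the composite $\psi := \pi \circ j\colon C \to X/X^\perp$ is a diffeomorphism, and this is precisely the identification of the leaf space with the core used above. Under this identification, the symplectic structure the core acquires ``via reduction'' is $\psi^*\bar\omega$, while the structure it inherits as a submanifold of $D$ is $(\iota\circ j)^*\omega$. One then simply chases pullbacks:
\[
\psi^*\bar\omega = (\pi\circ j)^*\bar\omega = j^*\pi^*\bar\omega = j^*\iota^*\omega = (\iota\circ j)^*\omega,
\]
which is the asserted equality. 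In particular $\psi$ is a symplectomorphism, which together with Mackenzie's theorem that $(\iota\circ j)^*\omega$ is nondegenerate re-confirms that the leaf space is symplectic.

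There is essentially no obstacle here: all of the real work has already gone into Lemma~\ref{lem:fol} and the cross-section observation, and what remains is the one-line diagram chase above. The only points that warrant a sentence of care are the hypotheses underlying each arrow --- that $X$ is coisotropic so that $\bar\omega$ is defined; that the leaf space $X/X^\perp$ carries a smooth structure with $\pi$ a submersion, which we are assuming in order to speak of the reduction at all; and that $C$ is genuinely a cross section, transverse to the foliation, which is the content of the preceding discussion. The corresponding statement for $Y := \wt s_V^{-1}(1^H M)$ follows verbatim after exchanging the roles of the top and left groupoids.
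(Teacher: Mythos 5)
Your proposal is correct and follows essentially the same argument as the paper: identify the core as a cross section of the characteristic foliation, use the defining property $\pi^*\omega_{red} = \iota^*\omega$ of the reduced form, and conclude by a one-line pullback chase using that the inclusion of $C$ into $X$ is (up to the identification of the leaf space with $C$) a section of the projection. The only cosmetic difference is that you keep the leaf space $X/X^\perp$ abstract and pull back along the diffeomorphism $\psi = \pi\circ j$, whereas the paper identifies the leaf space with $C$ from the outset; the computation is the same.
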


\begin{proof}
Let $i: C \to D$ be the inclusion of the core into $D$, and let $\pi: X \to C$ be the surjective submersion sending $a \in X$ to the characteristic leaf containing it, where we have identified the leaf space $X/X^\perp$ with $C$ in the above manner. Let $k: C \to X$ be the inclusion of the core into $X$; this is a section of $\pi$. Finally, let $j: X \to D$ be the inclusion of $X$ into $D$.

The symplectic form $\omega_C$ on $C$ obtained via reduction is characterized by the property that $j^*\omega = \pi^*\omega_C$. Since $i = j \circ k$, we have
\begin{align*}
i^*\omega &= (j \circ k)^*\omega \\
&= k^*(j^*\omega) \\
&= k^*(\pi^*\omega_C) \\
&= (\pi \circ k)^*\omega_C,
\end{align*}
which equals $\omega_C$ since $k$ is a section of $\pi$. This proves the claim.
\end{proof}

Explicitly, the reduction relation $D \red C$, which we will call $\mathbf{e}^t$ for reasons to be made clear later, obtained by reducing $X$ is given by
\[
\mathbf{e}^t: a \mapsto a\, \wt\circ_{H} \wt 1^V_{i_{H}(\wt s_V(a))} \text{ for } a \in \wt s_{H}^{-1}(1^V M) = X.
\]
For future reference, the transposed relation $\mathbf{e}: C \cored D$ (which is a coreduction) is given by
\begin{equation}\label{unit}
\mathbf{e}: a \mapsto a \wt\circ_{H} \wt 1^V_\lambda, \text{ for $\lambda \in H$ such that } t_{H}(\lambda) = s_V(\wt s_{H}(s)).
\end{equation}

Similarly, the reduction of the coisotropic $Y = \wt s_V^{-1}(1^{H}M) \subseteq D$ can also be identified with the core via similar maps, after simply exchanging the roles of $V$ and $H$. The reduction relation $D \red C$ obtained by reducing $Y$ will be called $\mathbf{t}$ and is explicitly given by
\begin{equation}\label{target}
\mathbf{t}: a \mapsto a\,\wt\circ_V\,\wt 1^{H}_{i_{V}(\wt s_{H}(a))} \text{ for } a \in \wt s_V^{-1}(1^{H} M) = Y.
\end{equation}

Similar results hold for the preimages of units under the target maps. To be clear, let $Z$ now be $\wt t_{H}^{-1}(1^V M)$, the preimage of the units of $V$ under the target map of the top groupoid. This is again coisotropic in $D$, and the leaf of the characteristic foliation through a point $a \in Z$ is now given by
\[
\{\wt 1^V_{\lambda}\,\wt\circ_{H}\,a\ |\ \lambda \in s_{H}^{-1}(m)\}
\]
where $m = t_V(\wt t_{H}(a))$. Similar to the above, we can now easily identify the reduction of $Z$ with the set of elements of $D$ of the form
\begin{center}
\begin{tikzpicture}[scale=.75]
	\node at (1,1) {$a$};

	\tikzset{font=\scriptsize};
	\draw (0,0) rectangle (2,2);
	\node at (-.6,1) {$1^{H}_m$};
	\node at (1,2.4) {$\wt s_{H}(a)$};
	\node at (2.7,1) {$\wt s_V(a)$};
	\node at (1,-.4) {$1^V_m$};
\end{tikzpicture}
\end{center}
which we might call the ``target-core'' $C_t$ of $D$ to distinguish it from the ``source-core'' $C_s\ (:= C)$ previously defined. However, the target-core $C_t$ can be identified with $C_s$ using the composition of the two groupoid inverses on $D$:
\[
\wt i_V \circ \wt i_{H}: C_t \to C_s,
\]
so we again get that the reduction of $Z$ is symplectomorphic to the core $C$. Considering the transpose of $D$, we find that the same is true for the preimage of the units of $H$ under the target of the left groupoid.

In summary, we have shown:
\begin{thrm}\label{thrm:core}
Let $D$ be a symplectic double groupoid with core $C$. Then the reductions of the coisotropic submanifolds
\[
\wt s_{H}^{-1}(1^V M),\ \wt s_V^{-1}(1^{H}M),\ \wt t_{H}^{-1}(1^V M), \text{ and } \wt t_V^{-1}(1^{H}M)
\]
of $D$ are symplectomorphic to $C$. 
\end{thrm}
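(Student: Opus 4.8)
The plan is to reduce all four cases to that of $X := \wt s_H^{-1}(1^V M)$, for which Lemma~\ref{lem:fol} has already supplied the essential input, and then obtain the other three by the evident symmetries of the double groupoid. First I would record that $X$ is coisotropic: the unit submanifold $1^V M$ is coisotropic in $V$ because $V \rightrightarrows M$ is a Poisson groupoid, and $\wt s_H$ is a Poisson map since it is the source of the symplectic groupoid $D \rightrightarrows V$, so the preimage of a coisotropic submanifold under $\wt s_H$ is coisotropic. Lemma~\ref{lem:fol} then identifies the characteristic leaf through $a \in X$ as $X^\perp_a = \{\, a\,\wt\circ_H\,\wt 1^V_\lambda \mid \lambda \in t_H^{-1}(m)\,\}$ with $m = s_V(\wt s_H(a))$.

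The crux is to show that the core $C$ is a \emph{cross-section} to this foliation. An element $a\,\wt\circ_H\,\wt 1^V_\lambda$ has top side $1^V_{s_H(\lambda)}$, which is automatically a unit for the right groupoid since the leaf stays inside $X$, and right side $\wt s_V(a)\circ_H\lambda$; it lies in $C$ precisely when this right side is a unit of the bottom groupoid $H \rightrightarrows M$, and since $\wt s_V(a) \in H$ this forces the unique choice $\lambda = i_H(\wt s_V(a))$. Hence every characteristic leaf meets $C$ in exactly one point, so the leaf space $X/X^\perp$ is identified as a set with $C$, is smooth with the smooth structure transported from $C$, and the projection $X \to C$ is a surjective submersion; thus the reduction of $X$ is defined and equals $C$ as a manifold. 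It then remains to check that the reduced symplectic form on $C$ coincides with the one $C$ carries as a symplectic submanifold of $D$, which is exactly the content of the Proposition stated just before the theorem: with the inclusions $k\colon C\to X$, $j\colon X\to D$, $i\colon C\to D$ satisfying $i = j\circ k$, and $k$ a section of the reduction projection, the defining property $j^*\omega = \pi^*\omega_C$ yields $i^*\omega = \omega_C$.

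The remaining three cases follow formally. The submanifold $Y := \wt s_V^{-1}(1^H M)$ is obtained from $X$ by interchanging the roles of the top and left groupoids, i.e.\ by passing to the transposed double groupoid, so the same argument with $t_V^{-1}(m)$ in place of $t_H^{-1}(m)$ and the unique choice $\lambda = i_V(\wt s_H(a))$ shows its reduction is symplectomorphic to $C$. For $Z := \wt t_H^{-1}(1^V M)$ one runs the identical computation with left-multiplication in the top groupoid replaced by right-multiplication: the characteristic leaf through $a$ becomes $\{\,\wt 1^V_\lambda\,\wt\circ_H\,a \mid \lambda \in s_H^{-1}(m)\,\}$ with $m = t_V(\wt t_H(a))$, and the cross-section one extracts is the ``target-core'' $C_t$ of squares all of whose targets are units. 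Since the composite of the two groupoid inverses $\wt i_V\circ\wt i_H \colon C_t \to C$ is a symplectomorphism, being a composition of two anti-symplectomorphisms, the reduction of $Z$ is symplectomorphic to $C$; and $\wt t_V^{-1}(1^H M)$ is simply the ``$Z$'' of the transposed double groupoid.

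The one genuinely difficult step is the explicit determination of the characteristic foliation of $X$, namely Lemma~\ref{lem:fol}: as the remark following that lemma stresses, for a general symplectic realization and coisotropic submanifold the characteristic foliation of the preimage is not tractable, and here it is only the full double groupoid structure — Lagrangian bisections of the top groupoid, the symplectic orthogonality of source and target fibers, and the compatibility $\wt 1^H\circ 1^V = \wt 1^V\circ 1^H$ — that makes the description possible. Granting that lemma and the subsequent Proposition, the proof of the theorem itself is just the cross-section bookkeeping together with the two symmetry reductions, neither of which requires further computation.
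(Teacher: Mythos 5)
Your proposal is correct and follows essentially the same route as the paper: it invokes Lemma~\ref{lem:fol}, identifies the core as a cross-section to the characteristic foliation via the unique choice $\lambda = i_H(\wt s_V(a))$, uses the preceding Proposition to match the reduced form with the restricted form, and handles the remaining three submanifolds exactly as the paper does, via the transposed double groupoid and the symplectomorphism $\wt i_V \circ \wt i_H$ from the target-core to the core. No gaps to report.
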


\begin{ex}
Let us return to Example \ref{ex:main}. The target of the top groupoid is the same as the source, so the above reduction procedure produces the same reduction relation
\[
(T^*e)^t: T^*G \red T^*M
\]
as before. A similar computation to that carried out for the source of the left groupoid $T^*G \rightrightarrows A^*$ shows that the coisotropic submanifold $\wt \ell^{-1}(0_M)$ of $T^*G$ is $N^*\F_r$, where $\F_r$ is the foliation of $G$ given by the $r$-fibers of $G \rightrightarrows M$, and that the reduction relation
\[
T^*G \red T^*M
\]
obtained by reducing $N^*\F_r$ is then the cotangent lift $T^*r$.
\end{ex}

\begin{rmk}\label{gen-cot}
These examples show that in the case of the standard double groupoid structure on the cotangent bundle $T^*G$ of a Lie groupoid $G \rightrightarrows M$, the reduction relations arising from the various ways of realizing the core $T^*M$ are precisely the cotangent lifts of the structure maps of $G$. The reduction relations obtained by applying this procedure to a general cotangent double groupoid as in Theorem~\ref{thrm:ctdbl} can  be characterized as cotangent lifts of certain smooth relations.
\end{rmk}

\begin{ex}
Consider the double groupoid of Example~\ref{ex:inertia}. The coisotropic submanifold $X$ of $\overline{T^*G} \times T^*G$ is $\overline{T^*G} \times A^*$, whose reduction is $T^*G$ since $A^*$ is a Lagrangian submanifold of $T^*G$. The reduction relation $\overline{T^*G} \times T^*G \red T^*G$ is $id \times A^*$.

In the transposed double groupoid, the coisotropic submanifold $Y$ of $\overline{T^*G} \times T^*G$ is the fiber product of $\wt s: T^*G \to A^*$ with itself. Let $((g,\xi),(h,\eta))$ be an element of this fiber product. Then in particular $s(g) = s(h)$, so $gh^{-1}$ is defined under the multiplication on $G$. The leaf of the characteristic foliation of $Y$ containing $((g,\xi),(h,\eta))$ consists of elements of the form
$$((g,\xi)\circ(k,\omega),(h,\eta)\circ(k,\omega))$$
where $\circ$ is the product on $T^*G$ and $(k,\omega) \in T^*G$ satisfies $\wt t(k,\omega) = \wt s(g,\xi) = \wt s(h,\eta)$. The element of the core $T^*G$ associated with this leaf is
$$(g,\xi) \circ (h^{-1},(di)^*_{h^{-1}}\eta)$$
where $i$ is the inverse of $G$. As mentioned at the end of Remark~\ref{gen-cot}, the resulting reduction relation $\overline{T^*G} \times T^*G \red T^*G$ can be described as a cotangent lift; in particular, it is the cotangent lift of the \emph{smooth relation} $G \times G \to G$ given by $(g,h) \mapsto gh^{-1} \text{ for $g,h \in G$ such that } r(g)=r(h)$. (This smooth relation is related to the \emph{inertia} groupoid of $G$, which is the subgroupoid of $G$ consisting of loops, which are arrows whose target equals their source.)
\end{ex}

We thus have multiple ways of recovering the core of $D$ by reducing certain coisotropic submanifolds, and each such way produces a canonical relation $D \red C$.

\subsection{Symplectic Double Groups}
It is instructive to see the results of the above constructions in the special case of a symplectic double group. We see that we recover a result of Zakrzewski \cite{SZ2} concerning Hopf algebra objects in the symplectic category.

In a symplectic double group of the form:
\begin{center}
\begin{tikzpicture}[>=angle 90,scale=.85,baseline=(current  bounding  box.center)]
	\node at (0,0) {$P^*$};
	\node at (2,0) {$pt,$};
	\node at (0,2) {$S$};
	\node at (2,2) {$P$};

	\tikzset{font=\scriptsize};
	\draw[->] (.4,.08) -- (1.6,.08);
	\draw[->] (.4,-.08) -- (1.6,-.08);
	
	\draw[->] (-.08,1.6) -- (-.08,.4);
	\draw[->] (.08,1.6) -- (.08,.4);
	
	\draw[->] (.4,2+.08) -- (1.6,2+.08);
	\draw[->] (.4,2-.08) -- (1.6,2-.08);
	
	\draw[->] (2-.08,1.6) -- (2-.08,.4);
	\draw[->] (2+.08,1.6) -- (2+.08,.4);
\end{tikzpicture}
\end{center}
$P$ and $P^*$ are dual Poisson Lie groups. The coisotropic submanifold $X := \widetilde{s}_{P^*}^{-1}(1^{P} pt)$ of $S$ in this case is Lagrangian and can be identified with the embedding of $P^*$ into $S$. Being Lagrangian, its reduction is a point which is indeed the core of $S$. The reduction relation $S \red pt$ is given by $P^*$.

The transpose of this relation, together with the graph of the left groupoid product $S \times S \to S$ give $S$ the structure of a symplectic monoid. The canonical relation $S \red pt$ obtained by reducing $Y := \widetilde{s_P}^{-1}(1^{P^*}pt)$ is given by $P$, and along with the transpose of the top groupoid product gives $S$ the structure of a symplectic comonoid. The compatibility between these two structures can then be expressed by simply saying that $S$ together with these structures is a Hopf algebra object in $\Symp$, where the antipode $S \to S$ is the composition of the two groupoid inverses of $S$; this is then a generalization of Example \ref{ex:hopf}. In fact, we have the following:

\begin{thrm}[Zakrzewski, \cite{SZ2}]
$S$ endowed with two symplectic groupoid structures is a symplectic double group if and only if $S$ endowed with the aforementioned monoid and comonoid structures forms a Hopf algebra object in $\Symp$.
\end{thrm}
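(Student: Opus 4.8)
The plan is to route everything through Zakrzewski's characterization of symplectic groupoids as strongly positive symplectic $*$-monoids, stated above. Applying that characterization to the left (vertical) groupoid $S \rightrightarrows P^*$ presents it as a strongly positive symplectic $*$-monoid $(S, m, e, s_L)$, in which $m$ is the graph of the left multiplication $\wt\circ_V$, the unit $e: pt \to S$ is the Lagrangian embedding of $P^*$ in $S$, and $s_L = \wt i_V$ is the (anti-symplectic) left inverse; dually, applying it to the top (horizontal) groupoid $S \rightrightarrows P$ presents it as a strongly positive symplectic $*$-comonoid $(S, \Delta, \varepsilon, s_T)$, where $\Delta$ is the transpose of the graph of the top multiplication $\wt\circ_H$, the counit $\varepsilon: S \to pt$ is the Lagrangian embedding of $P$ in $S$, and $s_T = \wt i_H$. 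Since $s_L$ and $s_T$ are anti-symplectomorphisms, the composite $i := s_L \circ s_T = \wt i_V \circ \wt i_H$ is a symplectomorphism $S \to S$, and this is the candidate antipode. Thus \emph{both} sides of the asserted equivalence come already equipped with the monoid, comonoid, and antipode data, and what differs is only the compatibility imposed between the two groupoid structures. The content of the theorem is therefore that the double Lie groupoid compatibility (the structure maps of each of the two groupoids are homomorphisms for the other) is equivalent, given this data, to the family of commuting diagrams in $\Symp$ defining a Hopf algebra object.

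Each of the two implications is obtained by translating every diagram in $\Symp$ into a pointwise statement about the two multiplications on $S$ and matching it against the double-groupoid axioms, aided by the square notation and the $H$/$V$ labelling conventions. The bialgebra square relating $m$ and $\Delta$ via the shuffle $id \times \sigma \times id$ unwinds precisely into the interchange law between $\wt\circ_V$ and $\wt\circ_H$ — that composing a $2 \times 2$ array of squares vertically-then-horizontally agrees with horizontally-then-vertically — which is exactly the assertion that $\wt m_H$ is a homomorphism of the left groupoid (equivalently that $\wt m_V$ is one of the top groupoid); the strong transversality of all compositions is furnished by the hypothesis that the double source map $S \to P \times P^*$ is a surjective submersion. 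The compatibilities of $m$ with $\varepsilon$, of $\Delta$ with $e$, and of $e$ with $\varepsilon$ unwind into statements relating the top-unit submanifold $P \subseteq S$ (which is $\varepsilon$) and the left multiplication $\wt\circ_V$, relating $P^* \subseteq S$ (which is $e$) and the top multiplication, and asserting that $P$ and $P^*$ meet transversally in the single point forming the core of $S$. All of these follow from the double-groupoid axioms that the source, target, and unit maps of each groupoid are homomorphisms of the other, crucially using that the double base is a point so that $P$ and $P^*$ are honest groups, each with a single unit.

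The one genuinely delicate point, in both directions, is the pair of antipode conditions. Tracing $m \circ (id \times i) \circ \Delta$ through: an element $a$ is decomposed as a top-product $a = c\,\wt\circ_H\,c'$, the pair is sent to $(c, i(c'))$, and one must check that the resulting relation $S \to S$ coincides with $e \circ \varepsilon = P \times P^* \subseteq S \times S$. Substituting $i = \wt i_V \circ \wt i_H$ and using the homomorphism identities of the form $t_V(\wt s_H(a)) = s_H(\wt t_V(a))$, the requirement that $c$ and $i(c')$ be left-composable forces $a$ to lie in $P$, while — because $P^*$ is an honest \emph{group} and not merely a groupoid — the products $c\,\wt\circ_V\,i(c')$ then sweep out all of $P^*$; this yields the claimed equality, and the second antipode condition is symmetric. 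Read in reverse, these computations show that the antipode of a Hopf algebra object assembled from these $m,\Delta,e,\varepsilon$ must be $\wt i_V \circ \wt i_H$, and hence that $\wt i_H$ and $\wt i_V$ are homomorphisms of the opposite groupoid structures, completing the reconstruction of the double-groupoid compatibility from the Hopf axioms. I expect this antipode step to be the main obstacle: it is the only place where the two groupoid inverses interact, and it is exactly where the hypothesis that the double base is a point is indispensable — as Example~\ref{ex:hopf} warns, the antipode conditions fail for a general double Lie groupoid, which is precisely what later forces the more elaborate notion of a symplectic hopfoid with the core as its base.
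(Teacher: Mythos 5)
You should first be aware that the paper offers no proof of this statement: it is quoted as Zakrzewski's theorem from~\cite{SZ2} (where it appears in the language of ``$S^*$-groups''), so there is no in-paper argument to match; the nearest analogue is the general correspondence of Theorem~\ref{thrm:main-cor} and the computations of Section~4. Measured against that, your overall framing is right --- both sides carry the same monoid, comonoid and antipode data, and the content is the equivalence of the double-group compatibilities with the Hopf diagrams --- and your forward direction is sound in outline. In particular the antipode analysis has the correct mechanism: composability of $c$ with $\mathbf i(c')=\wt i_V\wt i_H(c')$ forces $\wt s_V(c)\circ_H \wt s_V(c')$ to be the identity of $P^*$, i.e.\ $\wt s_V(a)=1^{P^*}$, and the products land in $\wt s_H^{-1}(1^P)$; note, though, that what this literally gives is $a\in \wt s_V^{-1}(1^{P^*})$ and output in $\wt s_H^{-1}(1^P)$, and the identifications of these Lagrangians with $\wt 1^H(P)$ and $\wt 1^V(P^*)$ use the standing connectedness assumptions and deserve an explicit line.

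The genuine gap is the reverse implication, which you dispatch in one sentence. From the Hopf diagrams you must recover the full double-groupoid structure: not just the interchange law, but that the source, target, unit and inverse maps of each groupoid are morphisms for the other, that $P$ and $P^*$ inherit group structures (e.g.\ that $\wt 1^H(P)$ is closed under $\wt\circ_V$), and --- since it is part of Definition~\ref{defn:dbl-grpd} --- that the double source map $S\to P\times P^*$ is a surjective submersion. None of this follows from ``reading the computations in reverse''; in the paper's general reconstruction the corresponding step is carried out by exhibiting $(D,\mathbf m,e)$ as a strongly positive symplectic $*$-monoid and invoking Zakrzewski's monoid--groupoid correspondence a second time, and you would need an argument of that kind here. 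A second, smaller gap sits in the forward direction: the Hopf axioms are equalities of canonical relations, so both inclusions are needed. The ``filling'' inclusion in the bialgebra square (every vertical decomposition of $x\,\wt\circ_V\,y$ is realized by a $2\times 2$ grid) and your claim that the antipode products ``sweep out all of $P^*$'' both rest on the surjectivity of the double source map (squares with prescribed pairs of edges exist), not on strong transversality bookkeeping nor on the bare fact that $P^*$ is a group rather than a groupoid; as written these are assertions, not proofs.
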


To be clear, what we call here a ``Hopf algebra object'' in the symplectic category is what Zakrzewski calls an ``$S^*$-group'', and his result is phrased in a slightly different manner.

\subsection{Core Groupoids}
We also point out that our reduction procedure gives a way of recovering Brown and Mackenzie's symplectic groupoid structure on the core $C$ of $D$.

\begin{prop}
The canonical relation $m: C \times C \to C$ given by the composition
\begin{displaymath}\label{core-product}
\begin{tikzpicture}[>=angle 90,baseline=(current  bounding  box.center)]
	\node (UL) at (0,0) {$C \times C$};
	\node (UR) at (3,0) {$D \times D$};
	\node (LL) at (6,0) {$D$};
	\node (LR) at (8,0) {$C,$};

	\tikzset{font=\scriptsize};
	\draw[->] (UL) to node [above] {$\mathbf e \times \mathbf e$} (UR);
	\draw[->] (UR) to node [above] {$\wt m_V$} (LL);
	\draw[->] (LL) to node [above] {$\mathbf e^t$} (LR);
\end{tikzpicture}
\end{displaymath}
where $\wt m_V$ is the product of the left grouopid $D \rightrightarrows H$ viewed as a canonical relation, is Brown and Mackenzie's groupoid product on $C$.
\end{prop}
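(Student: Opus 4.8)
The plan is to compute the composite relation $\mathbf{e}^t \circ \wt m_V \circ (\mathbf{e} \times \mathbf{e})$ explicitly using the square notation, and to compare the result with the Brown--Mackenzie formula recalled just before the statement of Theorem~\ref{thrm:core} (the picture with $c$, $\wt 1^H_{\wt t_H(c')}$, $\wt 1^V_{\wt t_V(c')}$, $c'$). First I would unwind each of the three pieces. For $c, c' \in C$, the coreduction $\mathbf{e}$ sends $c$ to any square of the form $c \,\wt\circ_H\, \wt 1^V_\lambda$ with $t_H(\lambda) = s_V(\wt s_H(c))$; since $c$ is in the core, $\wt s_H(c) = 1^V_m$ for $m = \wt s$-data of $c$, so $s_V(\wt s_H(c)) = m$ and $\lambda$ ranges over $t_H^{-1}(m)$. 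Thus $(\mathbf e \times \mathbf e)(c,c')$ is the set of pairs of squares $(c \,\wt\circ_H\, \wt 1^V_\lambda,\ c' \,\wt\circ_H\, \wt 1^V_{\lambda'})$ with $\lambda \in t_H^{-1}(m)$, $\lambda' \in t_H^{-1}(m')$ where $m' $ is the corresponding datum of $c'$. Next, $\wt m_V$ (horizontal concatenation, i.e.\ composition in $D \rightrightarrows H$) is applicable to such a pair exactly when the right $H$-side of the first equals the left $H$-side of the second, which forces a relation between $\lambda$ and $\lambda'$; after concatenating, one applies $\mathbf e^t$, which takes a square $a \in X = \wt s_H^{-1}(1^V M)$ to $a \,\wt\circ_H\, \wt 1^V_{i_H(\wt s_V(a))}$, i.e.\ it slides the square back onto the core along the characteristic leaf.

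The key step is to choose the parametrization cleverly so that the final answer is manifestly a single point (the composite must be a honest relation $C\times C \to C$, in particular single-valued on composable pairs, which is automatic once one checks the composition is strongly transversal, but the explicit identification is what we want). I would exploit the freedom in $\lambda, \lambda'$: because $\mathbf e^t$ post-composes with a horizontal unit that undoes whatever was appended, the net effect of the chain $\mathbf e^t \circ \wt m_V$ on a pair in the image of $\mathbf e \times \mathbf e$ should be independent of the auxiliary choices, so I would pick the most convenient representatives — e.g.\ take $\lambda' = 1^H_{m'}$ so that $\mathbf e(c') $ is just $c'$ itself (a core element is already in $X$), and then $\lambda \in t_H^{-1}(m)$ is forced by horizontal composability to be $s_H$-related to the left $H$-side of $c'$. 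With this choice, horizontal concatenation of $c \,\wt\circ_H\, \wt 1^V_\lambda$ with $c'$ and then applying $\mathbf e^t$ should reproduce exactly the outcome of the Brown--Mackenzie diagram: the appended $\wt 1^V_\lambda$ plays the role of the filler square $\wt 1^H_{\wt t_H(c')}$ (or its vertical-unit counterpart), and the horizontal-then-vertical vs.\ vertical-then-horizontal equivalence in the double groupoid guarantees the two descriptions agree. This is where I would invoke the interchange law for the two groupoid structures on $D$ together with the identities among structure maps (the tilde/non-tilde bookkeeping), much as in the proof of Lemma~\ref{lem:fol}.

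The routine-but-careful part is matching the source/target data of the output square with the core groupoid's source and target on $m(c,c')$, and verifying the compositions are strongly transversal so that $m$ is genuinely a canonical relation; the latter should follow because $\mathbf e$ is a coreduction and $\mathbf e^t$ a reduction (composition with (co)reductions is always strongly transversal, as noted after the definition of surmersion), while $\wt m_V$ composed with a coreduction is handled by the same principle applied to the groupoid product. The genuine obstacle is purely organizational: keeping the square-diagram composites straight and confirming that the auxiliary horizontal units introduced by $\mathbf e$ and removed by $\mathbf e^t$ combine with $c, c'$ to fill exactly the Brown--Mackenzie $2\times 2$ square. I expect no conceptual difficulty beyond a disciplined diagram chase, so the proof will be short: unwind the three arrows, choose good representatives, apply the interchange law, and read off that the result is Brown and Mackenzie's product.
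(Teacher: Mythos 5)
Your explicit computation with the convenient representatives is correct as far as it goes: taking $\lambda' = 1^H_{m'}$ makes $\mathbf e(c') = c'$ (the appended square is the double unit), composability then forces $\lambda = \wt t_V(c')\circ_H 1^H_{m'} = \wt t_V(c')$, and the resulting element $(c\,\wt\circ_H\,\wt 1^V_{\wt t_V(c')})\,\wt\circ_V\,c'$ is indeed the Brown--Mackenzie product (it is the horizontal composite of the two vertically-composed columns of their $2\times 2$ square). But this only proves that the graph of the Brown--Mackenzie product is \emph{contained} in the composite $\mathbf e^t\circ\wt m_V\circ(\mathbf e\times\mathbf e)$. The composite relation consists of all outputs over \emph{all} admissible intermediate choices, so you must also show that an arbitrary $\lambda'\in t_H^{-1}(m')$, with $\lambda = \wt t_V(c')\circ_H\lambda'$ forced by composability, produces the same element. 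Your stated reason for skipping this --- that single-valuedness ``is automatic once one checks the composition is strongly transversal'' --- is false: strong transversality guarantees only that the composite is a Lagrangian submanifold, not that it is a graph (the coreduction $\mathbf e$ itself, or the coproduct $\Delta$, are multivalued canonical relations arising from strongly transversal compositions). The fallback heuristic, that $\mathbf e^t$ ``undoes whatever was appended,'' is precisely the nontrivial point: $\mathbf e^t$ appends $\wt 1^V_{i_H(\lambda')}$ only \emph{after} the horizontal composition $\wt\circ_V$ has intervened, so cancelling it against the $\wt 1^V_{\lambda'}$ sitting inside the right-hand factor requires the interchange-law manipulations (decompose the appended unit square, recompose vertically, then collapse) that constitute essentially the entire proof in the paper. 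As it stands, the key step is asserted rather than proved; the argument for a general $\lambda'$ is what is missing.

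A secondary inaccuracy: the transversality principle you invoke applies to a composition $R'\circ R$ when $R$ (applied first) is a surmersion or $R'$ (applied second) is a cosurmersion; in the composite at hand the coreduction $\mathbf e\times\mathbf e$ comes first and the reduction $\mathbf e^t$ comes last, which is the opposite pattern, so that principle does not apply as stated and the strong transversality of the intermediate compositions would need a separate (if routine) check. This does not affect the main point above, but since your independence claim was made to rest on it, it compounds the gap.
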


\begin{proof}
Let $c, c' \in C$. Applying the relation $\mathbf e \times \mathbf e$ gives
\[
(c,c') \mapsto (c\,\wt\circ_{H}\,\wt 1^V_\lambda, c'\,\wt\circ_{H}\,\wt 1^V_{\lambda'})
\]
where $\lambda, \lambda'$ are as in~(\ref{unit}). Now, these are composable under $\wt m_V$ when
\[
\wt r_V(c\,\wt\circ_{H}\,\wt 1^V_\lambda) = \wt r_V(c) \circ_{H} \lambda = \lambda \text{ equals } \wt \ell_V(c'\,\wt\circ_{H}\,\wt 1^V_{\lambda'}) = \wt\ell_V(c') \circ_{H} \lambda',
\]
where we have used the fact that $\wt r_V(c)$ is a unit. From this we get the condition that
\[
\lambda = \wt\ell_V(c') \circ_{H}\lambda'.
\]
The relation $\wt m_V$ then produces
\[
(c\,\wt\circ_{H}\,\wt 1^V_\lambda) \wt\circ_V (c'\,\wt\circ_{H}\,\wt 1^V_{\lambda'}) = \left(c\,\wt\circ_{H}\,\wt 1^V_{\wt\ell_V(c') \circ_{H}\lambda'}\right) \wt\circ_V (c'\,\wt\circ_{H}\,\wt 1^V_{\lambda'}).
\]

Now, applying the map $\wt r_H$ to this gives
\[ \wt r_H(\wt 1^V_{\wt\ell_V(c') \circ_{H}\lambda'})\,\wt \circ_V\,\wt r_H(\wt 1^V_{\lambda'}) = 1^V_{r_H(\lambda')}\,\circ_V\,1^V_{r_H(\lambda')} = 1^V_{r_H(\lambda')}, \]
so that the element above is already in the domain $X$ of $\mathbf e^t$. Applying the final relation $\mathbf e^t$ then gives
\[ \left[\left(c\,\wt\circ_{H}\,\wt 1^V_{\wt\ell_V(c') \circ_{H}\lambda'}\right) \wt\circ_V (c'\,\wt\circ_{H}\,\wt 1^V_{\lambda'})\right]\,\wt\circ_H\,\wt 1^V_{i_H(\lambda')}. \]
Expressing this in square notation, we have:
\begin{center}
\begin{tikzpicture}
	\draw (0,0) rectangle (2.75,1);
	\node at (1.375,.5) {$c\,\wt\circ_{H}\,\wt 1^V_{\wt\ell_V(c') \circ_{H}\lambda'}$};
	\draw (2.75,0) rectangle (4.5,1);
	\node at (3.625,.5) {$c'\,\wt\circ_{H}\,\wt 1^V_{\lambda'}$};
	\draw (0,1) rectangle (4.5,2);
	\node at (2.25,1.5) {$\wt 1^V_{i_H(\lambda')}$};
	
	\node at (5,1) {$=$};
	
	\draw (5.5,0) rectangle (8.25,1);
	\node at (6.875,.5) {$c\,\wt\circ_{H}\,\wt 1^V_{\wt\ell_V(c') \circ_{H}\lambda'}$};
	\draw (8.25,0) rectangle (10.25,1);
	\node at (9.25,.5) {$c'\,\wt\circ_{H}\,\wt 1^V_{\lambda'}$};
	\draw (5.5,1) rectangle (8.25,2);
	\node at (6.875,1.5) {$\wt 1^V_{i_H(\lambda')}$};
	\draw (8.25,1) rectangle (10.25,2);
	\node at (9.25,1.5) {$\wt 1^V_{i_H(\lambda')}$};
	
	\node at (10.75,1) {$=$};
	
	\draw (11.25,.5) rectangle (13.25,1.5);
	\node at (12.25,1) {$c\,\wt\circ_H\,\wt 1^V_{\wt\ell_V(c')}$};
	\draw (13.25,.5) rectangle (14.25,1.5);
	\node at (13.75,1) {$c'$};
\end{tikzpicture}
\end{center}
where in the first step we decompose the top box horizontally and in the second we compose vertically. Using $c' = \wt 1^H_{\ell_H(c')}\,\wt\circ_H\, c'$, we can write this final expression as
\begin{center}
\begin{tikzpicture}
	\draw (-4,.5) rectangle (-2,1.5);
	\node at (-3,1) {$c\,\wt\circ_H\,\wt 1^V_{\wt\ell_V(c')}$};
	\draw (-2,.5) rectangle (-1,1.5);
	\node at (-1.5,1) {$c'$};
	
	\node at (-.5,1) {$=$};
	
	\draw (0,0) rectangle (1.5,1);
	\node at (.75,.5) {$c$};
	\draw (1.5,0) rectangle (3,1);
	\node at (2.25,.5) {$\wt 1^{H}_{\wt\ell_{H}(c')}$};
	\draw (0,1) rectangle (1.5,2);
	\node at (.75,1.5) {$\wt 1^V_{\wt\ell_V(c')}$};
	\draw (1.5,1) rectangle (3,2);
	\node at (2.25,1.5) {$c'$};
\end{tikzpicture}
\end{center}
which gives after composing either horizontally first and then vertically, or vertically first and then horizontally, Brown and Mackenzie's core groupoid product as claimed.
\end{proof}

The unit $e: pt \to C$ defined as the composition
\begin{displaymath}\label{core-unit}
\begin{tikzpicture}[>=angle 90,baseline=(current  bounding  box.center)]
	\node (UL) at (0,1) {$pt$};
	\node (UM) at (2,1) {$D$};
	\node (UR) at (4,1) {$C,$};

	\tikzset{font=\scriptsize};
	\draw[->] (UL) to node [above] {$H$} (UM);
	\draw[->] (UM) to node [above] {$\mathbf e^t$} (UR);
\end{tikzpicture}
\end{displaymath}
where $H$ denotes the image of the Lagrangian embedding $\widetilde 1_V: H \to D$, and $*$-structure $s: \overline C \to C$ defined as the composition
\begin{displaymath}\label{core-star}
\begin{tikzpicture}[>=angle 90,baseline=(current  bounding  box.center)]
	\node (1) at (0,1) {$\overline{C}$};
	\node (2) at (2,1) {$\overline{D}$};
	\node (3) at (4,1) {$D$};
	\node (4) at (6,1) {$C$};

	\tikzset{font=\scriptsize};
	\draw[->] (1) to node [above] {$\mathbf e$} (2);
	\draw[->] (2) to node [above] {$\wt i_V$} (3);
	\draw[->] (3) to node [above] {$\mathbf e^t$} (4);
\end{tikzpicture}
\end{displaymath}
then endow $(C,m,e)$ with the structure of a strongly positive symplectic $*$-monoid in the symplectic category, which is indeed a symplectic groupoid according to Zakrzewski. The various compatibilities required in the definition of a strongly positive symplectic $*$-monoid follow from commutativity of the diagrams
\begin{center}
\begin{tikzpicture}[>=angle 90]
	\node (L1) at (0,0) {$C$};
	\node (L2) at (3,0) {$D$};
	\node (U1) at (0,2) {$C \times C$};
	\node (U2) at (3,2) {$D \times D$};
	\node (U3) at (6,1) {$D$};
	\node (U4) at (9,1) {$C$};

	\tikzset{font=\scriptsize};
	\draw[->] (U1) to node [above] {$\mathbf e \times \mathbf e$} (U2);
	\draw[->] (U2) to node [above] {$\wt m_V$} (U3);
	\draw[->] (U3) to node [above] {$\mathbf e^t$} (U4);
	\draw[->] (L1) to node [above] {$\mathbf e$} (L2);
	\draw[->] (L1) to node [left] {$e \times id$} (U1);
	\draw[->] (L2) to node [left] {$H \times id$} (U2);
	\draw[->] (L2) to node [below] {$id$} (U3);
	\draw[->] (U1) to [bend left=30] node [above] {$m$} (U4);
	\draw[->] (L1) to [bend right=20] node [below] {$id$} (U4);
\end{tikzpicture}
\end{center}
for the left unit property (a similar one applies to the right unit property), and
\begin{center}
\begin{tikzpicture}[>=angle 90,scale=.75]
	\node (ULC) at (0,3) {$\overline{C} \times \overline{C}$};
	\node (UC) at (3,3) {$\overline{C} \times \overline{C}$};
	\node (URC) at (7,3) {$C \times C$};
	\node (LLC) at (0,-3) {$\overline{C}$};
	\node (LRC) at (7,-3) {$C$};

	\node (UL) at (0,1) {$\overline{D} \times \overline{D}$};
	\node (U) at (3,1) {$\overline{D} \times \overline{D}$};
	\node (UR) at (7,1) {$D \times D$};
	\node (LL) at (0,-1) {$\overline{D}$};
	\node (LR) at (7,-1) {$D$};

	\tikzset{font=\scriptsize};
	\draw[->] (ULC) to node [above] {$\sigma$} (UC);
	\draw[->] (UC) to node [above] {$s \times s$} (URC);
	\draw[->] (ULC) to [bend right=80] node [left] {$m$} (LLC);
	\draw[->] (URC) to [bend left=80] node [right] {$m$} (LRC);
	\draw[->] (LLC) to node [above] {$s$} (LRC);

	\draw[->] (UL) to node [above] {$\sigma$} (U);
	\draw[->] (U) to node [above] {$\wt i_V \times \wt i_V$} (UR);
	\draw[->] (UL) to node [left] {$\wt m_V$} (LL);
	\draw[->] (UR) to node [right] {$\wt m_V$} (LR);
	\draw[->] (LL) to node [above] {$\wt i_V$} (LR);
	
	\draw[->] (ULC) to node [left] {$\mathbf e \times \mathbf e$} (UL);
	\draw[->] (LL) to node [left] {$\mathbf e^t$} (LLC);
	\draw[->] (URC) to node [right] {$\mathbf e \times \mathbf e$} (UR);
	\draw[->] (LR) to node [right] {$\mathbf e^t$} (LRC);
\end{tikzpicture}
\end{center}
where $\sigma$ is the symplectomorphism exchanging components, for the strongly positive $*$-structure property. We omit the full details of these verifications.

\begin{rmk}
Now that we have obtained the symplectic groupoid structure on the core $C \rightrightarrows M$ of $D$ via reduction, we know that $M$ inherits a unique Poisson structure relative to which the source map of the core is Poisson and the target map anti-Poisson. It would be interesting to know whether this Poisson structure can be derived more directly from the reduction procedure without making use of the full symplectic groupoid structure on $C$.
\end{rmk}

\section{Symplectic Hopfoids}
\subsection{From Double Groupoids to Hopfoids}
Let us return to the structures of Example \autoref{ex:main}. To recall, performing the procedure above in this case produced the relations $(T^*e)^t: T^*G \to T^*M$, $T^*s: T^*G \to T^*M$, and $T^*t: T^*G \to T^*r$ obtained by taking the cotangent lifts of the groupoid structure maps of $G \rightrightarrows M$. Also note that the cotangent lift $T^*i: T^*G \to T^*G$ of the groupoid inverse of $G$ is just the composition of the two symplectic groupoid inverses on $T^*G$. In addition, the canonical relations $T^*m: T^*G \times T^*G \to T^*G$ and $T^*\Delta: T^*G \to T^*G \times T^*G$ (where $\Delta: G \to G \times G$ is the usual diagonal map) come from the two groupoid products on $T^*G$.

The above canonical relations are precisely the ones which appear in the diagrams obtained by applying the cotangent functor $T^*$ to the commutative diagrams appearing in the definition of a Lie groupoid. This suggests that the structure $T^*G \rightrightarrows T^*M$ obtained should be viewed as the analog of a ``groupoid'' in the symplectic category, as previously mentioned.

This all generalizes in the following way for an arbitrary symplectic double groupoid; we will denote the resulting structure by $D \rightrightarrows C$. First, the coreduction $\mathbf{e}: C \to D$ is given in (\ref{unit}); this is the analog of $T^*e: T^*M \cored T^*G$ in Example \ref{ex:main}. Second, the reduction $\mathbf{t}: D \to C$ is given in (\ref{target}); this is the analog of $T^*t: T^*G \red T^*M$ in Example \ref{ex:main}. We will think of $\mathbf{e}$ as the ``unit'' and $\mathbf{t}$ the ``target'' of $D \rightrightarrows C$.

Now, as noted before, the reduction of $\wt t_V^{-1}(1^{H}M)$ naturally gives the target-core $C_t$, so to get a morphism to $C$ we must post-compose the resulting reduction relation $D \red C_t$ with the composition $\wt i_V \circ \wt i_{H}$ of the two inverses on $D$. Note that since the inverse of a symplectic groupoid is an anti-symplectomorphism, this composition of two inverses is a symplectomorphism, so the relation $\mathbf{s}: D \red C$ so obtained is indeed a canonical relation. This relation is explicitly given by
\begin{equation}\label{source}
\mathbf{r}: a \mapsto \wt i_V \wt i_{H}(a)\ \wt\circ_V\ \wt 1^{H}_{\wt t_{H}(a)}.
\end{equation}
In Example \ref{ex:main} this relation became $T^*s: T^*G \red T^*M$, and we will think of $\mathbf{s}$ as the ``source'' of $D \rightrightarrows C$.

Finally, we will let $\mathbf{i}: D \to D$ be the composition $\wt i_V \circ \wt i_{H}$, which as noted above is a symplectomorphism; in Example \ref{ex:main} this is $T^*i$. We then have the following observations:

\begin{prop}\label{prop:sections}
The canonical relations above satisfy the following identities: $\mathbf{t} \circ \mathbf{e} = id_C$, $\mathbf{s} \circ \mathbf{e} = id_C$, $\mathbf{s} = \mathbf{r} \circ \mathbf{i}$, $\mathbf{t} = \mathbf{s} \circ \mathbf{i}$. All of these compositions are strongly transversal.
\end{prop}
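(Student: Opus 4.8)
The plan is to treat each of the four identities as an equality of canonical relations, so in every case two things must be checked: that the composition involved is strongly transversal, and that the underlying equality of relations holds. Conceptually the identities are nothing but the translation, through the constructions of Section~\ref{sec:reduc}, of the facts that in a groupoid the unit is a common section of the source and target and that source and target are exchanged by inversion. I expect essentially all of the honest work to be concentrated in (a) the strong transversality of $\mathbf t\circ\mathbf e$ and $\mathbf s\circ\mathbf e$, which are compositions of a reduction with a coreduction --- precisely the shape \emph{not} covered by the sufficient condition for strong transversality recalled in Section~2 --- and (b) routine bookkeeping with the structure identities of $D$; for the two identities involving $\mathbf i$, strong transversality will be automatic.

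Take first $\mathbf t\circ\mathbf e=\mathrm{id}_C$. Recall that $\mathbf e^{t}\colon D\red C$ is the reduction of $X:=\wt s_H^{-1}(1^{V}M)$ and $\mathbf t\colon D\red C$ the reduction of $Y:=\wt s_V^{-1}(1^{H}M)$, so $(c,c')\in\mathbf t\circ\mathbf e$ precisely when there is $a\in X\cap Y$ with $\mathbf e^{t}(a)=c$ and $\mathbf t(a)=c'$. The point is that $X\cap Y$ is \emph{exactly} the core: by construction a point of $D$ lies in $X\cap Y$ iff both of its sources are units, which is the definition of $C$. It then remains to see that $\mathbf e^{t}$ and $\mathbf t$ both restrict to $\mathrm{id}_C$ on the core, which is read off the formulas (\ref{unit}) and (\ref{target}): for $a\in C$ the elements $\lambda$ occurring there are forced to be units (the sources of $a$ are units and a groupoid inverse fixes units), and then the unit axiom together with the compatibility $\wt 1^{V}\!\circ 1^{H}=\wt 1^{H}\!\circ 1^{V}$ collapses the expression back to $a$. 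Hence $\mathbf t\circ\mathbf e=\Delta_C=\mathrm{id}_C$. The identity $\mathbf s\circ\mathbf e=\mathrm{id}_C$ is obtained in the same way, using the coisotropic submanifolds built from the target maps in Theorem~\ref{thrm:core} and carrying along the symplectomorphism $\wt i_V\circ\wt i_H$ that identifies the target--core with $C$.

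The strong transversality of $\mathbf t\circ\mathbf e$ (and of $\mathbf s\circ\mathbf e$) I would deduce from the transversality of $X$ and $Y$ inside $D$. This holds because $X$ and $Y$ are the preimages, under the double source map $D\to H\times V$ --- a surjective submersion by Definition~\ref{defn:dbl-grpd} --- of the submanifolds $H\times 1^{V}M$ and $1^{H}M\times V$, which are manifestly transverse in $H\times V$ with intersection $1^{H}M\times 1^{V}M\cong M$; pulling back gives $X$ transverse to $Y$ with $X\cap Y=C$. (Dimensionally this is consistent: $V$ and $H$ are Lagrangian in $D$, so $\dim V=\dim H=\tfrac12\dim D$ and $\dim X+\dim Y-\dim D=2\dim M=\dim C$, matching what transversality predicts.) A short tangent-space computation then shows this transversality is equivalent to transversality of $\mathbf e\times\mathbf t$ with $C\times\Delta_D\times C$, and that the resulting fibre product projects onto $\Delta_C\subseteq C\times C$ by a closed embedding, which gives strong transversality. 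I expect this to be the one step requiring genuine care.

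Finally, $\mathbf i\colon D\to D$ is the graph of the symplectomorphism $\wt i_V\circ\wt i_H$, hence a surmersion, so any composition with $\mathbf i$ as its first factor is automatically strongly transversal; this settles strong transversality for the last two identities. These two say that the source and target reductions are interchanged by $\mathbf i$, and I would prove one of them by direct substitution: plugging $\mathbf i(a)=\wt i_V\wt i_H(a)$ into the formula (\ref{target}) and simplifying with the homomorphism identities $\wt s_H\circ\wt i_V=i_V\circ\wt s_H$, $\wt s_H\circ\wt i_H=\wt t_H$ and $i_V\circ i_V=\mathrm{id}$ reproduces the formula (\ref{source}). The other identity is then immediate, since the two groupoid inverses on $D$ commute --- their composite $\mathbf i$ is the $180^{\circ}$ rotation of a square, hence an involution --- so composing the first identity on the right with $\mathbf i$ yields the second.
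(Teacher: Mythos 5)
Most of your outline is sound, and in places it is sharper than the paper's own write-up: reducing strong transversality of $\mathbf t\circ\mathbf e$ to transversality of $X=\wt s_{H}^{-1}(1^VM)$ and $Y=\wt s_V^{-1}(1^HM)$ inside $D$ (obtained by pulling back the transverse pair $H\times 1^VM$, $1^HM\times V$ under the double source map) is correct, since the middle-factor condition for composing $\mathbf e$ with $\mathbf t$ is exactly $T_aX+T_aY=T_aD$ at points of $X\cap Y=C$; and your derivation of $\mathbf s=\mathbf t\circ\mathbf i$ by substituting into (\ref{target}) and using $\wt s_H\circ\wt i_V=i_V\circ\wt s_H$, $\wt s_H\circ\wt i_H=\wt t_H$, together with $\mathbf i^2=id$ for the remaining identity, is exactly the computation the paper leaves to the reader.

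The gap is the identity $\mathbf s\circ\mathbf e=id_C$, which is the one part of the proposition the paper actually has to work for. Your mechanism for $\mathbf t\circ\mathbf e$ was: the intermediate points of the composition lie in $X\cap Y=C$, and both relations restrict to the identity on $C$. This does not transfer. Since $\dom\mathbf s=\wt t_V^{-1}(1^HM)$, the intermediate points of $\mathbf s\circ\mathbf e$ lie in $X\cap\wt t_V^{-1}(1^HM)$, the set of squares whose top and \emph{left} sides are units; this is neither the source-core $C$ nor the target-core, and on it neither $\mathbf e^t$ nor $\mathbf s$ restricts to the identity (for such an $a$ the element $\lambda$ appearing in (\ref{unit}) is not forced to be a unit, because $\wt s_V(a)$ need not be one). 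What must be shown is that for $c\in C$ the unique intermediate point is $c\,\wt\circ_H\,\wt 1^V_{i_H(\wt t_V(c))}$ and that applying (\ref{source}) to it returns $c$, i.e.
\[
\bigl(\wt 1^V_{\wt t_V(c)}\,\wt\circ_H\,\wt i_V\wt i_H(c)\bigr)\,\wt\circ_V\,\wt 1^H_{\wt t_H(c)}=c,
\]
a genuine computation with the interchange law and the compatibility $\wt 1^V\circ 1^H=\wt 1^H\circ 1^V$; this square decomposition/recomposition is the bulk of the paper's proof, and ``carrying along the symplectomorphism $\wt i_V\circ\wt i_H$'' does not substitute for it. Relatedly, for this composition your transversality argument must use the corner map $(\wt t_V,\wt s_H)$ rather than the double source map (it is still a surjective submersion, differing from the double source map by the diffeomorphisms $\wt i_V$ and $i_V$), a point worth stating explicitly.
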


\begin{proof}
Suppose that $a \in C$, so that $\wt s_V(a)$ and $\wt s_{H}(a)$ are both units. Then the relation $\mathbf{e}$ sends this to 
\[
\mathbf{e}: a \mapsto a\,\wt\circ_{H}\,\wt 1^V_\lambda
\]
for $\lambda$ in the same leaf as $a$ of the characteristic foliation of $\wt s_{H}^{-1}(1^V M)$. Now, this is in the domain of $\mathbf{t}$ when
\[
\wt s_V(a\,\wt\circ_{H}\,\wt 1^V_\lambda) = \wt s_V(a)\,\circ_{H}\,\lambda = \lambda
\]
is a unit. Thus there is only one such $\lambda$ so that $a\,\wt\circ_{H}\,\wt 1^V_\lambda \in \dom\mathbf{t}$, from which strong transversality will follow, and it is then straightforward to check that applying the relation $\mathbf{t}$ will give back $a$. Hence $\mathbf{t} \circ \mathbf{e} = id_C$.

More interestingly, $a\,\wt\circ_{H}\,\wt 1^V_\lambda$ is in the domain of $\mathbf{s}$ when
\[
\wt t_V(a\,\wt\circ_{H}\,\wt 1^V_\lambda) = \wt t_V(a)\circ_{H}\lambda
\]
is a unit, which requires that $\lambda = i_{H}(\wt t_V(a))$. Again, from this strong transversality follows and we see that the composition $\mathbf{s} \circ \mathbf{e}$ is
\[
a \mapsto \wt i_V \wt i_{H}\left(a\,\wt\circ_{H}\,\wt 1^V_{i_{H}(\wt t_V(a))}\right) \wt\circ_V\,\wt 1^{H}_{\wt t_{H}(a)} = \left(\wt 1^V_{\wt t_V(a)}\,\wt\circ_{H}\,\wt i_V\wt i_{H}(a)\right)\wt\circ_V\,\wt 1^{H}_{\wt t_{H}(a)}.
\]

We claim that this result is simply $a$. To see this, we express the result as
\begin{center}
\begin{tikzpicture}[>=angle 90]
	\draw (0,0) rectangle (2,2);
	\node at (1,1) {$\wt 1^V_{\wt t_V(a)}$};
	\draw (0,2) rectangle (2,4);
	\node at (1,3) {$\wt i_V\wt i_{H}(a)$};
	\draw (2,0) rectangle (4,4);
	\node at (3,2) {$\wt 1^{H}_{\wt t_{H}(a)}$};
	
	\node at (5,2) {$=$};
	
	\draw (6,0) rectangle (8,2);
	\node at (7,1) {$\wt 1^V_{\wt t_V(a)}$};
	\draw (8,0) rectangle (10,2);
	\node at (9,1) {$a$};
	\draw (6,2) rectangle (8,4);
	\node at (7,3) {$\wt i_V\wt i_{H}(a)$};
	\draw (8,2) rectangle (10,4);
	\node at (9,3) {$\wt i_{H}(a)$};
	
	\node at (11,2) {$=$};
	
	\draw (12,0) rectangle (14,2);
	\node at (13,1) {$a$};
	\draw (12,2) rectangle (14,4);
	\node at (13,3) {$\wt 1^V_{i_{H}(\wt s_V(a))}$};
\end{tikzpicture}
\end{center}
where in the first step we decompose vertically and in the second we compose horizontally. Now, since $\wt s_V(a)$ is a unit, the element on top in the last term is of the form
\[
\wt 1^V_{i_{H}(1^{H}_m)} = \wt 1^V 1^{H}_m = \wt 1^{H}1^V_m.
\]
The claim then follows by composing the last term vertically. The computations which show that $\mathbf{s} = \mathbf{t} \circ \mathbf{i}$ and $\mathbf{t} = \mathbf{s} \circ \mathbf{i}$ and that these compositions are strongly transversal are similar.
\end{proof}

Now, there are two groupoid multiplications on $D$. We will denote by $\mathbf{m}: S \times S \to S$ the canonical relation given by $\wt m_V$, and will think of $\mathbf{m}$ as a ``product''. Similarly, we will denote by $\Delta: S \to S \times S$ the transpose of canonical relation obtained from $\wt m_{H}$, and will think of $\Delta$ as a ``coproduct''.

The following propositions then express some compatibilities between these relations and those previously defined. In particular, in the example of $T^*G \rightrightarrows T^*M$, we emphasize that the diagrams considered are precisely those obtained by applying the cotangent functor to those in the diagrams expressing the compatibilities between the structure maps of the Lie groupoid $G \rightrightarrows M$.

\begin{prop}
The following diagram commutes and all compositions are strongly transversal:
\begin{center}
\begin{tikzpicture}[>=angle 90]
	\node (UL) at (0,1) {$D \times D$};
	\node (UM) at (3,1) {$C \times D$};
	\node (UR) at (6,1) {$D \times D$};
	\node (LL) at (0,-1) {$D$};
	\node (LR) at (6,-1) {$D$};

	\tikzset{font=\scriptsize};
	\draw[->] (UL) to node [above] {$\mathbf{t} \times id$} (UM);
	\draw[->] (UM) to node [above] {$\mathbf{e} \times id$} (UR);
	\draw[->] (LL) to node [left] {$\Delta$} (UL);
	\draw[->] (UR) to node [right] {$\mathbf{m}$} (LR);
	\draw[->] (LL) to node [above] {$id$} (LR);
\end{tikzpicture}
\end{center}
\end{prop}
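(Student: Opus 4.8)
The plan is to verify directly that $\mathbf{m}\circ(\mathbf{e}\times\mathrm{id})\circ(\mathbf{t}\times\mathrm{id})\circ\Delta=\mathrm{id}_D$, together with strong transversality of all the compositions involved. Conceptually this diagram is, for a general symplectic double groupoid, the analogue of the cotangent lift of the left unit law $1_{t(g)}\circ g=g$ of the core groupoid: in the model case $D=T^*G$ of Example~\ref{ex:main} it is literally $T^*$ applied to that identity (with $\Delta=T^*\delta$ for the diagonal $\delta\colon G\to G\times G$), so there commutativity follows from functoriality of $T^*$; for a general $D$ I would instead compute in square notation.

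First I would unwind $\Delta=\wt m_H^t$: a point of $\Delta(a)$ is a pair $(b,c)$ with $b\,\wt\circ_H c=a$, that is, a vertical splitting of the square $a$ into $c$ stacked on top of $b$. For such a pair to lie in $\dom(\mathbf{t}\times\mathrm{id})=\wt s_V^{-1}(1^H M)\times D$ one needs $\wt s_V(b)\in 1^H M$; since $\wt s_V$ is a homomorphism for the top multiplication, $\wt s_V(b)=\wt s_V(a)\circ_H i_H(\wt s_V(c))$, which is a unit exactly when $\wt s_V(c)=\wt s_V(a)$. In particular the splitting $b=\wt 1^H_{\wt t_H(a)}$, $c=a$ is always admissible, and I would run the computation along it. Using the unit compatibilities of a double groupoid (the identity $\wt 1^H\circ 1^V=\wt 1^V\circ 1^H$, the homomorphism properties of the unit embeddings, and corner identities such as $t_H\circ\wt t_V=t_V\circ\wt t_H$), formula~(\ref{target}) identifies $\mathbf{t}(\wt 1^H_{\wt t_H(a)})$ with the core unit over $t_V(\wt t_H(a))$; formula~(\ref{unit}) then carries it under $\mathbf{e}$ to the family $\{\wt 1^V_\lambda : t_H(\lambda)=t_V(\wt t_H(a))\}$; and composability with $a$ under $\mathbf{m}=\wt m_V$ forces $\lambda=\wt t_V(a)$, yielding $\wt 1^V_{\wt t_V(a)}\,\wt\circ_V a=a$ by the left unit law of the left groupoid $D\rightrightarrows H$. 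Thus $(a,a)$ always lies in the composite relation $R:=\mathbf{m}\circ(\mathbf{e}\times\mathrm{id})\circ(\mathbf{t}\times\mathrm{id})\circ\Delta$; that is, $\Delta_D\subseteq R$.

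Next I would establish strong transversality of the four compositions, so that $R$ is genuinely a canonical relation. For this I would use that $\mathbf{t}$ and $\mathbf{m}$ are reductions while $\mathbf{e}$ and $\Delta$ are coreductions, together with the fact (recorded just after the definition of surmersion) that a composition is automatically strongly transversal whenever its first relation is a surmersion or its second a cosurmersion; this already covers, for instance, $\mathbf{t}\times\mathrm{id}$ (a surmersion) followed by $\mathbf{e}\times\mathrm{id}$. For the compositions not directly of this form I would read off from the explicit square descriptions above that the relevant fibered intersection is transverse, of the expected dimension, and projects onto $\overline D\times D$ with closed embedded image.

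Granting this, $R$ is a canonical relation, hence a closed submanifold of $\overline D\times D$ of dimension $\dim D$. It remains to pass from $\Delta_D\subseteq R$ to $R=\Delta_D$. The clean way, which I would adopt, is to note that under the standing hypothesis that the source and target fibers of the groupoids involved are connected, $R$ is connected; since it contains the closed submanifold $\Delta_D$, also of dimension $\dim D$, the submanifold $\Delta_D$ is open and closed in $R$, whence $R=\Delta_D=\mathrm{id}_D$. The more pedestrian alternative is to rerun the square computation for an arbitrary admissible splitting $(b,c)$: substituting $\mathbf{t}(b)=b\,\wt\circ_V\wt 1^H_{i_V(\wt s_H(b))}$ and the $\mathbf{e}$-leaf and then reassembling the resulting $2\times2$ array via the interchange law, the inserted units collapse by the left and right unit laws in the top and left groupoids, again leaving $b\,\wt\circ_H c=a$. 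This last manipulation — essentially the same bookkeeping as in the proof of Proposition~\ref{prop:sections} — is the step I expect to be the main obstacle: one must keep careful track of which sides of each square are units and apply the interchange law at precisely the right stage.
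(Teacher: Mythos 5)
Your computation along the special splitting $b=\wt 1^H_{\wt t_H(a)}$, $c=a$ is fine, but it only establishes the inclusion $\Delta_D\subseteq R$, where $R$ is the composite relation; commutativity of the diagram is the equality $R=\mathrm{id}_D$, and your proposed ``clean way'' to get the reverse inclusion has a genuine gap. There is no reason for $R$ to be connected: the standing hypothesis in the paper is that the source and target fibers of the constituent groupoids are connected, which says nothing about the composed relation (and indeed $R\supseteq\Delta_D\cong D$, while $D$ itself is not assumed connected). Even granting connectedness of the relevant fibers, the open-and-closed argument still requires that every component of $R$ meet the diagonal, and ruling out components of $R$ disjoint from $\Delta_D$ is exactly the issue at stake. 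What is actually needed is to show that for an \emph{arbitrary} admissible splitting $a=a_1\,\wt\circ_H\,a_2$ and admissible $\lambda$, the output $[(a_1\,\wt\circ_V\,\wt 1^H_{i_V(\wt s_H(a_1))})\,\wt\circ_H\,\wt 1^V_\lambda]\,\wt\circ_V\,a_2$ equals $a$. This is precisely the paper's proof: one decomposes the unit boxes (the box $\wt 1^V_\lambda$ horizontally and $a_2$ vertically), composes the resulting array using $\wt s_H(a_1)=\wt t_H(a_2)$ and the interchange law, and the inserted units collapse, leaving $a_1\,\wt\circ_H\,a_2=a$. In other words, the ``pedestrian alternative'' you sketch and defer as the expected main obstacle is not an optional detour but the heart of the argument, and it is not carried out in your proposal.

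Two smaller points. First, your strong transversality remark covers only the middle composition automatically: for $(\mathbf t\times id)\circ\Delta$ the first relation is a cosurmersion and the second a surmersion, and for $\mathbf m\circ(\mathbf e\times id)$ the first is a cosurmersion and the second a surmersion, so the criterion ``first a surmersion or second a cosurmersion'' does not apply to either, and your appeal to reading transversality off the square description would need to be made explicit (the paper is admittedly also terse on this). Second, the domain analysis and the identification of $\mathbf t(\wt 1^H_{\wt t_H(a)})$ with the core unit over $t_V(\wt t_H(a))$, and of its image under $\mathbf e$ with $\{\wt 1^V_\lambda: t_H(\lambda)=t_V(\wt t_H(a))\}$, are correct and consistent with formulas~(\ref{target}) and~(\ref{unit}); these steps would serve you well if you completed the general-splitting computation.
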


\begin{proof}
For $a \in D$, the composition $\mathbf{m} \circ (\mathbf{e} \times id) \circ (\mathbf{t} \times id) \circ \Delta$ is
\[
a \mapsto [(a_1\ \wt\circ_V\ \wt 1^{H}_{i_V(\wt s_{H}(a_1))})\ \wt\circ_{H}\ \wt 1^V_\lambda]\ \wt\circ_V\ a_2
\]
for $\lambda$ in the same leaf as $a$ of the characteristic foliation of $\wt s_{H}^{-1}(1^V M)$ and where $a = a_1\ \wt\circ_{H}\ a_2$ for composable---with respect to the groupoid structure whose product is $\Delta^t$---$a_1$ and $a_2 \in D$; this is simply saying that $a$ is in the domain of $\Delta$. We must show that the resulting expression is just $a$ itself. This follows from the compositions:

\begin{center}
\begin{tikzpicture}
	\draw (0,0) rectangle (2,2);
	\node at (1,1) {$a_1$};
	\draw (2,0) rectangle (4,2);
	\node at (3,1) {$\,\wt 1^{H}_{i_V(\wt s_{H}(a_1))}$};
	\draw (0,2) rectangle (4,4);
	\node at (2,3) {$\wt 1^V_\lambda$};
	\draw (4,0) rectangle (6,4);
	\node at (5,2) {$a_2$};
	
	\node at (7,2) {$=$};
	
	\draw (8,0) rectangle (10,2);
	\node at (9,1) {$a_1$};
	\draw (10,0) rectangle (12,2);
	\node at (11,1) {$\,\wt 1^{H}_{i_V(\wt s_{H}(a_1))}$};
	\draw (12,0) rectangle (14,2);
	\node at (13,1) {$\wt 1^{H}_{\wt t_{H}(a_2)}$};
	\draw (8,2) rectangle (10,4);
	\node at (9,3) {$a_2$};
	\draw (10,2) rectangle (12,4);
	\node at (11,3) {$\wt i_V(a_2)$};
	\draw (12,2) rectangle (14,4);
	\node at (13,3) {$a_2$};
\end{tikzpicture}
\end{center}
where we decompose the top left box horizontally and the right box vertically, and

\begin{center}
\begin{tikzpicture}[scale=.91]
	\draw (8,0) rectangle (10,2);
	\node at (9,1) {$a_1$};
	\draw (10,0) rectangle (12,2);
	\node at (11,1) {$\,\wt 1^{H}_{i_V(\wt s_{H}(a_1))}$};
	\draw (12,0) rectangle (14,2);
	\node at (13,1) {$\wt 1^{H}_{\wt t_{H}(a_2)}$};
	\draw (8,2) rectangle (10,4);
	\node at (9,3) {$a_2$};
	\draw (10,2) rectangle (12,4);
	\node at (11,3) {$\wt i_V(a_2)$};
	\draw (12,2) rectangle (14,4);
	\node at (13,3) {$a_2$};
	
	\node at (15,2) {$=$};
	
	\draw (16,0) rectangle (18,2);
	\node at (17,1) {$a_1$};
	\draw (18,0) rectangle (20,2);
	\node at (19,1) {$\,\wt 1^{H}_{s_V(\wt t_{H}(a_2))}$};
	\draw (16,2) rectangle (18,4);
	\node at (17,3) {$a_2$};
	\draw (18,2) rectangle (20,4);
	\node at (19,3) {$\wt 1^V_{\wt s_V(a_2)}$};
	
	\node at (21,2) {$=$};
	
	\draw (22,0) rectangle (24,2);
	\node at (23,1) {$a_1$};
	\draw (22,2) rectangle (24,4);
	\node at (23,3) {$a_2$};
	\draw (24,0) rectangle (26,4);
	\node at (25,2) {$\wt 1^V_{\wt s_V(a_2)}$};
\end{tikzpicture}
\end{center}
where we first compose the right four boxes horizontally (using the fact that $\wt s_{H}(a_1) = \wt t_{H}(a_2)$) and then compose the right two boxes vertically. The resulting composition is then $a_1\,\wt\circ_{H}\,a_2 = a$ as was to be shown.
\end{proof}

The proof of the following result is very similar to that above and is omitted. The need to use $\mathbf{i} \circ \mathbf{e}$ in place of simply $\mathbf{e}$ here comes from the difference between the target- and source-cores of $D$.

\begin{prop}
The following diagram commutes and all compositions are strongly transversal:
\begin{center}
\begin{tikzpicture}[>=angle 90]
	\node (UL) at (0,1) {$D \times D$};
	\node (UM) at (3,1) {$D \times C$};
	\node (UR) at (6,1) {$D \times D$};
	\node (LL) at (0,-1) {$D$};
	\node (LR) at (6,-1) {$D$};

	\tikzset{font=\scriptsize};
	\draw[->] (UL) to node [above] {$id \times \mathbf{s}$} (UM);
	\draw[->] (UM) to node [above] {$id \times (\mathbf{i} \circ \mathbf{e})$} (UR);
	\draw[->] (LL) to node [left] {$\Delta$} (UL);
	\draw[->] (UR) to node [right] {$\mathbf{m}$} (LR);
	\draw[->] (LL) to node [above] {$id$} (LR);
\end{tikzpicture}
\end{center}
\end{prop}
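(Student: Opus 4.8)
The plan is to argue exactly as in the preceding proposition. The claim is that $\mathbf{m} \circ (id_D \times (\mathbf{i} \circ \mathbf{e})) \circ (id_D \times \mathbf{s}) \circ \Delta = id_D$ as canonical relations, with all compositions strongly transversal. Given $a \in D$, the relation $\Delta$ — being the transpose of the top-groupoid product $\wt m_H$ — decomposes $a$ as a product $a = a_1 \,\wt\circ_H\, a_2$, the decomposition being free only in the intermediate element $\wt s_H(a_1) = \wt t_H(a_2) \in V$. One then pushes the pair $(a_1,a_2)$ through $id_D \times \mathbf{s}$, then $id_D \times (\mathbf{i} \circ \mathbf{e})$, then $\mathbf{m} = \wt m_V$, and checks both that the result is again $a$ and that at each stage the auxiliary data is uniquely determined (which gives strong transversality).

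Carrying this out: $id_D \times \mathbf{s}$ fixes $a_1$ and replaces $a_2$ by $\mathbf{s}(a_2) = \wt i_V\wt i_H(a_2)\,\wt\circ_V\,\wt 1^H_{\wt t_H(a_2)}$, which forces $a_2 \in \wt t_V^{-1}(1^H M) = \dom\mathbf{s}$; then $id_D \times (\mathbf{i} \circ \mathbf{e})$ further transforms the second factor by a copy of $\mathbf{i} = \wt i_V \circ \wt i_H$ and an $\mathbf{e}$-unit (introducing the only genuinely free parameter, a representative of a characteristic leaf); finally $\mathbf{m}$ concatenates the two factors horizontally. The verification that everything collapses to $a$ is then a short sequence of moves in square notation — repeatedly splitting a square as a horizontal or a vertical product of two squares and recombining in the other order — using the interchange law, the fact that $\wt i_V$ and $\wt i_H$ are anti-homomorphisms for their own product and homomorphisms for the other (standard for double groupoids), and the fact that they send units to units, just as in the previous proof. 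Strong transversality drops out of the same computation, since the composability constraints at each step (the intermediate element of $\Delta$, the leaf representative of $\mathbf{e}$, membership in $\dom\mathbf{m}$) each have a unique solution. The one structural difference from the previous proposition is that $\mathbf{i} \circ \mathbf{e}$ appears rather than $\mathbf{e}$: this is forced by the fact that $\mathbf{s}$ is built from the reduction onto the \emph{target}-core $C_t$ post-composed with $\wt i_V\wt i_H$, so a compensating $\mathbf{i}$ on the complementary factor is exactly what lets the inverses cancel. I expect the only real difficulty to be this bookkeeping — keeping straight which of the nested pairs of groupoid inverses (inside $\mathbf{s}$, inside $\mathbf{i}$, and implicit in the target-/source-core identification) annihilate one another so that the square picture telescopes back to $a$.

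Alternatively, one can bypass the square manipulations and deduce the identity formally. Write $(\star)$ for the identity $\mathbf{m} \circ (\mathbf{e} \times id_D) \circ (\mathbf{t} \times id_D) \circ \Delta = id_D$ of the previous proposition. Since $\mathbf{i}$ is the graph of an involutive symplectomorphism, $(\star)$ gives $id_D = \mathbf{i} \circ \mathbf{m} \circ (\mathbf{e} \times id_D) \circ (\mathbf{t} \times id_D) \circ \Delta \circ \mathbf{i}$. As $\wt i_V$ and $\wt i_H$ are groupoid inverses, $\mathbf{i}$ is an anti-automorphism of both products, so $\mathbf{i} \circ \mathbf{m} = \mathbf{m} \circ \sigma \circ (\mathbf{i} \times \mathbf{i})$ and $\Delta \circ \mathbf{i} = \sigma \circ (\mathbf{i} \times \mathbf{i}) \circ \Delta$, with $\sigma$ the swap on $D \times D$. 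Substituting, using $\sigma \circ (A \times B) \circ \sigma = B \times A$ and $\mathbf{i} \circ \mathbf{i} = id_D$, the right-hand side simplifies to $\mathbf{m} \circ (id_D \times (\mathbf{i} \circ \mathbf{e} \circ \mathbf{t} \circ \mathbf{i})) \circ \Delta$, and then $\mathbf{t} \circ \mathbf{i} = \mathbf{s}$ (immediate from $\mathbf{t} = \mathbf{s} \circ \mathbf{i}$ in Proposition~\ref{prop:sections}) converts this into exactly the asserted identity. With this route the commutativity is essentially free, but one still needs to check strong transversality of the particular factorization $\mathbf{m} \circ (id_D \times (\mathbf{i} \circ \mathbf{e})) \circ (id_D \times \mathbf{s}) \circ \Delta$, which comes, as before, from the observation that the relevant composability conditions have unique solutions.
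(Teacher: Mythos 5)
Your proposal is correct, and its primary route is exactly the paper's: the paper omits this proof, noting it is ``very similar'' to the left-unit proposition preceding it, and your square-notation plan (decompose $a$ via $\Delta = \wt m_H^t$, push through $id \times \mathbf{s}$, $id \times (\mathbf{i}\circ\mathbf{e})$, $\mathbf{m} = \wt m_V$, and collapse back to $a$ by splitting and recombining squares) together with your explanation that $\mathbf{i}\circ\mathbf{e}$ is forced by the target-core/source-core discrepancy is precisely the paper's intended argument and remark. Your second route is genuinely different and worth noting: conjugating the left-unit identity by $\mathbf{i}$ and using that $\mathbf{i} = \wt i_V \circ \wt i_H$ is an involutive anti-automorphism of both $\wt m_V$ and $\wt m_H$ (each groupoid inversion being an anti-automorphism of its own structure and an automorphism of the other, so $\mathbf{i}\circ\mathbf{m} = \mathbf{m}\circ\sigma\circ(\mathbf{i}\times\mathbf{i})$ and $\Delta\circ\mathbf{i} = \sigma\circ(\mathbf{i}\times\mathbf{i})\circ\Delta$), together with $\mathbf{t}\circ\mathbf{i} = \mathbf{s}$, yields the set-theoretic equality of relations with no new square manipulations; what it does not give for free is the strong transversality of the specific factorization $\mathbf{m}\circ(id\times(\mathbf{i}\circ\mathbf{e}))\circ(id\times\mathbf{s})\circ\Delta$, which you correctly flag as needing the same domain/uniqueness analysis as the direct route (and which is the same level of rigor the paper itself applies in Proposition~\ref{prop:sections}). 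One small inaccuracy: the fibre of $\Delta$ over $a$ is parametrized by the choice of the factor $a_2$ (equivalently $a_1$) with matching side in $V$, not merely by the intermediate element $\wt s_H(a_1) = \wt t_H(a_2)$; this does not affect either route, since the domain conditions imposed by $\mathbf{s}$, $\mathbf{e}$, and $\mathbf{m}$ are what cut the ambiguity down in the end.
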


The following brings in an antipode/inverse-like condition on $\mathbf{i}$.

\begin{prop}
The following diagram commutes and all compositions are strongly transversal:
\begin{center}
\begin{tikzpicture}[>=angle 90]
	\node (UL) at (0,1) {$D \times D$};
	\node (UR) at (6,1) {$D \times D$};
	\node (LL) at (0,-1) {$D$};
	\node (LM) at (3,-1) {$C$};
	\node (LR) at (6,-1) {$D$};

	\tikzset{font=\scriptsize};
	\draw[->] (UL) to node [above] {$id \times \mathbf{i}$} (UR);
	\draw[->] (LL) to node [left] {$\Delta$} (UL);
	\draw[->] (UR) to node [right] {$\mathbf{m}$} (LR);
	\draw[->] (LL) to node [above] {$\mathbf{t}$} (LM);
	\draw[->] (LM) to node [above] {$\mathbf{e}$} (LR);
\end{tikzpicture}
\end{center}
\end{prop}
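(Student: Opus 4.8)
The plan is to chase elements through the two composites in the square calculus for $D$, exactly as in Proposition~\ref{prop:sections} and the two propositions just before it. For $a\in D$, the relation $\Delta$ records the factorizations $a=a_1\,\wt\circ_H\,a_2$ in the top groupoid $D\rightrightarrows V$; applying $id\times\mathbf{i}$ turns one into $(a_1,\wt i_V\wt i_H(a_2))$, and $\mathbf{m}=\wt m_V$ then composes it to $a_1\,\wt\circ_V\,\wt i_V\wt i_H(a_2)$. Since $\wt s_V$ is a homomorphism of the top groupoid into $H\rightrightarrows M$ we have $\wt s_V(a)=\wt s_V(a_1)\circ_H\wt s_V(a_2)$ for every factorization, so the composability needed for $\mathbf{m}$ — namely $\wt s_V(a_1)=\wt t_V(\wt i_V\wt i_H(a_2))=i_H(\wt s_V(a_2))$ — holds for one factorization iff it holds for all, iff $\wt s_V(a)$ is a unit, i.e. iff $a\in\wt s_V^{-1}(1^H M)=Y$; this is also the domain of $\mathbf{e}\circ\mathbf{t}$. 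So the claim reduces to showing that for $a\in Y$,
\[
\{\,a_1\,\wt\circ_V\,\wt i_V\wt i_H(a_2)\ :\ a=a_1\,\wt\circ_H\,a_2\,\}=X^\perp_{\mathbf{t}(a)},
\]
the characteristic leaf of $X:=\wt s_H^{-1}(1^V M)$ through the core point $\mathbf{t}(a)=a\,\wt\circ_V\,\wt 1^H_{i_V(\wt s_H(a))}$, since $\mathbf{e}(\mathbf{t}(a))$ equals this leaf by Lemma~\ref{lem:fol} and~(\ref{unit}).

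First I would evaluate the left-hand side on the distinguished factorization with $a_1=a$ and $a_2$ the top-groupoid unit through $a$. Then $\wt i_H$ fixes the unit $a_2$, and since the unit section $\wt 1^H: V\to D$ of the top groupoid is a homomorphism of the left groupoid $D\rightrightarrows H$ it intertwines $i_V$ with $\wt i_V$; hence $\wt i_V\wt i_H(a_2)=\wt 1^H_{i_V(\wt s_H(a))}$ and the value is $a\,\wt\circ_V\,\wt 1^H_{i_V(\wt s_H(a))}=\mathbf{t}(a)$ itself. So $\mathbf{t}(a)$ lies in the set on the left, and it remains to see that varying the factorization sweeps out the whole leaf.

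To do that I would write a general factorization as $a_1'=a\,\wt\circ_H\,g$, $a_2'=\wt i_H(g)\,\wt\circ_H\,a_2$ with $g$ ranging over the relevant $\wt t_H$-fiber, and simplify the value using that $\wt i_H$ is an anti-homomorphism for $\wt\circ_H$, that $\wt i_V$ is a homomorphism for $\wt\circ_H$, and the interchange law for $\wt\circ_H$ and $\wt\circ_V$:
\[
a_1'\,\wt\circ_V\,\wt i_V\wt i_H(a_2')=\bigl(a\,\wt\circ_V\,\wt 1^H_{i_V(\wt s_H(a))}\bigr)\,\wt\circ_H\,\bigl(g\,\wt\circ_V\,\wt i_V(g)\bigr)=\mathbf{t}(a)\,\wt\circ_H\,\wt 1^V_{\wt t_V(g)}.
\]
As $g$ runs over its fiber, $\wt t_V(g)$ runs over $t_H^{-1}(m)$ where $\wt s_H(\mathbf{t}(a))=1^V_m$, so by Lemma~\ref{lem:fol} these elements fill out precisely $X^\perp_{\mathbf{t}(a)}$, giving the desired equality of relations. (Equivalently, one can run the computation in pictures, filling a $2\times 2$ array with $a_1$, $a_2$, $\wt i_H(a_2)$, $\wt i_V\wt i_H(a_2)$ and unit squares and composing first in one direction and then the other, exactly as in the preceding proof.)

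The main obstacle is the index bookkeeping in this last step: one has to confirm that the factorization parameter $g$ in $\Delta$ matches the parameter $\lambda=\wt t_V(g)$ in $\mathbf{e}$, so that the two relations genuinely coincide rather than one merely containing the other, and this is where the full double-groupoid compatibility — the interchange law together with the cross-compatibility of inverses and unit sections — is used, not just the separate groupoid axioms. Strong transversality of every composition in the diagram then follows as in Proposition~\ref{prop:sections}: $\mathbf{t}$ and $\mathbf{e}^t$ are reductions and $\mathbf{i}$ is a symplectomorphism, so the pairings involving them are automatically strongly transversal, and the only genuinely constrained composition — $\mathbf{m}$ applied to $(a_1,\wt i_V\wt i_H(a_2))$ — is cut out cleanly by the condition examined in the first paragraph.
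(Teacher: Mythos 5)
Your proposal is correct and follows essentially the same route as the paper: an element-wise computation in the square calculus showing that each value $a_1\,\wt\circ_V\,\wt i_V\wt i_H(a_2)$ coming from a factorization $a=a_1\,\wt\circ_H\,a_2$ coincides with $\mathbf t(a)\,\wt\circ_H\,\wt 1^V_\lambda$ via the interchange law, with strong transversality handled exactly as in Proposition~\ref{prop:sections}. Your version is in fact slightly more careful than the paper's, since by parametrizing factorizations by $g$ and invoking surjectivity of the double source map you verify that the image sweeps out the whole leaf $\mathbf e(\mathbf t(a))$, i.e.\ both containments of the two relations, whereas the paper only displays one.
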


\begin{proof}
Let $a \in D$. The composition $\mathbf{m} \circ (id \times \mathbf{i}) \circ \Delta$ looks like
\[
a \mapsto a_1\ \wt\circ_V\ \wt i_V \wt i_{H}(a_2)
\]
where $a = a_1 \wt\circ_{H} a_2$. The composition $\mathbf{e} \circ \mathbf{t}$ is
\[
a \mapsto (a\ \wt\circ_V\ \wt 1^{H}_{i_V(\wt s_{H}(a))})\ \wt\circ_{H}\ \wt 1^V_\lambda
\]
for $\lambda$ in the same leaf as $a$ of the characteristic foliation of $\wt s_{H}^{-1}(1^V M)$. We must show that anything of the form resulting from the first relation is equivalently of the form resulting from the second. In particular, writing $a$ as $a = a_1 \wt\circ_{H} a_2$, we must show that
\[
a_1\ \wt\circ_V\ \wt i_V \wt i_{H}(a_2) =  [(a_1 \wt\circ_{H} a_2)\ \wt\circ_V\ \wt 1^{H}_{i_V(\wt s_{H}(a_2))}]\ \wt\circ_{H}\ \wt 1^V_\lambda.
\]

For this we proceed as follows. The expression on the right is
\begin{center}
\begin{tikzpicture}
	\draw (0,0) rectangle (2,2);
	\node at (1,1) {$a_1$};
	\draw (0,2) rectangle (2,4);
	\node at (1,3) {$a_2$};
	\draw (2,0) rectangle (4,4);
	\node at (3,2) {$\,\wt 1^{H}_{i_V(\wt s_{H}(a_2))}$};
	\draw (0,4) rectangle (4,6);
	\node at (2,5) {$\wt 1^V_\lambda$};
	
	\node at (5,3) {$=$};
	
	\draw (6,0) rectangle (8,2);
	\node at (7,1) {$a_1$};
	\draw (6,2) rectangle (8,4);
	\node at (7,3) {$a_2$};
	\draw (8,0) rectangle (10,2);
	\node at (9,1) {$\,\wt 1^{H}_{i_V(\wt s_{H}(a_2))}$};
	\draw (8,2) rectangle (10,4);
	\node at (9,3) {$\,\wt 1^{H}_{i_V(\wt s_{H}(a_2))}$};
	\draw (6,4) rectangle (8,6);
	\node at (7,5) {$\wt i_{H}(a_2)$};
	\draw (8,4) rectangle (10,6);
	\node at (9,5) {$\wt i_V\wt i_{H}(a_2)$};
	
	\node at (11,3) {$=$};
	
	\draw (12,1) rectangle (14,3);
	\node at (13,2) {$a_1$};
	\draw (14,1) rectangle (16,3);
	\node at (15,2) {$\,\wt 1^{H}_{i_V(\wt s_{H}(a_2))}$};
	\draw (12,3) rectangle (14,5);
	\node at (13,4) {$\wt 1^{H}_{\wt t_{H}(a_2)}$};
	\draw (14,3) rectangle (16,5);
	\node at (15,4) {$\wt i_V\wt i_{H}(a_2)$};
\end{tikzpicture}
\end{center}
where in the first step we have decomposed the right box vertically and the top box horizontally, and in the second we have composed the top four boxes vertically. The desired equality now follows by composing the remaining boxes vertically.

Strong transversality in $\mathbf{m} \circ (id \times \mathbf{i}) \circ \Delta$ follows from $\mathbf{i}$ being a symplectomorphism, and in $\mathbf{e} \circ \mathbf{t}$ from $\mathbf{t}$ being a reduction, or $\mathbf{e}$ being a coreduction.
\end{proof}

Again, the proof of the following is very similar to that of the above proposition and is omitted.

\begin{prop}
The following diagram commutes and all compositions are strongly transversal:
\begin{center}
\begin{tikzpicture}[>=angle 90]
	\node (UL) at (0,1) {$D \times D$};
	\node (UR) at (6,1) {$D \times D$};
	\node (LL) at (0,-1) {$D$};
	\node (LM) at (3,-1) {$C$};
	\node (LR) at (6,-1) {$D$};

	\tikzset{font=\scriptsize};
	\draw[->] (UL) to node [above] {$\mathbf{i} \times id$} (UR);
	\draw[->] (LL) to node [left] {$\Delta$} (UL);
	\draw[->] (UR) to node [right] {$\mathbf{m}$} (LR);
	\draw[->] (LL) to node [above] {$\mathbf{s}$} (LM);
	\draw[->] (LM) to node [above] {$\mathbf{i} \circ \mathbf{e}$} (LR);
\end{tikzpicture}
\end{center}
\end{prop}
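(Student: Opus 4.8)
The plan is to mimic, almost verbatim, the proof just given for the preceding proposition. One tests commutativity on a point $a\in\dom\Delta$, written $a=a_1\,\wt\circ_H\,a_2$ with $a_2$ on top and $a_1$ on the bottom as in the figures above. Unwinding the two legs, the composite $\mathbf m\circ(\mathbf i\times id)\circ\Delta$ sends $a$ to $\wt i_V\wt i_H(a_1)\,\wt\circ_V\,a_2$, while $(\mathbf i\circ\mathbf e)\circ\mathbf s$ sends $a$ first to the source-core element $\mathbf s(a)=\wt i_V\wt i_H(a)\,\wt\circ_V\,\wt 1^H_{\wt t_H(a)}$, then attaches the unit square $\wt 1^V_\lambda$ via $\wt\circ_H$ (for $\lambda$ ranging over the appropriate characteristic leaf), and finally applies $\mathbf i=\wt i_V\wt i_H$. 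So the proposition reduces to the square identity
\[
\wt i_V\wt i_H(a_1)\,\wt\circ_V\,a_2 \;=\; \wt i_V\wt i_H\!\left[\bigl(\wt i_V\wt i_H(a)\,\wt\circ_V\,\wt 1^H_{\wt t_H(a)}\bigr)\,\wt\circ_H\,\wt 1^V_\lambda\right],
\]
holding once the decomposition $a=a_1\,\wt\circ_H\,a_2$ is matched with a suitable $\lambda$, together with the fact that this matching is onto the full leaf. I would check the square identity exactly as in the figure-computation of the previous proof: expand the attached unit and the rotated factor $\wt i_V\wt i_H(a)=\wt i_V\wt i_H(a_2)\,\wt\circ_H\,\wt i_V\wt i_H(a_1)$ (using that $\wt i_V\wt i_H$ reverses $\wt\circ_H$), recompose the grid horizontally using that the relevant $\wt 1^H$-squares are identities for $\wt\circ_V$, and then collapse vertically; the role played in the earlier proof by the unit compatibility $\wt 1^V\circ 1^H=\wt 1^H\circ 1^V$ is played here by the same identity read in the opposite groupoids.

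A cleaner alternative, which I would likely prefer to write up, is to deduce the statement algebraically from the preceding proposition. By the double-groupoid interchange law the symplectomorphism $\mathbf i=\wt i_V\wt i_H$, which satisfies $\mathbf i^2=id$, anti-intertwines both groupoid products: $\mathbf i\circ\mathbf m=\mathbf m\circ\sigma\circ(\mathbf i\times\mathbf i)$ and $\Delta\circ\mathbf i=(\mathbf i\times\mathbf i)\circ\sigma\circ\Delta$, the second obtained from the analogue for $\wt m_H$ by transposing (using $\mathbf i^t=\mathbf i$ and $\Delta^t=\wt m_H$). Pre- and post-composing the identity $\mathbf m\circ(id\times\mathbf i)\circ\Delta=\mathbf e\circ\mathbf t$ of the previous proposition with $\mathbf i$, the left-hand side simplifies to $\mathbf m\circ(\mathbf i\times id)\circ\Delta$ via $(\mathbf i\times\mathbf i)\circ(id\times\mathbf i)\circ(\mathbf i\times\mathbf i)=id\times\mathbf i$ and $\sigma\circ(id\times\mathbf i)\circ\sigma=\mathbf i\times id$, while the right-hand side becomes $\mathbf i\circ\mathbf e\circ(\mathbf t\circ\mathbf i)=\mathbf i\circ\mathbf e\circ\mathbf s$ by the identity $\mathbf t\circ\mathbf i=\mathbf s$ from Proposition~\ref{prop:sections}. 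Every step here inserts only graphs of symplectomorphisms ($\mathbf i$ and $\sigma$), so associativity and strong transversality of the intermediate composites are automatic and the claim follows.

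For strong transversality of the two legs in the proposition itself, the argument is as in the previous propositions: on the right, $\mathbf s$ is a reduction and $\mathbf e$ a coreduction, so every composition in $(\mathbf i\circ\mathbf e)\circ\mathbf s$ is strongly transversal (a pair is always strongly transversal when one leg is a surmersion or a cosurmersion, and $\mathbf i$ is a symplectomorphism), while on the left $\mathbf i\times id$ is a symplectomorphism and the fibered products underlying $\mathbf m=\wt m_V$ and $\Delta^t=\wt m_H$ are transversal because they come from Lie-groupoid multiplications. I expect the main obstacle in the direct approach to be the bookkeeping of nested inverses — the analogue of what forced $\mathbf i\circ\mathbf e$ rather than $\mathbf e$ in the preceding proposition: $\mathbf s$ already factors through the target-core as $\mathbf t\circ\mathbf i$ and so carries a built-in $\wt i_V\wt i_H$, and then $\mathbf i$ is applied once more, so one must verify that these inverses cancel so that the result lands back on the source-core and that the matching between decompositions and leaf parameters $\lambda$ is a bijection; once this is settled, both commutativity and the strong transversality of the $\mathbf e\circ\mathbf s$ portion drop out. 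The algebraic route sidesteps this entirely, replacing it by the purely mechanical check that $\mathbf i$ anti-intertwines the two products.
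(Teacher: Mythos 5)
Your proposal is correct, but your preferred write-up takes a genuinely different route from the paper's: the paper omits this proof, indicating it is to be carried out by the same square-by-square computation as the preceding proposition (your first, direct sketch is exactly that route, and your reduction of the claim to a single square identity with the matching of decompositions $a=a_1\,\wt\circ_H\,a_2$ to leaf parameters $\lambda$ is the right bookkeeping for it). Your algebraic derivation instead conjugates the already-proved identity $\mathbf m\circ(id\times\mathbf i)\circ\Delta=\mathbf e\circ\mathbf t$ by $\mathbf i$, using that $\mathbf i=\wt i_V\circ\wt i_H$ is an involutive anti-automorphism of both groupoid products, i.e. $\mathbf i\circ\mathbf m=\mathbf m\circ\sigma\circ(\mathbf i\times\mathbf i)$ and $\Delta\circ\mathbf i=(\mathbf i\times\mathbf i)\circ\sigma\circ\Delta$ (each $\wt i$ is an anti-automorphism of its own structure and an automorphism of the other, by the double groupoid axioms), together with $\mathbf t\circ\mathbf i=\mathbf s$ from Proposition~\ref{prop:sections}; the computation $(id\times\mathbf i)\circ(\mathbf i\times\mathbf i)=\mathbf i\times id$ and $\sigma\circ(\mathbf i\times id)\circ\sigma=id\times\mathbf i$ then collapses the left leg to $\mathbf m\circ(\mathbf i\times id)\circ\Delta$, as you say. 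What this buys is economy: no new square manipulations, and both commutativity and strong transversality are inherited from the previous proposition because every inserted factor is the graph of a symplectomorphism, and pre- and post-composition with such graphs preserves the set-theoretic composite and the transversality/embedding conditions. What the paper's direct route buys is an explicit pointwise description of both legs in the square calculus used throughout the section. If you write up the algebraic version, record the two small verifications it leans on: that $\mathbf i$ intertwines the relevant sources and targets so the anti-intertwining identities hold as equalities of relations (composability and domains included), and that conjugation by graphs of symplectomorphisms preserves strong transversality; both are routine.
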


\subsection{Symplectic Hopfoids}
As stated before, the results of the previous propositions suggest that the structure $D \rightrightarrows C$ resulting from a symplectic double groupoid is similar to that of a ``groupoid''. This motivates the following definition:

\begin{defn}\label{defn:symp-hopf}
A \emph{symplectic hopfoid} $D \rightrightarrows C$ consists of the following data:
\begin{itemize}
\item a strongly positive symplectic $*$-comonoid $D$ and a symplectic submanifold $C \subseteq D$,
\item reductions $\mathbf{t}, \mathbf{s}: D \red C$ called the target and source respectively,
\item a coreduction $\mathbf{e}: C \cored D$ called the unit,
\item a canonical relation $\mathbf{m}: D \times D \to D$ called the product, and
\item a symplectomorphism $\mathbf{i}: D \to D$ called the antipode which preserves the counit of $D$
\end{itemize}
satisfying the following requirements:
\begin{enumerate}[(i)]
\item $\mathbf{t} \circ \mathbf{e} = id_C = \mathbf{s} \circ \mathbf{e}$,
\item $\mathbf{t} \circ \mathbf{i} = \mathbf{s}$ and $\mathbf{s} \circ \mathbf{i} = \mathbf{t}$,
\item (associativity) the diagram
\begin{center}
\begin{tikzpicture}[>=angle 90]
	\node (UL) at (0,1) {$D \times D \times D$};
	\node (UR) at (5,1) {$D \times D$};
	\node (LL) at (0,-1) {$D \times D$};
	\node (LR) at (5,-1) {$D$};

	\tikzset{font=\scriptsize};
	\draw[->] (UL) to node [above] {$id \times \mathbf{m}$} (UR);
	\draw[->] (UL) to node [left] {$\mathbf{m} \times id$} (LL);
	\draw[->] (UR) to node [right] {$\mathbf{m}$} (LR);
	\draw[->] (LL) to node [above] {$\mathbf{m}$} (LR);
\end{tikzpicture}
\end{center}
commutes,
\item (antipode) $\mathbf{i}^2 = id_D$, $\mathbf{i}$ commutes with the $*$-structure of $D$, and the diagram
\begin{center}\label{diag:antipode}
\begin{tikzpicture}[>=angle 90]
	\node (U1) at (0,1) {$D \times D$};
	\node (U2) at (3,1) {$D \times D$};
	\node (U3) at (6,1) {$D \times D$};
	\node (L1) at (0,-1) {$D$};
	\node (L3) at (6,-1) {$D$};

	\tikzset{font=\scriptsize};
	\draw[->] (U1) to node [above] {$\sigma$} (U2);
	\draw[->] (U2) to node [above] {$\mathbf{i} \times \mathbf{i}$} (U3);
	\draw[->] (U1) to node [left] {$\mathbf{m}$} (L1);
	\draw[->] (U3) to node [right] {$\mathbf{m}$} (L3);
	\draw[->] (L1) to node [above] {$\mathbf{i}$} (L3);
\end{tikzpicture}
\end{center}
where $\sigma: D \times D \to D \times D$ is the symplectomorphism exchanging components, commutes,
\item (compatibility between product and coproduct) the diagram\label{diag:product}
\begin{center}
\begin{tikzpicture}[>=angle 90]
	\node (U1) at (0,1) {$D \times D$};
	\node (U2) at (3,1) {$D$};
	\node (U3) at (6,1) {$D \times D$};
	\node (L1) at (0,-1) {$D \times D \times D \times D$};
	\node (L3) at (6,-1) {$D \times D \times D \times D$};

	\tikzset{font=\scriptsize};
	\draw[->] (U1) to node [above] {$\mathbf{m}$} (U2);
	\draw[->] (U2) to node [above] {$\Delta$} (U3);
	\draw[->] (U1) to node [left] {$\Delta \times \Delta$} (L1);
	\draw[->] (L3) to node [right] {$\mathbf{m} \times \mathbf{m}$} (U3);
	\draw[->] (L1) to node [above] {$id \times \sigma \times id$} (L3);
\end{tikzpicture}
\end{center}
where $\sigma: D \times D \to D \times D$ is the symplectomorphism exchanging components, commutes,
\item (left and right units) the diagrams\label{diag:units}
\begin{center}
\begin{tikzpicture}[>=angle 90]
	\node (UL) at (0,1) {$D \times D$};
	\node (UM) at (2.5,1) {$C \times D$};
	\node (UR) at (5,1) {$D \times D$};
	\node (LL) at (0,-1) {$D$};
	\node (LR) at (5,-1) {$D$};
	
	\node at (6.5,0) {$\text{and}$};
	
	\node (UL2) at (8,1) {$D \times D$};
	\node (UM2) at (10.5,1) {$D \times C$};
	\node (UR2) at (13.5,1) {$D \times D$};
	\node (LL2) at (8,-1) {$D$};
	\node (LR2) at (13.5,-1) {$D$};

	\tikzset{font=\scriptsize};
	\draw[->] (UL) to node [above] {$\mathbf{t} \times id$} (UM);
	\draw[->] (UM) to node [above] {$\mathbf{e} \times id$} (UR);
	\draw[->] (LL) to node [left] {$\Delta$} (UL);
	\draw[->] (UR) to node [right] {$\mathbf{m}$} (LR);
	\draw[->] (LL) to node [above] {$id$} (LR);
	
	\draw[->] (UL2) to node [above] {$id \times \mathbf{s}$} (UM2);
	\draw[->] (UM2) to node [above] {$id \times (\mathbf{i} \circ \mathbf{e})$} (UR2);
	\draw[->] (LL2) to node [left] {$\Delta$} (UL2);
	\draw[->] (UR2) to node [right] {$\mathbf{m}$} (LR2);
	\draw[->] (LL2) to node [above] {$id$} (LR2);
\end{tikzpicture}
\end{center}
commute, and
\item (left and right inverses) the diagrams\label{diag:inverses}
\begin{center}
\begin{tikzpicture}[>=angle 90]
	\node (UL) at (0,1) {$D \times D$};
	\node (UR) at (5,1) {$D \times D$};
	\node (LL) at (0,-1) {$D$};
	\node (LM) at (2.5,-1) {$C$};
	\node (LR) at (5,-1) {$D$};
	
	\node at (6.5,0) {$\text{and}$};
	
	\node (UL2) at (8,1) {$D \times D$};
	\node (UR2) at (13,1) {$D \times D$};
	\node (LL2) at (8,-1) {$D$};
	\node (LM2) at (10.5,-1) {$C$};
	\node (LR2) at (13,-1) {$D$};

	\tikzset{font=\scriptsize};
	\draw[->] (UL) to node [above] {$id \times \mathbf{i}$} (UR);
	\draw[->] (LL) to node [left] {$\Delta$} (UL);
	\draw[->] (UR) to node [right] {$\mathbf{m}$} (LR);
	\draw[->] (LL) to node [above] {$\mathbf{t}$} (LM);
	\draw[->] (LM) to node [above] {$\mathbf{e}$} (LR);
	
	\draw[->] (UL2) to node [above] {$\mathbf{i} \times id$} (UR2);
	\draw[->] (LL2) to node [left] {$\Delta$} (UL2);
	\draw[->] (UR2) to node [right] {$\mathbf{m}$} (LR2);
	\draw[->] (LL2) to node [above] {$\mathbf{s}$} (LM2);
	\draw[->] (LM2) to node [above] {$\mathbf{i} \circ \mathbf{e}$} (LR2);
\end{tikzpicture}
\end{center}
commute,
\end{enumerate}
where $\Delta := \Delta_D$ is the coproduct of the comonoid structure on $D$. We require that all compositions above be strongly transversal.
\end{defn}

\begin{rmk}
As mentioned previously, the ``hopf'' in the term hopfoid comes from the requirement that the coproduct and counit be specified as part of the data. Also, the requirement that $\mathbf t$ and $\mathbf s$ be reductions actually implies that they are partially-defined smooth maps.
\end{rmk}

In summary then, we have the following:

\begin{thrm}
Let $D$ be a symplectic double groupoid. Then the object $D \rightrightarrows C$ resulting from the constructions of the previous sections is a symplectic hopfoid.
\end{thrm}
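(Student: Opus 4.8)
The plan is to verify, one item at a time, that the data $(D,C,\mathbf{t},\mathbf{s},\mathbf{e},\mathbf{m},\mathbf{i})$ produced in the preceding sections satisfies every clause of Definition~\ref{defn:symp-hopf}; since most of the substance is already contained in the results just proved, the proof is largely an act of assembly. First I would check the ambient structural data. The top groupoid $D \rightrightarrows V$ is a symplectic groupoid, so Zakrzewski's theorem makes $D$, with the monoid structure coming from the top groupoid, a strongly positive symplectic $*$-monoid whose $*$-structure is the top inverse $\wt i_H$; reversing all arrows shows that $(D,\Delta,\varepsilon,\wt i_H)$ is a strongly positive symplectic $*$-comonoid, where $\Delta=\wt m_H^t$. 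That $C$ is a symplectic submanifold of $D$ is Mackenzie's theorem quoted above. The relations $\mathbf{t}$ and $\mathbf{e}^t$ are symplectic reduction relations by Theorem~\ref{thrm:core} and the surrounding discussion, hence reductions, so $\mathbf{e}$ is a coreduction; $\mathbf{s}$ is the reduction relation of $\wt t_V^{-1}(1^H M)$ post-composed with the symplectomorphism $\wt i_V\wt i_H$, and is therefore again a reduction; $\mathbf{m}=\wt m_V$ is a canonical relation because the left groupoid is symplectic; and $\mathbf{i}=\wt i_V\circ\wt i_H$ is a symplectomorphism, being a composite of two anti-symplectomorphisms, and it preserves the counit since $\wt i_V$ and $\wt i_H$ each preserve the Lagrangian $\wt 1^H(V)\subseteq D$ defining $\varepsilon$.

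Next I would run through the axioms (i)--(vii). Conditions (i) and (ii) are precisely Proposition~\ref{prop:sections}. Condition (iii) is associativity of $\mathbf{m}=\wt m_V$: both composites in the square are the canonical relation $D\times D\times D\to D$ sending a $\wt\circ_V$-composable triple to its unambiguous product, exactly as in the verification that a symplectic groupoid is a symplectic monoid. For condition (iv): $\mathbf{i}^2=(\wt i_V\wt i_H)^2=\wt i_V^2\wt i_H^2=id$, using that $\wt i_V$ and $\wt i_H$ commute---a double-groupoid compatibility, visible in square notation as the fact that $\mathbf{i}$ is the $180^\circ$ rotation; the same commutation shows $\mathbf{i}$ commutes with the $*$-structure $\wt i_H$; and the antipode square reduces to the identity $\mathbf{i}(a\wt\circ_V b)=\mathbf{i}(b)\wt\circ_V\mathbf{i}(a)$, which holds because $\wt i_V$ is an anti-homomorphism and $\wt i_H$ a homomorphism for $\wt\circ_V$ (again double-groupoid compatibilities). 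Condition (v) is the interchange law of the double groupoid: unwinding the two composites, commutativity becomes the equality $(a_1\wt\circ_V b_1)\wt\circ_H(a_2\wt\circ_V b_2)=(a_1\wt\circ_H a_2)\wt\circ_V(b_1\wt\circ_H b_2)$ over all $\wt\circ_H$-decompositions of $\wt\circ_V$-composable $a,b$, which is exactly the compatibility built into Definition~\ref{defn:dbl-grpd}. Finally, conditions (vi) and (vii) are precisely the four propositions established immediately before Definition~\ref{defn:symp-hopf}.

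The part that genuinely needs care---and the main obstacle---is strong transversality in the associativity and product/coproduct diagrams of (iii) and (v): there, unlike in (i), (ii), (vi), (vii), the relations in play ($\mathbf{m}$ and $\Delta$) are neither reductions, nor coreductions, nor graphs of symplectomorphisms, so the automatic strong-transversality criterion does not apply. Here one uses that the graph of a Lie groupoid multiplication meets the relevant diagonals cleanly because source and target are submersions and the composable-pairs sets $D\times_V D$ and $D\times_H D$ are embedded submanifolds of $D\times D$; the standard dimension count then upgrades the clean intersections to transversal ones, and the projections are proper embeddings because the groupoid division maps are smooth. With those transversality checks in hand, the theorem follows by combining the observations above.
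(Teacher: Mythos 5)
Your proposal is correct and follows essentially the same route as the paper: the paper's own proof simply cites Proposition~\ref{prop:sections} and the four propositions preceding Definition~\ref{defn:symp-hopf} for conditions (i), (ii), (vi), (vii), and dismisses the rest as straightforward, noting in particular that the product/coproduct compatibility is exactly the double-groupoid interchange law. Your additional details---the comonoid and $*$-structure via Zakrzewski's correspondence applied to the top groupoid, the identities $\mathbf{i}^2=id$ and $\mathbf{i}(a\,\wt\circ_V\,b)=\mathbf{i}(b)\,\wt\circ_V\,\mathbf{i}(a)$ from the commuting inverses, and the transversality remarks for (iii) and (v)---are consistent with, and somewhat more explicit than, what the paper records.
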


\begin{proof}
The remaining conditions are straightforward to check; in particular, (vi) is precisely the compatibility between the two groupoid structures on $D$.
\end{proof}

\subsection{Back to General Double Groupoids}
Here we note that the above constructions can also be carried out even when there is no symplectic structure present---i.e. for a general double Lie groupoid. 
From such a double groupoid $D$ with core $C$, we can produce what we will call a \emph{Lie hopfoid} structure on $D \rightrightarrows C$ in the category $\SRel$ of smooth manifolds and smooth relations, defined via the same data and diagrams as that for a symplectic hopfoid.\footnote{In this case we require that the target and source be surmersions, that the unit be a cosurmersion, and that the antipode be a diffeomorphism.}

Indeed, suppose that $D$ is a double Lie groupoid. The description of the leaves of Lemma \ref{lem:fol} still defines a foliation on $\wt s_{H}^{-1}(1^V M)$ whose leaf space may be identified with the core $C$ in the same manner. Similarly, the core can also be realized as the leaf space of certain foliations on
\[
\wt s_V^{-1}(1^H M),\ \wt t_H^{-1}(1^V M), \text{ and } \wt t_V^{-1}(1^H M),
\]
given by the same expressions as those in the symplectic case. Then all the constructions of the previous sections produce the Lie hopfoid structure $D \rightrightarrows C$ we are interested in. For example, the ``target'' $\mathbf{t}_D: D \to C$ obtained by realizing $C$ as the leaf space of the correct foliation on $\wt s_V^{-1}(1^H M)$ is the smooth relation
\[
\mathbf{t}_D: a \mapsto a\,\wt\circ_V\,\wt 1^H_{i_V(\wt s_H(a))} \text{ for } a \in \wt s_V^{-1}(1^H M),
\]
which is the analog of equation~(\ref{target}) in the symplectic case.

\begin{rmk}
We should note that although the constructions of the previous section now work more generally in the double Lie groupoid case, the expressions for the various foliations used were discovered only through a careful study of the symplectic case where these foliations are the characteristic foliations of certain coisotropic submanifolds.
\end{rmk}

These two ``hopfoid'' structures are well-behaved with respect to the cotangent functor in the sense of the following theorem, which implies commutativity of the diagram:
\begin{center}
\begin{tikzpicture}[>=angle 90]
	\node (UL) at (0,1) {double Lie groupoids};
	\node (UR) at (6,1) {symplectic double groupoids};
	\node (LL) at (0,-1) {Lie hopfoids};
	\node (LR) at (6,-1) {symplectic hopfoids};

	\tikzset{font=\scriptsize};
	\draw[->] (UL) to node [above] {$T^*$} (UR);
	\draw[->] (UL) to node [left] {hopf} (LL);
	\draw[->] (UR) to node [right] {hopf} (LR);
	\draw[->] (LL) to node [above] {$T^*$} (LR);
\end{tikzpicture}
\end{center}
where ``hopf'' denotes the map sending double groupoids to hopfoids.

\begin{thrm}\label{thrm:lie-hopf}
Suppose that $D$ is a double Lie groupoid with core $C$ and equip $T^*D$ with the induced symplectic double groupoid structure of Theorem \ref{thrm:ctdbl} with core $T^*C$. Then the induced symplectic hopfoid structure on $T^*D \rightrightarrows T^*C$ is the one obtained by applying the cotangent functor to the Lie hopfoid $D \rightrightarrows C$ arising from $D$.
\end{thrm}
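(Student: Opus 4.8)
The plan is to show that applying the cotangent functor $T^*:\SRel\to\Symp$ to the Lie hopfoid $D\rightrightarrows C$ reproduces, datum by datum, the symplectic hopfoid $T^*D\rightrightarrows T^*C$ built from Mackenzie's symplectic double groupoid $T^*D$ of Theorem~\ref{thrm:ctdbl}. Since the assertion is only at the level of objects, it suffices to match the two underlying symplectic manifolds together with each structure canonical relation $\mathbf{m}$, $\Delta$, $\mathbf{i}$, $\mathbf{e}$, $\mathbf{t}$, $\mathbf{s}$ and the $*$-comonoid data. By the constructions of Section~\ref{chap:dbl-grpds} and the preceding subsections, every such relation for $T^*D\rightrightarrows T^*C$ is a composition in $\Symp$ of (a) the structure relations of the top and left symplectic groupoids of $T^*D$ (the products $\wt m_V$, $\wt m_H$, the units $\wt 1^V$, $\wt 1^H$, the inverses $\wt i_V$, $\wt i_H$) and (b) the reduction relations $\mathbf{e}^t$, $\mathbf{t}$, $\mathbf{s}$ realizing the core. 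Since $T^*$ is a functor, $T^*(\ell\circ\ell')=T^*\ell\circ T^*\ell'$, and pairs of cotangent lifts are always strongly transversal, it is enough to check that $T^*$ sends the corresponding $\SRel$-level data of $D\rightrightarrows C$ to the families (a) and (b).

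For (a): the symplectic double groupoid $T^*D$ of Theorem~\ref{thrm:ctdbl} has as its top and left groupoids the cotangent symplectic groupoids of the top and left Lie groupoids $D\rightrightarrows V$ and $D\rightrightarrows H$ of $D$, and the cotangent groupoid of any Lie groupoid has multiplication, unit and inversion equal to the cotangent lifts of the corresponding structure maps regarded as smooth relations (a standard fact; cf.\ Remark~\ref{gen-cot} and Example~\ref{ex:main}). Thus $\wt m_V$, $\wt m_H$, $\wt 1^V$, $\wt 1^H$, $\wt i_V$, $\wt i_H$ on $T^*D$ are the cotangent lifts of the like-named maps of $D$. In particular $\mathbf{m}_{T^*D}=\wt m_V=T^*(\mathbf{m}_D)$ and $\Delta_{T^*D}=(\wt m_H)^t=(T^*\wt m_H^D)^t=T^*(\Delta_D)$ (the last equality because $T^*$ commutes with transposition), while $\mathbf{i}_{T^*D}=\wt i_V\circ\wt i_H=T^*(\wt i_V^D)\circ T^*(\wt i_H^D)=T^*(\wt i_V^D\circ\wt i_H^D)=T^*(\mathbf{i}_D)$; the $*$-comonoid structure on $T^*D$ (its counit and $*$-structure), being built from the same top cotangent groupoid, is likewise the $T^*$-image of that of $D$. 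Finally the core of $T^*D$ is $T^*C$ by Theorem~\ref{thrm:ctdbl}, matching $T^*$ applied to the core $C$ of $D$.

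The substance of the theorem is (b). I claim the reduction $\mathbf{e}^t:T^*D\red T^*C$ obtained by reducing $\wt s_H^{-1}(1^V M)\subseteq T^*D$ is the cotangent lift $T^*(\mathbf{e}^t_D)$, where $\mathbf{e}^t_D:D\to C$ is the $\SRel$-level reduction — a surmersion with domain the submanifold $X_D:=\wt s_H^{-1}(1^V M)\subseteq D$ and fibers the leaves $X^\perp_a$ of Lemma~\ref{lem:fol}. Since the cotangent lift of a surmersion is a reduction, $T^*(\mathbf{e}^t_D)$ is a reduction; factoring $\mathbf{e}^t_D$ through the inclusion $X_D\hookrightarrow D$ and the quotient submersion $X_D\twoheadrightarrow X_D/\F\cong C$ and applying Example~\ref{ex:leaf} twice, one identifies $T^*(\mathbf{e}^t_D)$ as the reduction of the coisotropic submanifold of $T^*D$ consisting of the covectors based at points of $X_D$ that annihilate the tangents to the leaves of $\F$, with leaf space canonically $X_D/\F\cong C$, hence $T^*C$. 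It therefore remains to show that this coisotropic submanifold, with its characteristic foliation and the resulting identification of its leaf space with $C$, coincides with $\wt s_H^{-1}(1^V M)\subseteq T^*D$ together with the characteristic foliation described by Lemma~\ref{lem:fol} and the identification of Section~\ref{sec:reduc}. This is a direct computation combining the description of $\wt s_H$ on $T^*D$ as a cotangent lift from step (a), the description recorded after Theorem~\ref{thrm:ctdbl} of the units of the right groupoid $A^*V\rightrightarrows A^*C$ via the conormal bundle $N^*V\subseteq T^*D$, and the explicit leaves of Lemma~\ref{lem:fol}. The analogous identifications for $\wt s_V^{-1}(1^H M)$, $\wt t_H^{-1}(1^V M)$ and $\wt t_V^{-1}(1^H M)$ — hence for $\mathbf{t}_{T^*D}$, $\mathbf{s}_{T^*D}$, $\mathbf{e}_{T^*D}$, and (using step (a) and functoriality) for $\mathbf{i}\circ\mathbf{e}$ — follow by the same argument after interchanging the roles of $H$ and $V$ and of source and target.

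I expect this last identification to be the only genuinely technical point: it amounts to keeping the bookkeeping of Mackenzie's construction of $T^*D$ consistent — in particular the identifications $A^*(D\rightrightarrows V)\cong A^*V$, $A^*(D\rightrightarrows H)\cong A^*H$, and the realization of the core as $A^*C$ — with the purely $\SRel$-level formulas for $\mathbf{e}$, $\mathbf{t}$, $\mathbf{s}$ of the previous sections, and then checking that the characteristic foliation of Lemma~\ref{lem:fol} is exactly the fibration of $\wt s_H^{-1}(1^V M)\subseteq T^*D$ by the fibers of the cotangent-lifted surmersion. Granting this, every commuting square in Definition~\ref{defn:symp-hopf} for $T^*D\rightrightarrows T^*C$ is literally the image under $T^*$ of the corresponding square for $D\rightrightarrows C$, and strong transversality of all compositions is automatic since each relation involved is a cotangent lift; so no further verification is needed.
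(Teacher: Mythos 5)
Your route is the same as the paper's: handle the product, coproduct, antipode and comonoid data formally from the definition of the cotangent groupoid structures on $T^*D$, and reduce the real content to identifying the core-realizing reductions of $T^*D$ with the cotangent lifts of the corresponding smooth relations of $D \rightrightarrows C$, by factoring each smooth relation through the inclusion of its domain and the quotient submersion onto $C$ and applying Example~\ref{ex:leaf} twice. Up to that point your proposal and the paper's proof coincide, including the observation that the domain of the lifted relation is the set of covectors along $X$ annihilating the tangent spaces to the leaves.

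The gap is that the step you describe as ``a direct computation'' and explicitly defer is precisely the substance of the paper's proof; without it the theorem is asserted rather than proved. Concretely, one must show (say for the target) that the domain of $\mathbf t: T^*D \to T^*C$, namely the set of $(a,\xi) \in T^*D$ with $a \in \wt s_V^{-1}(1^H M)$ whose image under the source $\alpha$ of the left groupoid structure on $T^*D$ lies in the unit submanifold of the bottom groupoid, coincides with $\{(a,\xi) : a \in X,\ \xi|_{T_a\F_\pi} = 0\}$, the domain of $T^*(\mathbf t_D)$. In the paper this requires unwinding the identification of the relevant units inside $T^*D$ (note, as a sign that this bookkeeping is not automatic, that after Theorem~\ref{thrm:ctdbl} the units of $A^*V \rightrightarrows A^*C$ are read off from the identification $A^*H \cong N^*H$, not from $N^*V$ as you cite), using the splitting $TD|_H = TH \oplus \ker(d\wt t_V)|_H$ to convert the unit condition into the statement that $\xi$ vanishes on $(dL_a)_{\wt 1^V 1^H_m}(d\wt 1^H)_{1^V_m}\bigl(\ker(dt_V)_{1^V_m}\bigr)$, and then recognizing this subspace as exactly the tangent distribution to $\F_\pi$ via the smooth analogue of equation~(\ref{eq:tang-dist}) from Lemma~\ref{lem:fol}; only then can one conclude, both relations being reductions with the same coisotropic domain and the same leaf-space identification with $T^*C$, that they are equal. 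You have isolated the right place and listed the right ingredients, but the verification itself is missing. Two smaller points: your appeal to ``strong transversality is automatic since each relation is a cotangent lift'' is unjustified for lifts of general relations (it is only stated for lifts of maps); it is also unnecessary here, since the hopfoid axioms and their transversality for $T^*D \rightrightarrows T^*C$ already follow from the general double-groupoid-to-hopfoid construction applied to $T^*D$, so the theorem only requires matching the structure morphisms, not re-deriving the commuting diagrams as images under $T^*$.
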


For instance, consider the double groupoid of Example \ref{ex:dmain}, and the corresponding symplectic double groupoid of Example \ref{ex:main}:
\begin{center}
\begin{tikzpicture}[>=angle 90]
	\node (LL) at (0,0) {$M$};
	\node (LR) at (2,0) {$M.$};
	\node (UL) at (0,2) {$G$};
	\node (UR) at (2,2) {$G$};
	\node (M1) at (3,1) {$$};
	
	\node (M2) at (5,1) {$$};
	\node (LL) at (6,0) {$A^*$};
	\node (LR) at (8,0) {$M$};
	\node (UL) at (6,2) {$T^*G$};
	\node (UR) at (8,2) {$G$};

	\tikzset{font=\scriptsize};
	\draw[->] (.4,.07) -- (1.6,.07);
	\draw[->] (.4,-.07) -- (1.6,-.07);
	
	\draw[->] (-.07,1.6) -- (-.07,.4);
	\draw[->] (.07,1.6) -- (.07,.4);
	
	\draw[->] (.4,2.07) -- (1.7,2.07);
	\draw[->] (.4,2-.07) -- (1.7,2-.07);
	
	\draw[->] (2-.07,1.6) -- (2-.07,.4);
	\draw[->] (2+.07,1.6) -- (2+.07,.4);
	
	\draw[->] (6+.4,.07) -- (8-.4,.07);
	\draw[->] (6+.4,-.07) -- (8-.4,-.07);
	
	\draw[->] (6+-.07,1.6) -- (6+-.07,.4);
	\draw[->] (6+.07,1.6) -- (6+.07,.4);
	
	\draw[->] (6+.6,2.07) -- (8-.3,2.07);
	\draw[->] (6+.6,2-.07) -- (8-.3,2-.07);
	
	\draw[->] (8-.07,1.6) -- (8-.07,.4);
	\draw[->] (8+.07,1.6) -- (8+.07,.4);
	
	\draw[->] (M1) to node [above] {$T^*$} (M2);
\end{tikzpicture}
\end{center}
The Lie hopfoid corresponding to the former turns out to be the groupoid $G \rightrightarrows M$ itself, where the structure maps are now viewed as smooth relations, and the symplectic hopfoid corresponding to the latter is $T^*G \rightrightarrows T^*M$, so the result above generalizes the fact that the latter hopfoid is obtained by applying the cotangent functor to the former.

\begin{proof}
First we show that the cotangent lift $T^*(\mathbf{t}_D)$ of the target relation $\mathbf{t}_D$ of $D \rightrightarrows C$ is the target relation of the symplectic hopfoid $T^*D \rightrightarrows T^*C$ constructed from the symplectic double groupoid $T^*D$. Set $X := \wt s_V^{-1}(1^H M)$, let $\pi: X \to C$ be the surjective submersion
\[
a \mapsto a\,\wt\circ_V\,\wt 1^H_{i_V(\wt s_H(a))},
\]
and let $j: X \to D$ be the inclusion. Viewing both of these as relations, the target relation $\mathbf{t}_D: D \to C$ is then the composition
\begin{center}
\begin{tikzpicture}[>=angle 90]
	\node (1) at (0,1) {$D$};
	\node (2) at (2,1) {$X$};
	\node (3) at (4,1) {$C.$};

	\tikzset{font=\scriptsize};
	\draw[->] (1) to node [above] {$j^t$} (2);
	\draw[->] (2) to node [above] {$\pi$} (3);
\end{tikzpicture}
\end{center}
Hence the cotangent lift $T^*(\mathbf{t}_D): T^*D \to T^*C$ is the composition (of reductions)
\begin{center}
\begin{tikzpicture}[>=angle 90]
	\node (1) at (0,1) {$T^*D$};
	\node (2) at (2,1) {$T^*X$};
	\node (3) at (4,1) {$T^*C,$};

	\tikzset{font=\scriptsize};
	\draw[->>] (1) to node [above] {$red$} (2);
	\draw[->>] (2) to node [above] {$red_\pi$} (3);
\end{tikzpicture}
\end{center}
where the first reduction is the one obtained by reducing the coisotropic submanifold $T^*D|_X$ of $T^*D$ and the second is the one obtained by reducing the coisotropic $N^*\F_\pi$ of $T^*X$, where $\F_\pi$ is the foliation on $X$ given by the fibers of $\pi$. Note that the domain of $T^*(\mathbf{t}_D)$ is simply $N^*\F_\pi$.

To show that this composition is the target $\mathbf{t}: T^*D \red T^*C$ of the symplectic hopfoid $T^*D \rightrightarrows T^*C$, we compute the domain of the latter. Recall that the domain of this canonical relation consists of those $(a,\xi) \in T^*D$ which map under the source $\alpha: T^*D \to A^*H$ of the left groupoid structure on $T^*D$ to a unit of the bottom groupoid structure. Identifying $A^*H$ with $N^*H$ in $T^*D$, a calculation using the explicit descriptions of $\alpha$ and of the unit $A^*C \to A^*H$ of the bottom groupoid shows that $\dom\mathbf{t}$ then consists of those $(a,\xi) \in T^*D$ such that $a \in X$ and
\[
(dL_a)_{\wt 1^V 1^H_m}^*\left(\xi|_{\ker(d\wt t_V)_a}\right) \text{ restricted to $V$ is zero},
\]
where $\wt s_V(a) = 1^H_m$, $L_a$ is left multiplication by $a$ in the left groupoid structure on $T^*D$, and by ``restricted to $V$'' we mean restricted to the elements of $\ker(d\wt t_V)_{\wt 1^V1^H_m}$ coming from $TV \subset TD$ under the decomposition
\[
TD|_H = TH \oplus \ker(d\wt t_V)|_H.
\]
Explicitly, such elements are of the form
\[
(d\wt 1^H)_{1^V_m}u - (d\wt 1^V)_{1^H_m}(d\wt t_V)_{\wt 1^H 1^V_m}(d\wt 1^H)_{1^V_m}u \text{ for } u \in TV,
\]
which, using $\wt t_V \circ \wt 1^H = 1^H \circ t_V$, we can write as
\[
(d\wt 1^H)_{1^V_m}\left[u - (d 1^V)_m(d t_V)_{1^V_m}u\right].
\]
The expression in square brackets above gives all of $\ker(d t_V)_{1^Vm}$ when varying $u$, and so finally we find that the condition on $\xi$ is that
\[
\xi \text{ vanishes on } (dL_a)_{\wt 1^V 1^H_m}(d\wt 1^H)_{1^V_m}(\ker(d t_V)_{1^V_m}).
\]
Hence $(a,\xi) \in T^*D$ is in $\dom\mathbf{t}$ if and only if $a \in X$ and $\xi$ satisfies the above condition.

According to the analog of equation~(\ref{eq:tang-dist}) in the general double groupoid case giving a description of the tangent distribution to $\F_\pi$, we see that $(a,\xi) \in T^*D$ is in the domain of $\mathbf{t}$ if and only if $(a,\xi)$ is in $N^*\F_\pi$. Thus the canonical relations $T^*(\mathbf{t}_D)$ and $\mathbf{t}$ have the same domain, and since both are reductions we conclude that $T^*(\mathbf{t}_D) = \mathbf{t}$.

A similar computation shows that the cotangent lifts of the source $\mathbf{s}_D: D \to C$ and unit $\mathbf{e}_D: C \to D$ of $D \rightrightarrows C$ are the source and unit of $T^*D \rightrightarrows T^*C$ respectively. From the definition of the symplectic groupoid structure on the cotangent bundle of a groupoid it follows that the product, coproduct, and antipode of the symplectic hopfoid $T^*D \rightrightarrows T^*C$ are indeed the cotangent lifts of the product, coproduct, and antipode of the Lie hopfoid $D \rightrightarrows C$, and the theorem is proved.
\end{proof}

\begin{ex}\label{ex:dinertia-hopf}
Consider the double groupoid of Example~\ref{ex:dinertia}. The resulting Lie hopfoid structure $G \times G \rightrightarrows G$ is the following. First, the target and source relations $G \times G \to G$ are respectively given by
\[
(g,h) \mapsto gh^{-1} \text{ for $g,h \in G$ such that $r(g)=r(h)$}
\]
and
\[
(g,h) \mapsto h^{-1}g \text{ for $g,h \in G$ such that $\ell(g)=\ell(h)$}.
\]
The unit relation $G \to G \times G$ and antipode $G \to G$ are respectively $g \mapsto \{g\} \times M$ and $(g,h) \mapsto (h^{-1},g^{-1})$. Finally, the product $G \times G \times G \times G \to G \times G$ is
\[
(g,h,a,b) \mapsto (gh,ab) \text{ for composable $g,h$ and composable $a,b$}
\]
and the coproduct $G \times G \to G \times G \times G \times G$ is
\[
(g,h) \mapsto (g,k,k,h) \text{ where $k$ is any element of $G$}.
\]

According to the theorem above, the symplectic hopfoid corresponding to the symplectic double groupoid of Example~\ref{ex:inertia} is the cotangent lift of this Lie hopfoid.\footnote{These relations turn out to  be the structure maps of the so-called \emph{inertia groupoid} of $G \rightrightarrows M$ in disguise.}
\end{ex}

\subsection{From Hopfoids to Double Groupoids}
Suppose that $D \rightrightarrows C$ is a symplectic hopfoid. In this section we show how to recover from this data a symplectic double groupoid structure on $D$.

As a guide to how this procedure works, let us first consider the case where $C = pt$. In this case, the counit $\varepsilon: D \to pt$ of the comonoid structure on $D$ is given by a Lagrangian submanifold $V$ of $D$. Denote by $H$ the Lagrangian submanifold of $D$ given by the unit $\mathbf{e}: pt \to D$. It follows from the calculations below that $(D,\mathbf{m},\mathbf{e})$ is then a strongly positive symplectic $*$-monoid, so that we now have two symplectic groupoid structures on $D$; one with base $V$ and one with base $H$. These two structures satisfy the appropriate compatibility, so that $D$ becomes a symplectic double group.

In general we proceed as follows. First, the given strongly positive $*$-comonoid structure on $D$ gives rise to a symplectic groupoid $D \rightrightarrows V$, where $V$ is the Lagrangian submanifold of $D$ representing the counit $\varepsilon_D: D \to pt$. Keeping with our notation for symplectic double groupoids, we will denote the structure maps of this symplectic groupoid as
\[
\wt t_{H}, \wt s_{H}, \wt 1^{H}, \text{etc.}
\]

Now, denote the image of the composition
\begin{center}
\begin{tikzpicture}[>=angle 90]
	\node (UL) at (0,1) {$pt$};
	\node (UM) at (2,1) {$D$};
	\node (UR) at (4,1) {$C$};
	\node (URR) at (6,1) {$D$};

	\tikzset{font=\scriptsize};
	\draw[->] (UL) to node [above] {$\varepsilon^t$} (UM);
	\draw[->] (UM) to node [above] {$\mathbf{t}$} (UR);
	\draw[->] (UR) to node [above] {$\mathbf{e}$} (URR);
\end{tikzpicture}
\end{center}
by $H$; this is a Lagrangian submanifold of $D$ since the above composition is strongly transversal given that $\mathbf{e}$ is a coreduction. (Note that since $\mathbf{i}$ preserves the counit of $D$ and $\mathbf{s} = \mathbf{t} \circ \mathbf{i}$, it follows that the composition above is the same as $\mathbf{e} \circ \mathbf{s} \circ \varepsilon^t$.) As the notation suggests, this will form the base of the second symplectic groupoid structure on $D$.

\begin{prop}
$(D,\mathbf{m},e)$, where $\mathbf{m}$ is the product of the symplectic hopfoid structure and $e: pt \to D$ is the morphism given by $H$, is a symplectic monoid.
\end{prop}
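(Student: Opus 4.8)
The statement asks for two things, together with strong transversality of all the compositions involved: that $\mathbf{m}$ is associative, and that $e$ is a two-sided unit for $\mathbf{m}$. Associativity of $\mathbf{m}$ is precisely condition (iii) in Definition~\ref{defn:symp-hopf}, and the strong transversality of $\mathbf{m}\circ(\mathbf{m}\times id)$ and $\mathbf{m}\circ(id\times\mathbf{m})$ is built into that condition, so nothing new is needed there. The content of the proposition is therefore the pair of unit identities $\mathbf{m}\circ(e\times id_D)=id_D=\mathbf{m}\circ(id_D\times e)$.

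To prove the left unit identity the plan is to reduce it to condition (vi) of the hopfoid, which gives $\mathbf{m}\circ(\mathbf{e}\times id_D)\circ(\mathbf{t}\times id_D)\circ\Delta=id_D$, i.e. $\mathbf{m}\circ((\mathbf{e}\circ\mathbf{t})\times id_D)\circ\Delta=id_D$. By definition $H$ is the image of $\mathbf{e}\circ\mathbf{t}\circ\varepsilon^t$, so $e=\mathbf{e}\circ\mathbf{t}\circ\varepsilon^t$ as a morphism $pt\to D$, and hence $e\times id_D=((\mathbf{e}\circ\mathbf{t})\times id_D)\circ(\varepsilon^t\times id_D)$. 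Thus the left unit identity is exactly the assertion that $\varepsilon^t\times id_D$ may be substituted for $\Delta$ in the composition above, i.e. $\mathbf{m}\circ((\mathbf{e}\circ\mathbf{t})\times id_D)\circ(\varepsilon^t\times id_D)=id_D$. I would deduce this from the counit law $(\varepsilon\times id_D)\circ\Delta=id_D$ of the strongly positive $*$-comonoid $D$ — which exhibits $\Delta$ as a section of $\varepsilon\times id_D$ — together with coassociativity, the point being that once the first factor is pushed forward through $\mathbf{e}\circ\mathbf{t}$ the two relations $\Delta$ and $\varepsilon^t\times id_D$ have the same effect under $\mathbf{m}$. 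Concretely this means tracing through the reconstructed top symplectic groupoid $D\rightrightarrows V$ — whose product is $\Delta^t$ and whose unit submanifold is $V$ — to identify, for each $a\in D$, the unique $h\in H$ for which $\mathbf{m}(h,a)$ is nonempty, namely the left unit of $a$ in the groupoid $D\rightrightarrows H$ being built, and to check that $\mathbf{m}(h,a)=\{a\}$ for that $h$. The right unit identity $\mathbf{m}\circ(id_D\times e)=id_D$ is obtained by the mirror argument starting from the second diagram in (vi), namely $\mathbf{m}\circ(id_D\times(\mathbf{i}\circ\mathbf{e}))\circ(id_D\times\mathbf{s})\circ\Delta=id_D$; here one first uses (i), (ii), and the fact that $\mathbf{i}$ preserves the counit (so $\mathbf{i}\circ\varepsilon^t=\varepsilon^t$, using $\mathbf{i}^2=id_D$) to rewrite $e$ as $\mathbf{e}\circ\mathbf{s}\circ\varepsilon^t$ and to see that $\mathbf{i}$ fixes the Lagrangian $H$, after which the same substitution step applies.

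Strong transversality of $\mathbf{m}\circ(e\times id_D)$ and $\mathbf{m}\circ(id_D\times e)$ follows from the explicit description obtained along the way — for each $a$ there is a single admissible $h$ — together with the fact that $e$ is a coreduction, being given by a closed Lagrangian submanifold of $D$. The main obstacle is precisely the substitution step in the previous paragraph: the hopfoid's own unit axiom (vi) looks weaker than the monoid unit law, since it involves the full coproduct $\Delta$ rather than the point-like morphism $e$, and showing that it nevertheless forces $\mathbf{m}\circ(e\times id_D)=id_D$ is where the reconstructed groupoid structure $D\rightrightarrows V$ and the strong positivity of the comonoid have to be used to pin down the relevant branch of $\Delta$.
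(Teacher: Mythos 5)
Your proposal follows essentially the same route as the paper: associativity is taken directly from axiom (iii), and the unit laws are extracted from the left/right unit diagrams of the hopfoid by writing $e = \mathbf e \circ \mathbf t \circ \varepsilon^t$ and substituting $\varepsilon^t \times id$ for $\Delta$, justified exactly as in the paper by the reconstructed groupoid $D \rightrightarrows V$ whose product is $\Delta^t$ and whose unit submanifold is the counit Lagrangian $V$ (so $\Delta$ contains, for each $a$, the branch $(a,\wt 1^H_\lambda,a)$ lying in $\varepsilon^t \times id$). The right-unit case via $\mathbf i$ (using that $\mathbf i$ preserves the counit and $\mathbf s = \mathbf t \circ \mathbf i$, so that $\mathbf i$ carries $H$ to itself) is also how the paper disposes of it, albeit with only the remark that it ``follows similarly.''
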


\begin{proof}
By assumption, $\mathbf{m}$ is a associative. The unit properties of $e$ follow from the diagrams in condition (\ref{diag:units}) in the definition of a symplectic hopfoid as follows. The first diagram looks like:
\begin{center}
\begin{tikzpicture}[>=angle 90]
	\node (UL) at (0,1) {$D \times D$};
	\node (UM) at (2.5,1) {$C \times D$};
	\node (UR) at (5,1) {$D \times D$};
	\node (LL) at (0,-1) {$D$};
	\node (LR) at (5,-1) {$D.$};

	\tikzset{font=\scriptsize};
	\draw[->] (UL) to node [above] {$\mathbf{t} \times id$} (UM);
	\draw[->] (UM) to node [above] {$\mathbf{e} \times id$} (UR);
	\draw[->] (LL) to node [left] {$\Delta$} (UL);
	\draw[->] (UR) to node [right] {$\mathbf{m}$} (LR);
	\draw[->] (LL) to node [above] {$id$} (LR);
\end{tikzpicture}
\end{center}
For any $a \in D$ the canonical relation $\Delta$ in particular contains an element of the form $(a,\wt 1^{H}_\lambda,a)$, which follows from the fact that $\Delta^t$ is in fact a groupoid product. This element is in the canonical relation
\[
\varepsilon_D^t \times id: D \to D \times D,
\]
and thus the composition $(\mathbf{t} \times id) \circ \Delta$ contains $(\mathbf{t} \times id) \circ (\varepsilon_D^t \times id) = (\mathbf{t} \circ \varepsilon_D^t) \times id$. Thus the commutativity of the diagram above implies the commutativity of
\begin{center}
\begin{tikzpicture}[>=angle 90]
	\node (UL) at (0,1) {$C \times D$};
	\node (UR) at (5,1) {$D \times D$};
	\node (LL) at (0,-1) {$D$};
	\node (LR) at (5,-1) {$D.$};

	\tikzset{font=\scriptsize};
	\draw[->] (UL) to node [above] {$\mathbf{e} \times id$} (UR);
	\draw[->] (LL) to node [left] {$(\mathbf{t} \circ \varepsilon_D^t) \times id$} (UL);
	\draw[->] (UR) to node [right] {$\mathbf{m}$} (LR);
	\draw[->] (LL) to node [above] {$id$} (LR);
\end{tikzpicture}
\end{center}
Recalling the definition of $e = \mathbf{e} \circ \mathbf{t} \circ \varepsilon_D^t$, this then says that $e$ is a left unit for $\mathbf{m}$. The right unit property of $e$ follows similarly from the second diagram in condition (vii), and we thus conclude that $(D,\mathbf{m},e)$ is a symplectic monoid.
\end{proof}

Now, it is simple to check that $\mathbf{i} \circ \wt i_{H}$ is a $*$-structure on the above monoid using condition~(\ref{diag:antipode}) in the definition of a symplectic hopfoid and the corresponding $*$-properties of the $*$-structure on $(D,\Delta_D,\varepsilon_D)$.

\begin{prop}
The $*$-structure above is strongly positive.
\end{prop}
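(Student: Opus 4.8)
The plan is to verify directly the single diagram whose commutativity is strong positivity, namely
\[
\mathbf{m} \circ (id_D \times s) \circ \delta = e ,
\]
where $s := \mathbf{i} \circ \wt i_H$ is the $*$-structure in question, $\delta : pt \to D \times \overline{D}$ is the diagonal of $D \times \overline{D}$, and $e = \mathbf{e} \circ \mathbf{t} \circ \varepsilon_D^t$ is the unit of $(D,\mathbf{m},e)$ represented by the Lagrangian $H$. Throughout one uses the (reversed) Zakrzewski dictionary: the strongly positive $*$-comonoid structure on $D$ is the symplectic groupoid $D \rightrightarrows V$, whose product $\wt m_H$ is the transpose of the coproduct $\Delta_D$, whose unit submanifold is exactly $V = \varepsilon_D^t(pt)$, and whose inverse is $\wt i_H$; one also uses that $\mathbf{i}$ commutes with $\wt i_H$, so equivalently $s = \wt i_H \circ \mathbf{i}$. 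The idea is to massage the left-hand side above into the left inverse axiom of Definition~\ref{defn:symp-hopf}.

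The crux is the identity
\[
(id_D \times \wt i_H) \circ \delta = \Delta_D \circ \varepsilon_D^t
\]
of canonical relations $pt \to D \times D$. The left-hand side is visibly the submanifold $\{(a,\wt i_H(a)) \mid a \in D\}$. For the right-hand side, since $\Delta_D = \wt m_H^{\,t}$, the composite $\Delta_D \circ \varepsilon_D^t$ is the set of pairs $(a,b) \in D \times D$ with $a \,\wt\circ_H\, b \in V$; because $V$ is precisely the unit submanifold of $D \rightrightarrows V$, the elementary groupoid identity ``$a \,\wt\circ_H\, b$ is a unit if and only if $b = \wt i_H(a)$'' identifies this set with $\{(a,\wt i_H(a))\}$. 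One must in addition check that $\Delta_D \circ \varepsilon_D^t$ is a strongly transversal (indeed clean) composition; this follows from standard facts about symplectic groupoids, or can be seen by the kind of local Lagrangian bisection argument used in Lemma~\ref{lem:fol}. This bookkeeping — unwinding the $*$-comonoid/groupoid correspondence and pinning down transversality for $\Delta_D \circ \varepsilon_D^t$ — is the only genuinely nontrivial point, and is the step I expect to be the main obstacle.

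Granting the crux, the rest is a short computation:
\[
\mathbf{m} \circ (id_D \times s) \circ \delta = \mathbf{m} \circ (id_D \times \mathbf{i}) \circ (id_D \times \wt i_H) \circ \delta = \mathbf{m} \circ (id_D \times \mathbf{i}) \circ \Delta_D \circ \varepsilon_D^t ,
\]
and the left inverse condition~(\ref{diag:inverses}) of Definition~\ref{defn:symp-hopf} gives $\mathbf{m} \circ (id_D \times \mathbf{i}) \circ \Delta_D = \mathbf{e} \circ \mathbf{t}$, so the expression equals $\mathbf{e} \circ \mathbf{t} \circ \varepsilon_D^t = e$, which is exactly the defining composition of $e$. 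To see that the two sides agree as canonical relations and not merely as point sets, one checks that both are strongly transversal compositions: on the right, the compositions inside $\mathbf{m} \circ (id_D \times \mathbf{i}) \circ \Delta_D$ are strongly transversal by the hopfoid axioms, and precomposing with $\varepsilon_D^t$ is harmless since $\mathbf{t}$ is a reduction and $\mathbf{e}$ a coreduction; on the left, $(id_D \times s) \circ \delta$ is strongly transversal because $s$ is a symplectomorphism, after which the outer composition with $\mathbf{m}$ is the one already controlled via the crux identity. (The right inverse axiom yields the same conclusion symmetrically, so either half of condition~(\ref{diag:inverses}) suffices.) Once the crux identity is in place, the proposition therefore drops out immediately.
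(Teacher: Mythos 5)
Your proposal is correct and follows essentially the same route as the paper: both reduce strong positivity of $s = \mathbf{i} \circ \wt i_H$ to the left-inverse axiom precomposed with $\varepsilon_D^t$, via the key identity that $\Delta_D \circ \varepsilon_D^t$ is the graph of $\wt i_H$ (equivalently, that composing with $id \times \wt i_H$ turns it into the diagonal of $D \times \overline{D}$). The only difference is cosmetic: the paper starts from the inverse diagram and refactors it into the strong positivity diagram, while you run the same computation in the reverse direction and spell out the ``simple check'' the paper leaves implicit.
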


\begin{proof}
Consider the first diagram in condition~(\ref{diag:inverses}) in the definition of symplectic hopfoid. It is a simple check to see that the following then commutes:
\begin{center}
\begin{tikzpicture}[>=angle 90]
	\node (P) at (-2,-2) {$pt$};
	\node (UL) at (0,1) {$D \times D$};
	\node (UR) at (5,1) {$D \times D$};
	\node (LL) at (0,-1) {$D$};
	\node (LM) at (2.5,-1) {$C$};
	\node (LR) at (5,-1) {$D$};

	\tikzset{font=\scriptsize};
	\draw[->] (P) to node [above] {$\varepsilon_D^t$} (LL);
	\draw[->] (P) to [bend left=15] node [left] {$\wt i_{H}$} (UL);
	\draw[->] (P) to [bend right=15] node [above] {$$} (LM);
	\draw[->] (UL) to node [above] {$id \times \mathbf{i}$} (UR);
	\draw[->] (LL) to node [left] {$\Delta$} (UL);
	\draw[->] (UR) to node [right] {$\mathbf{m}$} (LR);
	\draw[->] (LL) to node [above] {$\mathbf{t}$} (LM);
	\draw[->] (LM) to node [above] {$\mathbf{e}$} (LR);
	\draw[->] (P) to [bend right=15] node [below] {$e$} (LR);
\end{tikzpicture}
\end{center}
where $\wt i_{H}: \overline{D} \to D$ is now thought of as a morphism $pt \to D \times D$ via its graph. Now, we can factor the top row as
\begin{center}
\begin{tikzpicture}[>=angle 90]
	\node (UL) at (0,1) {$D \times D$};
	\node (UM) at (3,1) {$D \times \overline{D}$};
	\node (UR) at (7,1) {$D \times D$};

	\tikzset{font=\scriptsize};
	\draw[->] (UL) to node [above] {$id \times \wt i_{H}$} (UM);
	\draw[->] (UM) to node [above] {$id \times (\mathbf{i} \circ \wt i_{H})$} (UR);
\end{tikzpicture}
\end{center}
using the fact that $\wt i_{H} \circ \wt i_{H} = id$. Then the composition
\begin{center}
\begin{tikzpicture}[>=angle 90]
	\node (UL) at (0,1) {$pt$};
	\node (UM) at (3,1) {$D \times D$};
	\node (UR) at (6,1) {$D \times \overline{D}$};

	\tikzset{font=\scriptsize};
	\draw[->] (UL) to node [above] {$\wt i_{H}$} (UM);
	\draw[->] (UM) to node [above] {$id \times \wt i_{H}$} (UR);
\end{tikzpicture}
\end{center}
is precisely the morphism $pt \to D \times \overline{D}$ given by the diagonal of $D \times \overline{D}$. Thus the diagram above becomes
\begin{center}
\begin{tikzpicture}[>=angle 90]
	\node (UL) at (0,1) {$D \times \overline{D}$};
	\node (UR) at (4,1) {$D \times D$};
	\node (LL) at (0,-1) {$pt$};
	\node (LR) at (4,-1) {$D$};

	\tikzset{font=\scriptsize};
	\draw[->] (UL) to node [above] {$id \times (\mathbf{i} \circ \wt i_{H})$} (UR);
	\draw[->] (LL) to node [left] {$$} (UL);
	\draw[->] (UR) to node [right] {$\mathbf{m}$} (LR);
	\draw[->] (LL) to node [above] {$e$} (LR);
\end{tikzpicture}
\end{center}
which is precisely the strong positivity requirement on $(D,\mathbf{m},e)$.
\end{proof}

We thus conclude that the monoid $(D,\mathbf{m},e)$ produces an additional symplectic groupoid structure on $D$. Restricting the groupoid structure on $D \rightrightarrows V$ to $H$ produces a groupoid structure on $H \rightrightarrows M$, and restricting the groupoid structure on $D \rightrightarrows H$ to $V$ gives a groupoid structure $V \rightrightarrows M$.

These give the four groupoid structures in the definition of a symplectic double groupoid, and it is straightforward to check that the various compatibilities hold; in particular, condition~(\ref{diag:product}) defining a symplectic hopfoid precisely says that
\[
\mathbf{m}: D \times D \to D
\]
is a groupoid morphism for the groupoid product $\Delta^t$ on $D$ and vice-versa. We thus have:

\begin{thrm}\label{thrm:main-cor}
There is a $1$-$1$ correspondence between symplectic double groupoids and symplectic hopfoids.
\end{thrm}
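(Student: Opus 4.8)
The statement asserts that the two constructions of the preceding two subsections---the ``hopf'' map sending a symplectic double groupoid $D$ to the symplectic hopfoid $D \rightrightarrows C$, and the reconstruction sending a symplectic hopfoid back to a symplectic double groupoid---are mutually inverse. The plan is to verify directly that each composite is the identity, checking the two directions separately.

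\textbf{Double groupoid $\to$ hopfoid $\to$ double groupoid.} Let $D$ be a symplectic double groupoid with top, bottom, left and right groupoids and core $C$. By the definition of the hopf map, the strongly positive $*$-comonoid attached to $D$ is $(D,(\wt m_{H})^{t},\varepsilon_D,\wt i_{H})$, which is precisely the $*$-comonoid Zakrzewski's theorem associates to the top symplectic groupoid $D \rightrightarrows V$, so reconstructing a groupoid from it returns the top groupoid, with base $V$ the counit Lagrangian. The hopfoid's product is $\mathbf{m}=\wt m_V$, the left-groupoid product, its antipode is $\mathbf{i}=\wt i_V\circ\wt i_{H}$, and the $*$-structure $\mathbf{i}\circ\wt i_{H}=\wt i_V$ used in the reconstruction is the left inverse; the one genuine computation is that the base Lagrangian $e$, the image of $\mathbf{e}\circ\mathbf{t}\circ\varepsilon_D^{t}$, equals the unit Lagrangian $\wt 1^V(H)$ of the left groupoid. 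For this one checks via square-diagram identities that $\varepsilon_D^{t}=\wt 1^{H}(V)$, that $\mathbf{t}$ maps this onto the unit section $\{\wt 1^{H}1^V_m=\wt 1^V 1^{H}_m:m\in M\}$ of the core groupoid, and that $\mathbf{e}$ maps that onto $\{\wt 1^V_\lambda:\lambda\in H\}=\wt 1^V(H)$. Hence reconstructing a groupoid from the $*$-monoid $(D,\mathbf{m},e,\wt i_V)$ returns the left groupoid $D \rightrightarrows H$; restricting the recovered top and left groupoids to $H$ and $V$ then returns the bottom groupoid $H \rightrightarrows M$ and the right groupoid $V \rightrightarrows M$, so all four structures of $D$ are recovered.

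\textbf{Hopfoid $\to$ double groupoid $\to$ hopfoid.} Start with a symplectic hopfoid $D \rightrightarrows C$, build the double groupoid $\wt D$, and form the hopfoid $\mathrm{hopf}(\wt D)$. Its $*$-comonoid is the transpose of the top-groupoid data of $\wt D$; since the top groupoid of $\wt D$ is by construction the Zakrzewski groupoid of the original $*$-comonoid on $D$, this returns that original comonoid. Its product is the left-groupoid product of $\wt D$, i.e.\ the original $\mathbf{m}$, and its antipode is $\wt i_V\circ\wt i_{H}=(\mathbf{i}\circ\wt i_{H})\circ\wt i_{H}=\mathbf{i}$. It remains to check that the target, source and unit of $\mathrm{hopf}(\wt D)$ are the original $\mathbf{t}$, $\mathbf{s}$, $\mathbf{e}$; since $C=\im\mathbf{t}$, this also identifies the core of $\wt D$, as a submanifold of $D$, with $C$. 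For this one must show that the coisotropic submanifolds reduced in the hopf construction applied to $\wt D$---the preimages of the unit submanifolds under the reconstructed sources and targets $\wt s_{H},\wt s_V,\wt t_V$---are exactly $\im\mathbf{e}=\dom\mathbf{e}^{t}$, $\dom\mathbf{t}$, and the domain of the reduction whose post-composition with $\wt i_V\circ\wt i_{H}$ is $\mathbf{s}$, and that Lemma~\ref{lem:fol} applied to $\wt D$ identifies their characteristic foliations with the fibers of $\mathbf{e}^{t}$, $\mathbf{t}$ and $\mathbf{s}$ respectively. This uses the hopfoid axioms (i)--(ii) together with the explicit description of the source and target of a Zakrzewski groupoid in terms of its product and strongly positive $*$-structure. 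Granting these identifications, every diagram of Definition~\ref{defn:symp-hopf} corresponds to a compatibility of $\wt D$ and conversely, so the two hopfoids coincide.

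\textbf{The main obstacle.} The bookkeeping in the first composite, and the comonoid/product/antipode parts of the second, are routine manipulations of the square-diagram identities. The crux is the claim in the second composite that the reconstructed maps $\wt s_V,\wt t_V$ of $\wt D$---produced only abstractly by Zakrzewski's correspondence from the monoid $(D,\mathbf{m},\mathbf{i}\circ\wt i_{H})$, and similarly $\wt s_{H}$ from the given $*$-comonoid---are concrete enough that their preimages of unit submanifolds are exactly the coisotropic domains $\dom\mathbf{t}$, $\dom\mathbf{s}$, $\im\mathbf{e}$, with characteristic foliations (computed via Lemma~\ref{lem:fol}) equal to the fibers of $\mathbf{t}$, $\mathbf{s}$, $\mathbf{e}^{t}$. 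Establishing this requires unwinding Zakrzewski's reconstruction explicitly; once it is in hand, the identification of the core of $\wt D$ with $C$ and the matching of all remaining structure is immediate.
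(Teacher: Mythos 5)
Your proposal takes essentially the same route as the paper: the paper's own proof of this theorem is a one-line remark that it only remains to check the two constructions (double groupoid $\to$ hopfoid and hopfoid $\to$ double groupoid) are mutually inverse, this being a ``straightforward verification,'' with all substantive work residing in the preceding sections. Your two-composite outline---including the correct computation that $\mathbf{e}\circ\mathbf{t}\circ\varepsilon_D^t$ has image $\wt 1^V(H)$, and the identification of where the real work lies in recovering $\mathbf{t},\mathbf{s},\mathbf{e}$ from the reconstructed double groupoid---is consistent with the paper's constructions and in fact records more detail than the paper itself does.
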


\begin{proof}
It remains only to check that the operations of producing a symplectic hopfoid from a symplectic double groupoid and conversely producing a symplectic double groupoid from a symplectic hopfoid are inverse to one another. This is a straightforward verification.
\end{proof}

The theorem above has an obvious generalization to the general smooth setting, giving a $1$-$1$ correspondence between arbitrary double Lie groupoids and Lie hopfoids.

\subsection{Morphisms}
Our results on the correspondence between double groupoids and hopfoids are only concerned with these structures at the level of objects. We finish by briefly and speculatively considering how to bring morphisms into the picture.

There is a natural notion of morphism between symplectic hopfoids:
\begin{defn}
Suppose $D \rightrightarrows C$ and $D' \rightrightarrows C'$ are symplectic hopfoids. A \emph{morhpism} $D \to D'$ is a pair $(L,L_c)$ of canonical relations $L: D \to D'$ and $L_c: C \to C'$ such that $L$ is a comonoid morphism for the comonoid structures on $D$ and $D'$ and which are compatible with the structure morphisms of $D$ and $D'$ in the appropriate sense. For instance, the compatibility between $(L,L_c)$ and the target morphisms of $D$ and $D'$, and that between $L$ and the product morphisms of $D$ and $D'$, are expressed by the commutativity of:
\begin{center}
\begin{tikzpicture}[>=angle 90]
	\node (UL) at (0,1) {$D$};
	\node (UR) at (3,1) {$D'$};
	\node (LL) at (0,-1) {$C$};
	\node (LR) at (3,-1) {$C'$};
	
	\node (M) at (4.5,0) {and};
	
	\node (UL2) at (6,1) {$D\times D$};
	\node (UR2) at (9,1) {$D' \times D'$};
	\node (LL2) at (6,-1) {$D$};
	\node (LR2) at (9,-1) {$D'$};

	\tikzset{font=\scriptsize};
	\draw[->] (UL) to node [above] {$L$} (UR);
	\draw[->] (UL) to node [left] {$\mathbf t$} (LL);
	\draw[->] (UR) to node [right] {$\mathbf t'$} (LR);
	\draw[->] (LL) to node [above] {$L_c$} (LR);
	
	\draw[->] (UL2) to node [above] {$L \times L$} (UR2);
	\draw[->] (UL2) to node [left] {$\mathbf m$} (LL2);
	\draw[->] (UR2) to node [right] {$\mathbf m'$} (LR2);
	\draw[->] (LL2) to node [above] {$L$} (LR2);
\end{tikzpicture}
\end{center}
respectively. We omit writing out all explicit compatibility diagrams here, but note that all compositions arising in such diagrams should be assumed to be strongly transversal.
\end{defn}

In the case where $C = pt$, so that symplectic hopfoids are simply Hopf algebra objects in $\Symp$, this definition reduces to that of a morphism between Hopf algebra objects as given in Definition~\ref{defn:hopf-morphism}, which suggests this is a good notion of morphism to consider.

As for morphisms between symplectic double groupoids, we consider the following. Given two double Lie groupoids $D$ and $D'$, one can take a morphism $f: D \to D'$ to be simply a smooth map which is a homomorphism for both the horizontal and vertical groupoid structures on $D$ and $D'$. The cotangent lift of this map gives a canonical relation $T^*f: T^*D \to T^*D'$ which, viewed as a subset of $\overline{T^*D} \times T^*D'$, is a Lagrangian subgroupoid of both symplectic groupoid structures on the product $\overline{T^*D} \times T^*D'$ induced by the horizontal and vertical structures in Theorem \ref{thrm:ctdbl}. More generally, we can take a morphism $D \to D'$ to be a submanifold of $D \times D'$ which is a Lie subgroupoid for both product structures induced by the horizontal and vertical groupoid structures on $D$ and $D'$. The cotangent lift of such a subgroupoid again gives a Lagrangian subgroupoid of $\overline{T^*D } \times T^*D'$ for both product structures, which thus suggests the following definition:

\begin{defn}
Suppose $D$ and $D'$ are symplectic double groupoids. A \emph{morphism} $D \to D'$ is a canonical relation $L: D \to D'$ which is a Lagrangian subgroupoid of $\overline D \times D'$ for each of the product structures induced by the horizontal and vertical groupoid structures on $D$ and $D'$.
\end{defn}

Given such a canonical relation $L: D \to D'$, the reduction procedure of Section~\ref{sec:reduc} will, under certain transversality assumptions, produce a canonical relation $L_c: C \to C'$ (where we view the cores $C$ and $C'$ of $D$ and $D'$ respectively as appropriate leaf spaces) via standard methods of producing Lagrangian submanifolds of symplectic quotients. To be precise: $L_c$, as a Lagrangian submanifold of $\overline C \times C'$, is defined to be the result of the composition
\begin{center}
\begin{tikzpicture}[>=angle 90]
	\node (UL) at (0,1) {$pt$};
	\node (UM) at (3,1) {$\overline D \times D'$};
	\node (UR) at (6,1) {$\overline C \times C'$};

	\tikzset{font=\scriptsize};
	\draw[->] (UL) to node [above] {$L$} (UM);
	\draw[->] (UM) to node [above] {$red \times red$} (UR);
\end{tikzpicture}
\end{center}
in the case where this composition is strongly transversal. We thus obtain a pair $(L,L_c)$ of canonical relations $L: D \to D'$ and $L_c: C \to C'$, and we conjecture that these do form the data of a morphism between the symplectic hopfoids $D \rightrightarrows C$ and $D' \rightrightarrows C'$. For instance, if we take the reductions $red$ in the composition above to be the target morphisms $\mathbf t$ and $\mathbf t'$, this definition of $L_c$ makes the fact that $(L,L_c)$ is compatible with the target morphisms of $D$ and $D'$ immediate. Going the other way, it should be possible to recover a morphism $D \to D'$ between double groupoids given a morphism between hopfoids, so that Theorem~\ref{thrm:main-cor} holds at the level of morphisms as well.

The verification of all these details, which involves checking the commutativity of various diagrams, is worthy of its own work and so will not be carried out here. Once this has been established, the result that the commutativity of
\begin{center}
\begin{tikzpicture}[>=angle 90]
	\node (UL) at (0,1) {double Lie groupoids};
	\node (UR) at (6,1) {symplectic double groupoids};
	\node (LL) at (0,-1) {Lie hopfoids};
	\node (LR) at (6,-1) {symplectic hopfoids};

	\tikzset{font=\scriptsize};
	\draw[->] (UL) to node [above] {$T^*$} (UR);
	\draw[->] (UL) to node [left] {hopf} (LL);
	\draw[->] (UR) to node [right] {hopf} (LR);
	\draw[->] (LL) to node [above] {$T^*$} (LR);
\end{tikzpicture}
\end{center}
also holds at the level of morphisms, where we define morphisms between double Lie groupoids as Lie subgroupoids of products, should be a consequence of the functorial properties of $T^*$. Another interesting possibility would be to take ``symplectic bibundles'' as morphisms between symplectic double groupoids, but it is not clear how this notion should be translated over to hopfoids.

\end{document}